\newcommand{\N}{{\mathds{N}}}
\newcommand{\Z}{{\mathds{Z}}}
\newcommand{\R}{{\mathds{R}}}
\newcommand{\C}{{\mathds{C}}}
\newcommand{\T}{{\mathds{T}}}
\newcommand{\U}{{\mathds{U}}}
\newcommand{\D}{{\mathfrak{D}}}
\newcommand{\A}{{\mathfrak{A}}}
\newcommand{\B}{{\mathfrak{B}}}
\newcommand{\Nbar}{\overline{\N}_\ast}
\newcommand{\Lip}{{\mathsf{L}}}
\newcommand{\Hilbert}{{\mathscr{H}}}
\newcommand{\Diag}[2]{{\operatorname*{Diag}\left[{#1}\middle| {#2} \right]}}
\newcommand{\propinquity}{{\mathsf{\Lambda}}}
\newcommand{\Kantorovich}[1]{{\mathsf{mk}_{#1}}}
\newcommand{\Haus}[1]{{\mathsf{Haus}_{#1}}}
\newcommand{\StateSpace}{{\mathscr{S}}}
\newcommand{\mongekant}{{Mon\-ge-Kan\-to\-ro\-vich metric}}
\newcommand{\Lqcms}{{\JLL} quantum compact metric space}
\newcommand{\unit}{1}
\newcommand{\sa}[1]{{\mathfrak{sa}({#1})}}
\newcommand{\Adm}{{\mathrm{Adm}}}
\newcommand{\inner}[2]{{\left<{#1},{#2}\right>}}
\newcommand{\JLL}{Lei\-bniz}
\newcommand{\tr}{{\operatorname*{tr}}}
\newcommand{\Mod}[2]{{{\raisebox{.2em}{$#1$}\left/\raisebox{-.2em}{$#2$}\right.}}}
\newcommand{\cyclic}[2]{{ \left[{#1} \mod I_{#2} \right]  }}
\newcommand{\cycle}[2]{{ \left[{#1} \middle| I_{#2} \right]  }}
\newcommand{\dom}[1]{{\operatorname*{dom}({#1})}}
\newcommand{\diam}[2]{{\mathrm{diam}\left({#1},{#2}\right)}}
\newcommand{\bridgeset}[2]{{\text{\calligra Bridges}\left({#1}\rightarrow{#2}\right)}}
\newcommand{\bridgereach}[2]{{\varrho\left({#1}\middle|{#2}\right)}}
\newcommand{\bridgeheight}[2]{{\varsigma\left({#1}\middle|{#2}\right)}}
\newcommand{\bridgelength}[2]{{\lambda\left({#1}\middle|{#2}\right)}}
\newcommand{\treklength}[1]{{\lambda\left({#1}\right)}}
\newcommand{\bridgenorm}[2]{{\mathsf{bn}_{ {#1}  }\left({#2}\right)}}
\newcommand{\Jordan}[2]{{{#1}\circ{#2}}} 
\newcommand{\Lie}[2]{{\left\{{#1},{#2}\right\}}} 
\newcommand{\LCQMS}{{\mathcal{L}^\ast}}
\theoremstyle{plain}
\newtheorem{theorem}{Theorem}[subsection]
\newtheorem{claim}[theorem]{Claim}
\newtheorem{corollary}[theorem]{Corollary}
\newtheorem{lemma}[theorem]{Lemma}
\newtheorem{theorem-definition}[theorem]{Theorem-Definition}
\theoremstyle{definition}
\newtheorem{definition}[theorem]{Definition}
\newtheorem{notation}[theorem]{Notation}
\theoremstyle{remark}
\newtheorem{remark}[theorem]{Remark}
\renewcommand{\geq}{\geqslant}
\renewcommand{\leq}{\leqslant}
\newcommand{\conv}[1]{{*_{#1}}} 
\numberwithin{equation}{subsection}
\begin{document}


\title[Quantum Tori and the Quantum Gromov-Haus\-dorff Propinquity]{Convergence of Fuzzy Tori and Quantum Tori for the quantum Gromov-Hausdorff Propinquity: an explicit approach}
\author{Fr\'{e}d\'{e}ric Latr\'{e}moli\`{e}re}
\email{frederic@math.du.edu}
\urladdr{http://www.math.du.edu/\symbol{126}frederic}
\address{Department of Mathematics \\ University of Denver \\ Denver CO 80208}

\date{\today}
\subjclass[2000]{Primary:  46L89, 46L30, 58B34.}
\keywords{Noncommutative metric geometry, quantum Gromov-Haus\-dorff distance, Monge-Kantorovich distance, Quantum Metric Spaces, Lip-norms, compact C*-metric spaces, Leibniz seminorms, Quantum Tori, Finite dimensional approximations.}

\begin{abstract}
Quantum tori are limits of finite dimensional C*-algebras for the quantum Gromov-Hausdorff propinquity, a metric defined by the author as a strengthening of Rieffel's quantum Gromov-Hausdorff designed to retain the C*-algebraic structure. In this paper, we propose a proof of the continuity of the family of quantum and fuzzy tori which relies on explicit representations of the C*-algebras rather than on more abstract arguments, in a manner which takes full advantage of the notion of bridge defining the quantum propinquity.
\end{abstract}

\maketitle



\newcommand{\physics}[1]{#1}



\section{Introduction}

The quantum Gromov-Hausdorff propinquity and the dual Gromov-Hausdorff propinquity, introduced by the author in \cite{Latremoliere13} and \cite{Latremoliere13b}, are metrics on the class of {\Lqcms s} up to isometric isomorphism, designed with the goal of providing a tool to work within the C*-algebraic framework in noncommutative metric geometry \cite{Connes89,Connes,Rieffel98a,Rieffel99,Rieffel00,Rieffel05}. Our metrics address important issues raised by recent work in noncommutative metric geometry \cite{Rieffel09,Rieffel10,Rieffel10c,Rieffel11,Rieffel12}, where the C*-algebraic structure and the Leibniz property occupy an ever larger role, yet did not fit nicely within the framework of the quantum Gromov-Hausdorff distance \cite{Rieffel00}. As our new metrics are stronger than the quantum Gromov-Haus\-dorff distance, a question of central interest is to prove that quantum tori are still limits of fuzzy tori in the spirit of \cite{Latremoliere05}, and still form a continuous family for our new metrics.

A positive answer to these questions can be gleaned from the work of Kerr and Li in \cite{kerr09}, where the unital nuclear distance, which dominates our propinquity metrics, can be used together with our earlier work \cite{Latremoliere05} on finite dimensional approximations of quantum tori, to obtain that quantum and fuzzy tori form a continuous family for the quantum and the dual Gromov-Hausdorff propinquity \cite[Theorem 6.8]{Latremoliere13}. However, the very powerful tool of subtrivialization of fields of C*-algebras, developed by Haagerup and R{\o}rdam \cite{Rordam95} and Blanchard \cite{Blanchard97}, is employed in \cite{li03,kerr09}, which relies on somewhat abstract constructions. Thus, it is difficult to provide an explicit admissible Leibniz Lip-norm, in the sense of \cite{Rieffel00}, from such a construction.

It would be very interesting to obtain explicit Leibniz Lip-norms to derive the continuity of the quantum tori for our quantum Gromov-Hausdorff propinquity \cite{Latremoliere13}, for a variety of reasons. First, the quantum tori and their finite dimensional counter parts are well-studied and understood, and one would like to provide a more concrete construction of convergence for such a central and accessible example. Second, the original physical motivation behind the search for finite dimensional approximations of quantum tori \cite{Connes97,Seiberg99,tHooft02,Zumino98} would benefit from more explicit constructions which may provide useful quantitative, rather than qualitative methods for convergence. Third, future explorations of the continuity of various C*-algebraic structures may well be easier if one can work with such natural objects as the left regular representations, trace class operators and explicit Lip-norms rather than more implicit constructions. Such explorations are in large part behind the reason for introducing the quantum and dual Gromov-Hausdorff propinquity as well-behaved substitutes for the quantum proximity of Rieffel \cite{Rieffel10c}, which was in turn motivated by the study convergence of projective modules associated to convergent sequences of {\Lqcms s} for the quantum Gromov-Hausdorff distance \cite{Rieffel09}. Last, our method may provide a path to obtain quantitative results about Gromov-Hausdorff convergence for other continuous field of C*-algebras, even when a more generic argument may provide an abstract convergence result. We were thus curious as to whether one can explicit natural Leibniz Lip-norms in the convergence of the quantum tori in the spirit of \cite{Rieffel09,Rieffel10c,Rieffel11}, and this paper provides the result of our investigation.

We propose a proof of the convergence of fuzzy tori to quantum tori for the quantum Gromov-Hausdorff propinquity where the bridges are given explicitly, in terms of the left-regular representations of the various twisted C*-algebras involved and an explicit choice of a pivot. Our proof thus does not rely on subtrivialization arguments, and may thus be more amenable to quantitative considerations. Our purpose, in particular, is the construction of Leibniz Lip-norms to replace the non-Leibniz Lip-norms of \cite{Rieffel00, Latremoliere05}, in a descriptive manner, rather than just proving the existence of such Lip-norms without a grasp of their actual form. Thus, our proof provides quantitative, rather than qualitative, tools to work with convergence of quantum tori and fuzzy tori.

Our paper begins with a brief survey of the quantum Gromov-Hausdorff propinquity, which is the metric we will use in this paper. Since this metric dominates our dual Gromov-Hausdorff propinquity, our result in this paper also applies to the later. We refer to \cite{Latremoliere13,Latremoliere13b} for a more comprehensive exposition of the propinquity metrics, including motivations are relations to other metric defined on classes of quantum compact metric spaces. We then survey results which we shall need regarding the quantum tori, with a focus on the noncommutative metric aspects. We further perform various computations using the left regular representations of the quantum and fuzzy tori, as a preliminary step for the last section, where we construct our explicit bridges for the quantum propinquity, including explicit Leibniz Lip-norms, and use them to prove the convergence of fuzzy tori to quantum tori.

\section{Quantum Gromov-Hausdorff Propinquity}

As an informal motivation for our work and the introduction of the quantum Gromov-Hausdorff propinquity, we begin this section with the problem which this paper solves. For all $n\in\N$, let us be given a complex numbers $\rho_n$ such that $\rho_n^n = 1$, and let us define the two $n\times n$ unitary matrices:
\begin{equation*}
U_n = \begin{pmatrix}
0 & \hdots &        & & &1 \\
1 & 0      & \hdots & & &  \\
\ddots & \ddots &   & & &  \\
  &        &        & &1& 0\\ 
\end{pmatrix}
\text{ and }
V_n = \begin{pmatrix}
1 &      & & & &\\
  & \rho_n & & & &\\
  & & \rho_n^2 & & &\\
  & & & \ddots & & \\
  & & & & & \rho_n^{n-1}
\end{pmatrix}\text{.}
\end{equation*}
By construction, $U_nV_n = \rho_n V_n U_n$. Such pair of matrices appear in the literature in mathematical physics as well as quantum information theory, among others. The C*-algebras $C^\ast(U_n,V_n)$ are sometimes called fuzzy tori. Often, a desirable outcome of some computations carried over fuzzy tori is that one can obtain interesting results when $n$ goes to infinity under the condition that the sequence $(\rho_n)_{n\in\N}$ converges --- examples of such situations are found in the mathematical physics literature, for instance \cite{Connes97,Seiberg99,tHooft02,Zumino98}, to cite but a few. Informally, one would expect that the limit of the fuzzy tori $C^\ast(U_n,V_n)$ would be the universal C*-algebra  $C^\ast(U,V)$ generated by two unitaries $U$ and $V$ subject to the relation $UV = \rho VU$ where $\rho = \lim_{n\rightarrow\infty} \rho_n$, i.e. a quantum torus. Yet, as quantum tori are not AF --- for instance, their $K_1$ groups are nontrivial --- making sense of such a limiting process is challenging. Rieffel's quantum Gromov-Hausdorff distance \cite{Rieffel00} provides a first framework in which such a limit can be justified \cite{Latremoliere05}. However, this distance may be null between *-isomorphic C*-algebras: in other words, it does not capture the C*-algebraic structure fully. We introduced in \cite{Latremoliere13} a metric which does capture the C*-algebraic structure, called the quantum propinquity, for which we propose to study the convergence of fuzzy tori to quantum tori by providing an explicit construction, more amenable to computational methods, rather than relying on a more abstract result \cite{kerr09}. The motivation to seek an explicit proof of convergence for such a metric is that, as the field of noncommutative metric geometry evolves, one is interested in studying consequences of convergence for various C*-algebra related structures, such as modules \cite{Rieffel10}, as is done in the physics literature outside of any formal construction. For such computations, it becomes desirable to work within the class of C*-algebras (rather than order unit spaces) and use well-understood constructs, such as the left regular representations, rather than more complicated abstractions. We propose in this section to summarize the construction of the quantum propinquity, and then provide such an explicit proof of convergence in the remainder of the paper.

\subsection{Quantum Gromov-Hausdorff Distance}

Noncommutative metric geometry \cite{Connes89,Connes,Rieffel98a,Rieffel99,Rieffel00} proposes to study noncommutative generalizations of Lipschitz algebras \cite{Weaver99}, defined as follows.

\begin{notation}
Let $\A$ be a unital C*-algebra. The norm of $\A$ is denoted by $\|\cdot\|_\A$ and the unit of $\A$ is denoted by $\unit_\A$, while the state space of $\A$ is denoted by $\StateSpace(\A)$ and the space of self-adjoint elements in $\A$ is denoted by $\sa{\A}$.
\end{notation}

\begin{definition}\label{quantum-compact-metric-space-def}
A \emph{quantum compact metric space} $(\A,\Lip)$ is a pair of a unital C*-algebra $\A$, with unit $\unit_\A$, and a seminorm $\Lip$ defined on a dense subset $\dom{\Lip}$ of the self-adjoint part $\sa{\A}$ of $\A$, such that:
\begin{enumerate}
\item $\{a\in\dom{\Lip} : \Lip(a) = 0 \} = \R \unit_\A$,
\item the metric defined on the state space $\StateSpace(\A)$ of $\A$ by setting for all $\varphi,\psi\in\StateSpace(\A)$:
\begin{equation*}
\Kantorovich{\Lip}(\varphi,\psi) = \sup \left\{ |\varphi(a)-\psi(a)| : a\in\dom{\Lip}\text{ and }\Lip(a)\leq 1 \right\}
\end{equation*}
metrizes the weak* topology on $\StateSpace(\A)$.
\end{enumerate}
The seminorm $\Lip$ is then called a \emph{Lip-norm}, while $\Kantorovich{\Lip}$ is the \emph{\mongekant} associated with $\Lip$.
\end{definition}

As a matter of convention, we extend seminorms on dense subsets of a vector space by setting them to the value $\infty$ outside of their domain. We adopt this practice implicitly in the rest of this paper.

The fundamental examples of quantum compact metric spaces are given by pairs $\left(C(X),\mathsf{Lip}_{\mathsf{m}}\right)$, where $X$ is a compact metric space, $\mathsf{m}$ is a continuous metric on $X$ and, for any $f\in C(X)$, we define:
\begin{equation}\label{Lip-eq}
\mathsf{Lip}_{\mathsf{m}}(f) = \sup\left\{\frac{|f(x)-f(y)|}{\mathsf{m}(x,y)} : x,y\in X,x\not=y \right\}\text{.}
\end{equation}
The restriction of $\mathsf{Lip}_{\mathsf{m}}$ to the space $\sa{C(X)}$ of real-valued continuous functions on $X$ is a Lip-norm. 

Another class of fundamental examples is given by unital C*-algebras on which a compact group, endowed with a continuous length function, acts ergodically \cite{Rieffel98a}; the particular case of quantum and fuzzy tori will be detailed in the next section.

We propose a generalization of the notion of a quantum compact metric space to quantum locally compact metric spaces in \cite{Latremoliere12b}, following on our earlier work in \cite{Latremoliere05b}.

Rieffel introduced in \cite{Rieffel00} a first notion of convergence for quantum compact metric spaces: the quantum Gromov-Hausdorff distance, which is a noncommutative analogue of Gromov's distance between compact metric spaces. A motivation for this metric is to provide a framework for various approximations of C*-algebras found in the mathematical literature (e.g. \cite{Connes97}), as well as a new tool for the study of the metric aspects of noncommutative geometry. 

The construction of the quantum Gro\-mov-Haus\-dorff distance proceeds as follows: given two quantum compact metric spaces $(\A,\Lip_\A)$ and $(\B,\Lip_\B)$, we consider the set $\Adm(\Lip_\A,\Lip_\B)$ of all Lip-norms on $\A\oplus\B$ whose quotients to $\A$ and $\B$ are, respectively, $\Lip_\A$ and $\Lip_\B$. For any such admissible Lip-norm $\Lip$, one may consider the Haus\-dorff distance between $\StateSpace(\A)$ and $\StateSpace(\B)$ identified with their isometric copies in $\StateSpace(\A\oplus\B)$, where the state spaces $\StateSpace(\A)$, $\StateSpace(\B)$ and $\StateSpace(\A\oplus\B)$ are equipped with the {\mongekant} associated to, respectively, $\Lip_\A$, $\Lip_\B$ and $\Lip$. The infimum of these Haus\-dorff distances over all possible choices of $\Lip \in \Adm(\Lip_\A,\Lip_\B)$ is the quantum Gro\-mov-Haus\-dorff distance between $(\A,\Lip_\A)$ and $(\B,\Lip_\B)$.

The notion of admissibility can be understood, in the classical picture, as the statement that a real-valued Lipschitz function $f$ on a compact metric space $X$ can be extended to a real-valued Lipschitz function $\tilde{f}$ with the same Lipschitz seminorm on the disjoint union $X\coprod Y$, where $Y$ is some other compact space and $X\coprod Y$ is metrized by an admissible metric, i.e. a metric such that the canonical embeddings of $X$ and $Y$ in $X\coprod Y$ are isometries. This illustrates why working with self-adjoint elements is desirable, as this extension property fails for complex valued Lipschitz functions \cite{Weaver99}. Now, the Gromov-Hausdorff distance may be defined as the infimum of the Hausdorff distance between $X$ and $Y$ in $X\coprod Y$ taken over all possible admissible metrics on $X\coprod Y$ \cite{Gromov} (there are alternative definitions of the Gromov-Hausdorff distance which lead to equivalent metrics \cite{Gromov}). It is thus clear to see the analogy between Rieffel's quantum Gromov-Hausdorff metric and Gromov's original metric \cite{Gromov81,Gromov}.

In fact, Rieffel's work in \cite{Rieffel99,Rieffel00} takes place within the category of order-unit spaces: all the above definitions are given within this category, and our current description is a special case of Rieffel's original framework where we focus on C*-algebras. The advantage of the generality of Rieffel's approach is that it allows to prove that two quantum compact metric spaces are close using the triangle inequality and various ``pivot'' or intermediate compact quantum metric spaces which are not based upon C*-algebras. To begin with, the self-adjoint part of C*-algebras is not a *-subalgebra, though it is a Jordan-Lei algebra --- a fact we shall take advantage of in our construction of the quantum propinquity. Another prime example, in the case of the continuity of the family of quantum tori, is to truncate the ``Fourier'' series representations of elements of the quantum tori to polynomials in the generators of bounded degrees \cite{Rieffel00}. The resulting order-unit spaces are no longer Jordan-Lie algebras.

This flexibility, however, comes at a price. A first consequence is that two quantum compact metric spaces may be at distance zero for the quantum Gromov-Hausdorff distance without their underlying C*-algebras to be *-isomorphic: distance zero only provides a unital order-isomorphism between their self-adjoint parts, whose dual restricts to an isometry between state spaces endowed with their {\mongekant}. Such a map gives rise to a Jordan isomorphism between the self-adjoint part of the compact quantum metric spaces \cite{Alfsen01}, yet can not distinguish, for instance, between a C*-algebra and its opposite. There are some examples of C*-algebras which are not *-isomorphic to their opposite. 

Second, if one wishes to study convergences of associated C*-algebraic structures such as projective modules, one encounter the difficulty of having to extend these notions to order-unit spaces. Thus, much effort was put to fix one or both of these problems \cite{li03,li05,kerr02,kerr09,Wu06b,Rieffel10c}. The quantum propinquity, which we introduce next, is our own attempt \cite{Latremoliere13} to address these issues. Its particularity is that it focuses on solving, in addition to the two problems cited above, the third complication of working with Leibniz seminorms. This later issue seems very important in recent work \cite{Rieffel10} and our metric is designed to address this matter as well as the coincidence axiom and the desire to work within the category of C*-algebras. We also introduced a dual version, called the dual Gromov-Hausdorff propinquity \cite{Latremoliere13b}, which also addresses the same difficulties, and is complete, though it is more lax in what constitute an admissible Lip-norm. As the dual Gromov-Hausdorff propinquity is dominated by the quantum propinquity, and as our work applies to the stronger metric, we shall focus on the quantum propinquity in this paper.



\subsection{The Quantum Gromov-Hausdorff Propinquity}

This section introduces our metric, the quantum Gromov-Hausdorff propinquity --- which we also call the quantum propinquity for brevity. This section summarizes \cite{Latremoliere13}, to which we refer for discussions, motivations, comparison to other metrics, and most importantly, proofs of the theorems we cite here.

The Lipschitz seminorm $\mathrm{Lip}_{\mathsf{m}}$ associated to a compact metric space $(X,\mathsf{m})$ (see Equation (\ref{Lip-eq}) enjoys a natural property with regard to the multiplication of functions in $C(X)$, called the Leibniz property for seminorms:
\begin{equation}\label{Leibniz-eq}
\forall f,g \in C(X) \quad \mathrm{Lip}_{\mathsf{m}}(fg)\leq\|f\|_{C(X)}\mathrm{Lip}_{\mathsf{m}}(g) + \mathrm{Lip}_{\mathsf{m}}(f)\|g\|_{C(X)}\text{.}
\end{equation}
Moreover, the Lipschitz seminorm is lower-semicontinuous with respect to the C*-norm of $C(X)$, i.e. the uniform convergence norm on $X$. These two additional properties were not assumed in Definition (\ref{quantum-compact-metric-space-def}), yet they are quite natural and prove very useful in developing a C*-algebra based theory of Gromov-Hausdorff convergence, as we shall now see.

While the product of two real-valued functions over $X$ is still real-valued, the situation in the noncommutative setting is more complex. On the other hand, we wish to avoid the difficulties involved in working with the analogues of complex-valued Lipschitz functions (see \cite{Rieffel10c} for some of these difficulties). Thus, we are led to the following generalization of the Leibniz property:
\begin{notation}
For any C*-algebra $\A$ and for any $a,b \in \sa{\A}$, we denote the Jordan product $\frac{1}{2}(ab+ba)$ of $a$ and $b$ by $\Jordan{a}{b}$, while we denote the Lie product $\frac{1}{2i}(ab-ba)$ of $a$ and $b$ by $\Lie{a}{b}$. The triple $(\sa{\A},\Jordan{\cdot}{\cdot},\Lie{\cdot}{\cdot})$ is a Jordan-Lie algebra \cite{Alfsen01,landsman98}.
\end{notation}
\begin{definition}\label{lqcms-def}
A quantum compact metric space $(\A,\Lip)$ is a \emph{\Lqcms} when $\Lip$ is lower semi-continuous for the norm $\|\cdot\|_\A$ of $\A$, and for all $a,b \in \sa{\A}$, we have:
\begin{align*}
\Lip\left(\Jordan{a}{b}\right)&\leq\|a\|_\A\Lip(b) + \Lip(a)\|b\|_\A
\intertext{and}
\Lip\left(\Lie{a}{b}\right)&\leq\|a\|_\A\Lip(b) + \Lip(a)\|b\|_\A\text{.}
\end{align*}
\end{definition}

The purpose of the quantum propinquity is to define a metric on the class $\LCQMS$ of all {\Lqcms s}, up to the following natural notion of isometry:
\begin{definition}\label{isometry-def}
Two {\Lqcms s} $(\A,\Lip_\A)$ and $(\B,\Lip_\B)$ are \emph{isometrically isomorphic} when there exists a *-isomorphism $h$ from $\A$ onto $\B$ such that, for all $a\in\sa{\A}$, we have $\Lip_\B\circ h (a) = \Lip_\A(a)$.
\end{definition}
This notion of isometry is stronger than the one obtained for the quantum Gromov-Hausdorff distance in two ways. First and most notably, it requires that the given isometry is a *-isomorphism. Second, since we assumed that the Lip-norms of {\Lqcms s} are lower-semicontinuous and defined on a Banach space, a unital, positive linear map $h: \sa{\A}\rightarrow\sa{\B}$ has a dual map $\varphi\in\StateSpace(\B)\rightarrow\varphi\circ h\in\StateSpace(\A)$ which is an isometry from $\Kantorovich{\Lip_\B}$ into $\Kantorovich{\Lip_\A}$ if and only if $\Lip_\B\circ h = \Lip_\A$.

A natural first attempt to modify the quantum Gromov-Hausdorff distance to work in $\LCQMS$ is to require that an admissible Lip-norm should be Leibniz, where admissible is used in the sense of the previous section. However, this construction proposed by Rieffel in \cite{Rieffel10c} does not seem to lead to a metric. The difficulty, known for quite some time \cite{Blackadar91}, is that the quotient of a Leibniz seminorm may not be Leibniz; in turn the proof of the triangle inequality for the quantum Gromov-Hausdorff distance fails to specialize to the proximity. Rieffel introduced in \cite{Rieffel10c} the quantum proximity as such a modification of his quantum Gromov-Hausdorff distance, where admissibility is even more constrained (as it requires the strong Leibniz property, which is expressed in terms of the C*-algebra multiplication rather than the Jordan-Lie structure, and adds a condition about invertible elements. The issues are the same and are addressed just as well with our quantum propinquity).

A modification of the proximity led us to define the quantum Gromov-Hausdorff propinquity in \cite{Latremoliere13}, and later the dual Gromov-Hausdorff propinquity in \cite{Latremoliere13b}, which are metrics on $\LCQMS$ with the proper coincidence property --- the latter being also complete. There is an interest in being even more specific in our choice of admissible Lip-norms than require the Leibniz, or even strong Leibniz property. Rieffel proposes to use Leibniz Lip-norms constructed from bimodules, where the bimodules are themselves C*-algebras \cite{Rieffel10c,Rieffel10}. This stronger structural requirement is the basis for the quantum Gromov-Hausdorff propinquity (and is not considered with our dual propinquity).

The construction of the quantum propinquity relies on the notion of a bridge, defined as follows. We refer to \cite{Latremoliere13} for all the following definitions and results.

\begin{definition}\label{bridge-def}
Let $(\A,\Lip_\A)$ and $(\B,\Lip_\B)$ be two {\Lqcms s}. A \emph{bridge} from $(\A,\Lip_\A)$ to $(\B,\Lip_\B)$ is a quadruple $\gamma=(\D,\pi_\A,\pi_\B,\omega)$ where:
\begin{enumerate}
\item $\D$ is a unital C*-algebra,
\item $\pi_\A: \A\rightarrow\D$ and $\pi_\D:\B\rightarrow\D$ are unital *-monomorphisms,
\item $\omega\in\D$ such that the set:
\begin{equation*}
\StateSpace_1(\omega) = \left\{ \varphi \in \StateSpace(\D) : \forall d\in\sa{\D} \quad \varphi(\omega d)=\varphi(d\omega) = \varphi(d) \right\}
\end{equation*}
is not empty.
\end{enumerate}
The C*-algebra $\D$ is the \emph{bridge $C^\ast$-algebra} and $\omega$ is the \emph{pivot element} of the bridge $\gamma$.
\end{definition}

The purpose of bridges is to construct admissible Leibniz Lip-norms. Given a bridge $\gamma = (\D,\pi_\A,\pi_\B,\omega)$ from a {\Lqcms} $(\A,\Lip_\A)$ to a {\Lqcms} $(\B,\Lip_\B)$, we begin by defining the bridge seminorm $\bridgenorm{\gamma}{\cdot,\cdot}$ of $\gamma$:
\begin{equation*}
(a,b)\in\sa{\A\oplus\B} \longmapsto \bridgenorm{\gamma}{a,b} = \left\|\pi_\A(a)\omega - \omega\pi_\B(b)\right\|_\D\text{.}
\end{equation*}
We then wish to find $\lambda > 0$ such that the seminorm:
\begin{equation}\label{admissible-eq}
\Lip_{\gamma,\lambda} : (a,b)\in\sa{\A\oplus\B} \longmapsto \max\left\{ \Lip_\A(a),\Lip_\B(b),\frac{1}{\lambda}\bridgenorm{\gamma}{a,b}\right\}
\end{equation}
is an admissible Lip-norm on $\sa{\A}\oplus\sa{\B}$ for $(\Lip_\A,\Lip_\B)$. By construction, for such a choice of $\lambda>0$, the seminorm $\Lip_{\gamma,\lambda}$ is a Leibniz Lip-norm. We introduce several numerical quantities related to $\gamma$ to determine which $\lambda>0$ could be chosen.

The first number is the \emph{reach} of the bridge, which measures how far two {\Lqcms s} are using, informally, the bridge seminorm:
\begin{notation}
The Hausdorff distance on subsets of a metric space $(X,\mathsf{m})$ is denoted by $\Haus{\mathsf{m}}$. If $(X,\mathsf{m})$ is given as a vector space $X$ and the distance $\mathsf{m}$ induced from a norm $\|\cdot\|_X$ on $X$, the we simply write $\Haus{\|\cdot\|_X}$ for $\Haus{\mathsf{m}}$.
\end{notation}

\begin{definition}\label{reach-def}
The \emph{reach} $\bridgereach{\gamma}{\Lip_\A,\Lip_\B}$ of a bridge $\gamma = (\D,\pi_\A,\pi_\B,\omega)$ from a {\Lqcms} $(\A,\Lip_\A)$ to a {\Lqcms} $(\B,\Lip_\B)$ is:
\begin{equation*}
\Haus{\|\cdot\|_\D}\left(\pi_\A\left(\left\{a\in\sa{\A}:\Lip_\A(a)\leq 1\right\}\right)\omega,\omega\pi_\B\left(\left\{b\in\sa{\B}:\Lip_\B(b)\leq 1\right\}\right)\right)\text{.}
\end{equation*}
\end{definition}
While the reach is defined as the Hausdorff distance between non-compact, not bounded subsets, one can verify that the reach is always finite and reached \cite{Latremoliere13}.

The reach informs us on how far the sets:
\begin{equation*}
\left\{\varphi\circ\pi_\A:\varphi\in\StateSpace_1(\omega)\right\}\text{ and }\left\{\varphi\circ\pi_\B:\varphi\in\StateSpace_1(\omega)\right\}
\end{equation*}
are for the Hausdorff distance defined on $\StateSpace(\D)$ by $\Kantorovich{\Lip_\D}$. To be of use, we also must measure how far these two sets are from the whole state space. This leads us to:
\begin{definition}\label{height-def}
The \emph{height} $\bridgeheight{\gamma}{\Lip_1,\Lip_2}$ of a bridge $\gamma = (\D,\pi_1,\pi_2,\omega)$ from a {\Lqcms} $(\A_!,\Lip_1)$ to a {\Lqcms} $(\A_2,\Lip_2)$ is the nonnegative real number:
\begin{equation*}
\max\left\{ \Haus{\Kantorovich{\Lip_j}}\left(\StateSpace(\A_j),\{\mu\circ\pi_j:\mu\in\StateSpace_1(\omega)\}\right) : j\in\{1,2\} \right\}\text{.}
\end{equation*}
\end{definition}

The length of a bridge is thus naturally defined as:
\begin{definition}\label{length-def}
The length $\bridgelength{\gamma}{\Lip_\A,\Lip_\B}$ of a bridge $\gamma$ from a {\Lqcms} $(\A,\Lip_\A)$ to a {\Lqcms} $(\B,\Lip_\B)$ is defined as:
\begin{equation*}
\max\left\{\bridgeheight{\gamma}{\Lip_\A,\Lip_\B},\bridgereach{\gamma}{\Lip_\A,\Lip_\B} \right\}\text{.}
\end{equation*}
\end{definition}

One may check that, for any $\lambda > 0$ such that $\lambda\geq\bridgelength{\gamma}{\Lip_\A,\Lip_\B}$, the seminorm $\Lip_{\gamma,\lambda}$ is indeed an admissible Leibniz Lip-norm \cite[Theorem 6.3]{Latremoliere13}.

Though it would be natural to define the quantum propinquity between two {\Lqcms s} as the infimum of the lengths of all possible bridges between them, such a construction would fail to satisfy the triangle inequality. Instead, we propose:
\begin{definition}\label{propinquity-def}
Let $(\A,\Lip_\A)$ and $(\B,\Lip_\B)$ be two {\Lqcms s}. The quantum Gromov-Hausdorff propinquity $\propinquity((\A,\Lip_\A),(\B,\Lip_\B))$ is given as:
\begin{equation*}
\inf\left\{ \sum_{j=1}^n \bridgelength{\gamma_j}{\Lip_j,\Lip_{j+1}}\middle\vert
\begin{array}{l}
n\in\N,n>0,\\
\forall j\in\{1,\ldots,n+1\}\quad(\A_j,\Lip_j)\text{ is a }\\\hspace{1cm}\text{{\Lqcms}},\\
\forall j\in\{1,\ldots,n\}\quad \gamma_j \text{ is a bridge from $\A_j$ to $\A_{j+1}$,}\\
(\A_1,\Lip_1)=(\A,\Lip_\A)\text{ and }(\A_{n+1},\Lip_{n+1})=(\B,\Lip_\B)
\end{array}
\right\}\text{.}
\end{equation*}
\end{definition}

The main results of \cite{Latremoliere13} now read:
\begin{theorem}[Theorem 5.13, Theorem 6.1,\cite{Latremoliere13}]
Let $(\A,\Lip_\A)$, $(\B,\Lip)$ and $(\D,\Lip_\D)$ be three {\Lqcms s}. Then:
\begin{enumerate}
\item $\propinquity((\A,\Lip_\A),(\B,\Lip_\B)) = \propinquity((\B,\Lip_\B),(\A,\Lip_\A))$,
\item $\propinquity((\A,\Lip_\A),(\D,\Lip_\B)) \leq \propinquity((\A,\Lip_\A),(\B,\Lip_\B)) + \propinquity((\B,\Lip_\B),(\D,\Lip_\B))$,
\item $\propinquity((\A,\Lip_\A),(\B,\Lip_\B)) = 0$ if and only if there exists a *-isomorphism $h : \A\rightarrow\B$ such that $\Lip_\B\circ h = \Lip_\A$.
\end{enumerate}
\end{theorem}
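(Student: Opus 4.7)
The proof breaks into three independent items, with the first two short and the third the real substance.

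\textbf{Symmetry.} For any bridge $\gamma=(\D,\pi_\A,\pi_\B,\omega)$ from $(\A,\Lip_\A)$ to $(\B,\Lip_\B)$, I would verify that $\gamma^\ast := (\D,\pi_\B,\pi_\A,\omega^\ast)$ is a bridge in the reverse direction of the same length. The key computation is that $\varphi\in\StateSpace_1(\omega)$ implies $\varphi(\omega^\ast d) = \overline{\varphi(d^\ast\omega)} = \overline{\varphi(d^\ast)} = \varphi(d)$ for $d\in\sa{\D}$, and likewise on the right, so $\StateSpace_1(\omega)\subseteq\StateSpace_1(\omega^\ast)$ and the latter is nonempty. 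Since $\|\pi_\B(b)\omega^\ast-\omega^\ast\pi_\A(a)\|_\D = \|(\pi_\A(a)\omega-\omega\pi_\B(b))^\ast\|_\D$, the bridge seminorm, and thus the reach and height, are unchanged. Reversing any finite chain of bridges then yields the symmetry of $\propinquity$ directly from Definition \ref{propinquity-def}.

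\textbf{Triangle inequality.} This is immediate from the chain-based definition: given $\varepsilon>0$, concatenate a chain from $\A$ to $\B$ of total length at most $\propinquity(\A,\B)+\varepsilon/2$ with a chain from $\B$ to $\D$ of total length at most $\propinquity(\B,\D)+\varepsilon/2$ to obtain an admissible chain from $\A$ to $\D$ whose total length witnesses the desired inequality; let $\varepsilon\to 0$.

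\textbf{Coincidence, easy direction.} If $h:\A\to\B$ is a $\ast$-isomorphism with $\Lip_\B\circ h=\Lip_\A$, the single bridge $\gamma_h=(\B,h,\mathrm{id}_\B,\unit_\B)$ has length zero. Indeed $\StateSpace_1(\unit_\B)=\StateSpace(\B)$, so the height vanishes, and for any $b\in\sa{\B}$ with $\Lip_\B(b)\leq 1$ the element $a=h^{-1}(b)$ satisfies $\Lip_\A(a)\leq 1$ and $\|h(a)-b\|_\B=0$, so the reach also vanishes.

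\textbf{Coincidence, hard direction.} This is the main obstacle. Suppose $\propinquity((\A,\Lip_\A),(\B,\Lip_\B))=0$. I would extract a sequence of chains of bridges whose total lengths vanish, and from each bridge $\gamma$ of length at most $\lambda$ form the Leibniz admissible Lip-norm $\Lip_{\gamma,\lambda}$ of (\ref{admissible-eq}). Because $\Lip_{\gamma,\lambda}$ is admissible in the sense of Rieffel, the quantum Gromov-Hausdorff distance between the endpoints is controlled by $\lambda$, and iterating along the chain by the triangle inequality of that distance, one sees that $(\A,\Lip_\A)$ and $(\B,\Lip_\B)$ are at Rieffel distance zero. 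Passing through the standard Arzel\`a--Ascoli argument applied to the lower semicontinuous balls $\{a:\Lip_\A(a)\leq 1,\|a\|_\A\leq R\}$, a diagonal extraction produces a unital, positive, isometric linear map $T:\sa{\A}\to\sa{\B}$ with $\Lip_\B\circ T=\Lip_\A$. The crucial step — and the delicate one — is upgrading $T$ to a $\ast$-isomorphism: here the extra data carried by the bridges must be used. Because each $\Lip_{\gamma,\lambda}$ is Leibniz for both the Jordan and Lie products on the direct sum, the approximating elements produced along the chain satisfy product estimates that pass to the limit, forcing $T$ to preserve both $\Jordan{\cdot}{\cdot}$ and $\Lie{\cdot}{\cdot}$. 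A Jordan-Lie morphism between the self-adjoint parts of unital C*-algebras extends uniquely to a $\ast$-homomorphism $h:\A\to\B$, which must be an isomorphism by symmetry of the hypothesis, and satisfies $\Lip_\B\circ h=\Lip_\A$ by construction. The most technical part is ensuring that the Leibniz product estimates survive the limit argument across a chain of possibly many bridges, which is precisely the point at which the Leibniz property in Definition \ref{lqcms-def} is essential.
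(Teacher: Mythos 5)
First, a caveat: the paper does not prove this theorem at all — it is quoted verbatim from \cite{Latremoliere13} — so the comparison below is with the argument given there. Your treatment of symmetry (the reversed bridge $(\D,\pi_\B,\pi_\A,\omega^\ast)$, with the computation showing $\StateSpace_1(\omega)\subseteq\StateSpace_1(\omega^\ast)$ and the adjoint identity for the bridge seminorm), of the triangle inequality (concatenation of treks), and of the easy direction of the coincidence property (the bridge $(\B,h,\mathrm{id}_\B,\unit_\B)$ of length zero) is correct and essentially the argument of \cite{Latremoliere13}.

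The genuine gap is in the hard direction of (3), and it is exactly at the step you flag as ``delicate''. Two problems. First, the detour through Rieffel's quantum Gromov--Hausdorff distance is a dead end: the unital order-isometry $T$ produced by Rieffel's coincidence theorem comes from an abstract compactness argument on admissible Lip-norms and carries no memory of the bridges, so there is nothing with which to ``force'' this particular $T$ to respect $\Jordan{\cdot}{\cdot}$ and $\Lie{\cdot}{\cdot}$; the morphism must instead be constructed directly as a limit of the approximations furnished by the treks (the target sets of \cite{Latremoliere13}). Second, the ``product estimates that pass to the limit'' are precisely the content to be proven, and they do not follow softly from Leibnizness of $\Lip_{\gamma,\lambda}$; they come from the algebraic shape of the bridge seminorm. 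For $a_1,a_2\in\sa{\A}$ and $b_1,b_2\in\sa{\B}$ one writes
\begin{equation*}
\pi_\A(a_1a_2)\omega-\omega\pi_\B(b_1b_2)=\pi_\A(a_1)\left(\pi_\A(a_2)\omega-\omega\pi_\B(b_2)\right)+\left(\pi_\A(a_1)\omega-\omega\pi_\B(b_1)\right)\pi_\B(b_2)\text{,}
\end{equation*}
and the analogous identity with the indices exchanged, which give Leibniz-type bounds on $\bridgenorm{\gamma}{\Jordan{a_1}{a_2},\Jordan{b_1}{b_2}}$ and $\bridgenorm{\gamma}{\Lie{a_1}{a_2},\Lie{b_1}{b_2}}$ in terms of $\bridgenorm{\gamma}{a_j,b_j}$, $\|a_j\|_\A$, $\|b_j\|_\B$ (this identity is also the reason $\Lip_{\gamma,\lambda}$ is Leibniz in the first place). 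One must then use the height to convert smallness of the bridge seminorm into estimates against enough states, and the Lip-norms to convert those into norm estimates; the Leibniz property of $\Lip_\A$, $\Lip_\B$ \emph{and of the intermediate Lip-norms along the trek} is what keeps the Lip-norms and norms of products of approximants under control so that these estimates compose with total error proportional to the trek length. Finally, invertibility of the limit map is not ``by symmetry of the hypothesis'': one must run the same construction along the reversed treks and verify that the two limit morphisms are mutually inverse. As written, your proposal asserts rather than establishes the heart of the coincidence property.
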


\begin{theorem}[Theorem 6.3, Corollary 6.4, Theorem 6.6 ,\cite{Latremoliere13}]
The quantum propinquity dominates the quantum Gromov-Hausdorff distance. Moreover, for any two compact metric spaces $(X,\mathsf{m}_X)$ and $(Y,\mathsf{m}_Y)$, if $\Lip_X$ and $\Lip_Y$ are the respective Lipschitz seminorms on $C(X)$ and $C(Y)$ for $\mathsf{m}_X$ and $\mathsf{m}_Y$ (see Equation (\ref{Lip-eq}), then $\propinquity((C(X),\Lip_X),(C(Y),\Lip_Y))$ is less or equal to the Gromov-Hausdorff distance between $(X,\mathsf{m})$ and $(Y,\mathsf{m}_Y)$.
\end{theorem}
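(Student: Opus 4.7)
The plan breaks into two parts, handled differently. For Part 1 (the propinquity dominates the quantum Gromov-Haus\-dorff distance), I would exploit the construction of Lip-norms from bridges already supplied by Theorem 6.3 of \cite{Latremoliere13}, quoted in the excerpt. Given any bridge $\gamma=(\D,\pi_\A,\pi_\B,\omega)$ from $(\A,\Lip_\A)$ to $(\B,\Lip_\B)$ of length $\lambda=\bridgelength{\gamma}{\Lip_\A,\Lip_\B}$, the associated seminorm $\Lip_{\gamma,\lambda}$ defined in (\ref{admissible-eq}) is a Leibniz Lip-norm on $\sa{\A\oplus\B}$ whose quotients are $\Lip_\A$ and $\Lip_\B$; in particular it is admissible in Rieffel's sense. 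It then suffices to bound, by a constant multiple of $\lambda$, the Hausdorff distance between the embedded copies of $\StateSpace(\A)$ and $\StateSpace(\B)$ in $\StateSpace(\A\oplus\B)$ under $\Kantorovich{\Lip_{\gamma,\lambda}}$. The pivot identity carries this out: for any $\mu\in\StateSpace_1(\omega)$ and any $(a,b)$ with $\Lip_{\gamma,\lambda}(a,b)\leq 1$,
\begin{equation*}
\mu(\pi_\A(a))-\mu(\pi_\B(b))=\mu(\pi_\A(a)\omega-\omega\pi_\B(b)),
\end{equation*}
whose absolute value is bounded by $\bridgenorm{\gamma}{a,b}\leq\lambda$; combined with the height bound, which locates any state of $\A$ (resp.\ $\B$) within $\lambda$ of some $\mu\circ\pi_\A$ (resp.\ $\mu\circ\pi_\B$) in $\Kantorovich{\Lip_\A}$ (resp.\ $\Kantorovich{\Lip_\B}$), a direct state-transport argument yields the desired bound. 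For a trek of bridges, iterating Rieffel's triangle inequality through the intermediate {\Lqcms s} and taking the infimum closes Part 1.

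For Part 2, fix $\epsilon>\gh((X,\mathsf{m}_X),(Y,\mathsf{m}_Y))$ and pick, by the admissible-metric characterization of the classical Gromov-Haus\-dorff distance, a metric $\mathsf{m}$ on $Z=X\coprod Y$ extending $\mathsf{m}_X$ and $\mathsf{m}_Y$ with $\Haus{\mathsf{m}}(X,Y)\leq\epsilon$. For each $\delta>0$ I aim to exhibit an explicit bridge from $(C(X),\Lip_X)$ to $(C(Y),\Lip_Y)$ of length at most $\epsilon+\delta$. Take $\D=C(X\times Y)$; take $\pi_X(f)(x,y)=f(x)$ and $\pi_Y(g)(x,y)=g(y)$, the unital $\ast$-monomorphisms induced by the two coordinate projections $X\times Y\rightarrow X$, $X\times Y\rightarrow Y$; and take as pivot
\begin{equation*}
\omega(x,y)=\vartheta(\mathsf{m}(x,y)),
\end{equation*}
where $\vartheta:[0,\infty)\rightarrow[0,1]$ is continuous, equal to $1$ on $[0,\epsilon]$, zero on $[\epsilon+\delta,\infty)$, and affine on $[\epsilon,\epsilon+\delta]$. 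The bound $\Haus{\mathsf{m}}(X,Y)\leq\epsilon$ forces $\{\omega=1\}=\{(x,y):\mathsf{m}(x,y)\leq\epsilon\}$ to project onto each factor, so $\StateSpace_1(\omega)$ is nonempty and, via a Kuratowski--Ryll-Nardzewski measurable selection applied to the closed multifunctions $x\mapsto\{y:\mathsf{m}(x,y)\leq\epsilon\}$ and $y\mapsto\{x:\mathsf{m}(x,y)\leq\epsilon\}$, the pushforwards $\{\mu\circ\pi_X:\mu\in\StateSpace_1(\omega)\}$ and $\{\mu\circ\pi_Y:\mu\in\StateSpace_1(\omega)\}$ exhaust $\StateSpace(C(X))$ and $\StateSpace(C(Y))$; hence the height is zero. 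For the reach, any $f\in C(X)$ with $\Lip_X(f)\leq 1$ admits a McShane extension $\hat{f}$ on $(Z,\mathsf{m})$ with the same Lipschitz constant, and $g=\hat{f}|_Y$ satisfies $\Lip_Y(g)\leq 1$; at any $(x,y)$ with $\omega(x,y)>0$, $|\omega(x,y)(f(x)-g(y))|\leq\mathsf{m}(x,y)\leq\epsilon+\delta$. The symmetric direction is identical, so the bridge length is at most $\epsilon+\delta$. Letting $\delta\downarrow 0$ and then $\epsilon\downarrow\gh$ gives the desired inequality.

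The genuine obstacle is the commutative bridge construction: in a commutative bridge algebra $\D=C(W)$, the condition $\StateSpace_1(\omega)\neq\emptyset$ forces $\omega$ to attain the value $1$ somewhere on $W$, yet the admissible metric $\mathsf{m}$ makes $X$ and $Y$ disjoint in $X\coprod Y$, so a ``sharp'' choice such as $\omega(x,y)=\max(0,1-\mathsf{m}(x,y)/\epsilon)$ never reaches $1$. The plateau $\vartheta$ sidesteps this at the cost of the auxiliary slack $\delta>0$, which is closed by passing to the infimum already inherent in the definition of the propinquity. An alternative would be to use a noncommutative bridge algebra such as $M_2(C(Z))$ with a suitable off-diagonal pivot, avoiding the slack at the price of more algebraic bookkeeping; the plateau approach is more concrete and quite sufficient for the bound. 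With the correct bridge isolated, the pivot identity in Part 1 and the reach/height estimates in Part 2 reduce to routine verifications.
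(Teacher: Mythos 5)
The statement you were asked to prove is not proved in this paper at all: it is quoted verbatim as background from \cite{Latremoliere13} (its Theorem 6.3, Corollary 6.4 and Theorem 6.6), so there is no in-paper argument to measure you against, and your blind sketch is a genuine self-contained proof outline --- and it is essentially sound. For the classical half, your bridge $\left(C(X\times Y),\pi_X,\pi_Y,\omega\right)$ with the plateau pivot works as claimed: $\StateSpace_1(\omega)$ is exactly the set of states coming from probability measures supported on $\{(x,y):\mathsf{m}(x,y)\leq\epsilon\}$, the height is zero (you do not even need Kuratowski--Ryll-Nardzewski: the marginals of such measures form a weak* compact convex subset of $\StateSpace(C(X))$ containing every Dirac mass, hence all of $\StateSpace(C(X))$, and symmetrically for $Y$), and the McShane extension argument bounds the reach by $\epsilon+\delta$; a variant that removes the $\delta$-slack entirely is to take the bridge algebra $C(R_\epsilon)$ with $R_\epsilon=\{(x,y):\mathsf{m}(x,y)\leq\epsilon\}$ and the unit as pivot, which is closer in spirit to how such comparisons are usually run. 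The one place where you should be more explicit is the first half: your transport estimate gives, for $\Lip_{\gamma,\lambda}(a,b)\leq 1$ and $\mu\in\StateSpace_1(\omega)$ chosen via the height, the bound $|\varphi(a)-\psi(b)|\leq\Kantorovich{\Lip_\A}(\varphi,\mu\circ\pi_\A)+\bridgenorm{\gamma}{a,b}\leq 2\lambda$, so what your argument (iterated over a trek) actually yields is $\mathrm{dist}_q\leq 2\propinquity$ rather than domination with constant one; this factor of two is in fact how the comparison appears in \cite{Latremoliere13}, so your hedge ``a constant multiple of $\lambda$'' is the honest formulation and the constant should be stated rather than left implicit in the word ``dominates''.
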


Thus, as desired, the quantum propinquity defines a metric on the class of {\Lqcms s}, up to isometry and *-isomorphism, which extends the notion of Gromov-Hausdorff convergence to noncommutative metric spaces and dominates Rieffel's quantum Gromov-Hausdorff distance while providing a mean to work exclusively within the category of {\Lqcms s}.

Our quantum propinquity is dominated by the unital nuclear distance introduced by Kerr and Li in \cite{kerr09}. Thus, continuity results, including regarding quantum tori and fuzzy tori, which are proven valid for the unital nuclear distance, are also valid for our metric. However, the use of a subtrivialization argument \cite{Blanchard97} and Hahn-Banach theorem in the proof that fuzzy tori approximate quantum tori in the unital nuclear distance \cite{li03,kerr09} does not provide a very natural bridge, in the sense of Definition (\ref{bridge-def}). The quantum propinquity, as we shall prove in this paper, provides a much more natural construction of a bridge, and thus by Equation (\ref{admissible-eq}), of much more natural admissible Leibniz Lip-norms, to compute estimates on the distances between quantum and fuzzy tori.

In \cite{Latremoliere13b}, motivated by the question of completeness of the quantum propinquity, we introduce a ``dual'' metric between {\Lqcms s} which we call the dual Gromov-Hausdorff propinquity $\mathsf{\Lambda}^\ast$. This new metric is shown to be complete, and is dominated by the quantum propinquity. In this paper, we shall prove a convergence result for the quantum propinquity, and thus our result carries to the dual Gromov-Hausdorff propinquity as well.

Last, it should be noted that in \cite{Latremoliere13,Latremoliere13b}, we allow for various modifications of the quantum and dual propinquities to fit various setups, by restricting the types of bridges one may employ in the computation of the propinquity between two {\Lqcms s}. In this paper, we work with the default setting described in this section. However, one could use the modified versions introduced in \cite{Latremoliere13,Latremoliere13b} restricted to C*-compact metric spaces \cite{Rieffel10c} and apply the following argument unchanged to these stronger propinquity metrics.

\section{Quantum and Fuzzy Tori}

Quantum tori have a long and rich history as the foundation for the field of noncommutative geometry. Their origin may be traced to the notion of C*-crossed products introduced by Zeller-Meier \cite{Zeller-Meier68}, when applied to the action of $\Z$ on the circle $\T$ generated by a rotation of angle $2\pi\theta$ ($\theta \in [0,1)$), leading to the rotation algebra $\A_\theta$. Quantum tori are defined as twisted group C*-algebras of $\Z^d$ for any $d>1$, with the rotation algebras giving all the quantum tori for $d=2$. Following on the work of Effr{\"o}s and Hahn in \cite{EffrosHahn67} about transformation group C*-algebras, Rieffel's surprisingly constructed non-trivial projections in $\A_\theta$ for any irrational $\theta$. The matter of classifying projections in these rotation algebras was linked to the classification of  the rotation algebras themselves, and led to an important chapter in the development of $K$-theory for C*-algebra theory \cite{PimVoi80a,Pimsner80}. Connes proposed in \cite{Connes80} to consider rotation algebras as noncommutative generalizations of differentiable manifolds, by exploiting the transport of structure from the torus to rotation algebras made possible via the dual action. This early, ground-breaking work open the field of noncommutative geometry, where quantum tori remain to this day a central class of examples. A very incomplete list of examples of such developments can be found in the work of Rieffel on noncommutative vector bundles over quantum tori \cite{Rieffel83,Rieffel87b,Rieffel88,Rieffel87c}), noncommutative Yang-Mills theory \cite{ConnesRieffel87,Rieffel90a}, and the summary presented in \cite{Rieffel90}, to mention but a few.

Noncommutative tori are not approximately finite (AF), since their $K_1$ groups are nontrivial --- though Pimsner and Voiculescu showed in \cite{PimVoi80a} how to embed irrational rotation algebras in AF algebras. Irrational rotation algebras, however, are inductive limits of direct sum of circle algebras (AT), as shown by Elliott and Evans \cite{Elliott93b}, which started the program of classification for AT algebras and further developments in classification theory for C*-algebras. Quantum tori also provide a fundamental model for strict quantization \cite{Rieffel90}, i.e. can be seen as a continuous deformation of the commutative C*-algebra $C(\T^d)$. 

More recently, interests in models of physical interests on ``fuzzy spaces'', i.e. spaces described by matrix algebras \cite{Madore}, grew within the mathematical physics community, with particular emphasis on models developed on ``fuzzy tori'', e.g. in \cite{tHooft02,Connes97,Zumino98}. A common problem in these models is the study of the behavior of various quantities when the dimension of the underlying matrix algebras grow to infinity, with the hope that, in some sense, one may obtain in the limit a noncommutative physics theory on quantum tori. Yet, as we noted above, quantum tori are not AF, and thus the question of what an adequate limiting process for these physical constructions should be was left unanswered for some time.

Motivated by this question, Rieffel proposed the quantum Gro\-mov-Haus\-dorff distance \cite{Rieffel00} as a new mean to define approximations of C*-algebras endowed with metric information. We then proved in \cite{Latremoliere05} that, indeed, quantum tori are limits of fuzzy tori for this new form of convergence. As we have discussed, the quantum Gro\-mov-Haus\-dorff distance is somewhat unaware of the product structure of the underlying quantum spaces, and the need for a stronger version of this metric arose in recent developments in noncommutative metric geometry. This is our motivation to introduce the quantum propinquity, as explained in the introduction. Now, we must address the challenge of convergence of matrix algebras to quantum tori for our new metric once again. One approach is to use the domination of the quantum propinquity by the unital nuclear distance \cite{kerr09}, but this does not lead to explicit bridges and is not very amenable to quantitative arguments. We thus are led to seek a more quantitative and direct approach. This is the matter addressed in this paper.

\subsection{Background}

This preliminary subsection contains a brief summary of the various facts and notations we will use in our work with quantum and fuzzy tori. 

\begin{notation}
Let $\N_\ast = \N \setminus \{0,1\}$.
\end{notation}

\begin{notation}
Let $\Nbar = \N_\ast\cup\{\infty\}$ be the one-point compactification of $\N_\ast$. For any $d \in \N_\ast$ and $k = (k_1,k_2,\ldots,k_d) \in \Nbar^d$, we set:
\begin{equation*}
k\Z^d = \prod_{j=1}^d k_j \Z \quad\text{ and }\quad\Z^d_k = \Mod{\Z^d}{k\Z^d} \text{,}
\end{equation*} 
with the convention that $\infty\Z = \{0\}$, so that $\Z^d_{(\infty,\ldots,\infty)} = \Z^d$. The Pontryagin dual of $\Z^d_k$ is denoted by $\U^d_k$. In particular, if $k\in \N^d$ then $\Z^d_k$ is finite and thus self-dual. However, we shall always consider $\U_k^d$ as a compact subgroup of the $d$-torus $\U^d = \U^d_{(\infty,\ldots,\infty)}$, where $\U = \{ z \in \C : |z| = 1 \}$ is the unitary group of $\C$.
\end{notation}

\begin{notation}
Let $d\in \N_\ast$. The set of all skew-bicharacters of $\Z^d_k$ is denoted by $\mathds{B}_k^d$. We define:
\begin{equation*}
\Xi^d = \left\{ (k,\sigma) : k \in \Nbar^d\text{ and } \sigma \in \mathds{B}_k^d \right\}\text{.}
\end{equation*} 
We identify $\Xi^d$ with the subset of $\Nbar^d \times \mathds{B}^d_{(\infty,\ldots,\infty)}$ consisting of pairs $(k,\sigma)$ such that $\sigma$ is the unique lift of an element of $\mathds{B}_k^d$ to $\mathds{B}^d_{(\infty,\ldots,\infty)}$. With this identification, $\Xi^d$ is topologized as a topological subspace of $\Nbar^d \times \mathds{B}^d_{(\infty,\ldots,\infty)}$ endowed with the product topology, where $\mathds{B}^d_{(\infty,\ldots,\infty)}$ is endowed with the topology of pointwise convergence.
\end{notation}

We can easily identify $\mathds{B}^d_{(\infty,\ldots,\infty)}$, endowed with the topology of pointwise convergence, with the quotient of the space $\Theta$ of all skew-bilinear forms from $\Z^d$ to $\R$ by the equivalence relation defined by the kernel of the map:
\begin{equation*}
\theta \in \Theta \longmapsto \left( x,y \in \Z^d \mapsto \exp\left(i\pi\theta(x,y)\right) \right)
\end{equation*}
where $\Theta$ is endowed with its usual topology as a subset of $d\times d$ matrices over $\R$. In particular, $\B^1_\infty$ can be identified with $\U$, or equivalently, with $\Mod{\R}{\Z}$. More generally, $\Nbar^d\times \mathds{B}^d_{(\infty,\ldots,\infty)}$ is a compact set, and as $\Xi^d$ is easily checked to be closed, it is a compact set as well.

\bigskip

For any (nonempty) set $E$ and any $p \in [1,\infty)$, the set $\ell^p(E)$ is the set of all $p$-summable complex valued families over $E$, endowed with the norm:
\begin{equation*}
\|\xi\|_p = \left(\sum_{x \in E} |\xi_x|^p\right)^{\frac{1}{p}}
\end{equation*}
for all $\xi = (\xi_x)_{x\in E} \in \ell^p(E)$.

The main objects of this paper are quantum and fuzzy tori, which are the enveloping C*-algebras of *-algebras defined by twisting the convolution products on finite products of cyclic groups, as follows. We refer to \cite{Zeller-Meier68} for a general reference on twisted convolution C*-algebras of discrete groups. In general, one employs $\U$-valued multipliers of $\Z^d_k$ to twist the convolution on $\ell^1(\Z^d)$, for any $d\in\N_\ast$ and $k\in\Nbar^d$. However, two cohomologous multipliers will lead to isomorphic algebras through such a construction, and by \cite{Kleppner65}, any $\U$-valued multiplier of $\Z^d_k$ is cohomologous to a skew-bicharacter of $\Z^d_k$. This justifies that we restrict our attention to deformation given by skew-bicharacters in the rest of this paper. We now can define:

\begin{definition}
Let $d\in \N_\ast$, $k \in \Nbar^d$ and $\sigma$ a skew bicharacter of $\Z^d_k$. The \emph{twisted convolution product} on $\ell^1\left(\Z^d_k\right)$ of any $f,g \in \ell^1\left(\Z^d_k\right)$ is:
\begin{equation*}
f \conv{k,\sigma} g : n \in \Z^d_k \longmapsto \sum_{m\in \Z^d_k} f(m)g(n-m)\sigma(m,n)\text{.}
\end{equation*}
\end{definition}

For any $d\in\N_\ast$, $k\in\Nbar^d$ and any skew-bicharacter $\sigma$ of $\Z^d_k$, the $2$-cocycle identity satisfied by $\sigma$ implies $\left(\ell^1\left(\Z^d_k\right),\conv{k,\sigma}\right)$ is indeed an associative algebra; moreover, this algebra carries a natural involution:

\begin{definition}
Let $d\in \N_\ast$, $k\in \Nbar^d$. For any $f \in \ell^1\left(\Z^d_k\right)$, we define the \emph{adjoint} $f^\ast$ of $f$ by:
\begin{equation*}
f^\ast : n \in \Z^d_k \longmapsto \overline{f(-n)} \text{.}
\end{equation*}
\end{definition}
We note that the adjoint operation does not depend on the choice of a skew bicharacter of $\Z^d_k$.

For any $d\in \N_\ast$ and $(k,\sigma)\in\Xi^d$, one checks easily that $\left(\ell^1\left(\Z^d_k\right),\conv{\sigma},\cdot^\ast\right)$ is a *-algebra , and we now wish to construct its enveloping C*-algebra. To do so, we shall choose a natural faithful *-representation of $\left(\ell^1\left(\Z^d_k\right),\conv{\sigma},\cdot^\ast\right)$ on $\ell^2\left(\Z^d_k\right)$.

For any set $E$, the Banach space $\ell^2(E)$ is a Hilbert space for the inner product:
\begin{equation*}
\xi,\eta \in \ell^2(E) \longmapsto \inner{\xi}{\eta} = \sum_{x\in E} \overline{\xi_x}\eta_x\text{.}
\end{equation*}
The canonical Hilbert basis $(e_x)_{x\in E}$ of $\ell^2(E)$ is given by:
\begin{equation*}
e_x : y \in E\mapsto \begin{cases} 1 \text{ if $y=x$}\\ 0 \text{ otherwise.}\end{cases}
\end{equation*}

As we shall work with representations on $\ell^2(\Z^d)$ in this whole paper, the following notation will prove useful.

\begin{notation}
The concrete C*-algebra of all bounded linear operators on $\ell^2(\Z^d)$ will be simply denoted by $\B^d$. We shall denote the norm for bounded linear operators on $\ell^2(\Z^d)$ by $\|\cdot\|_{\B^d}$.
\end{notation}

With these notations set, the chosen representation of $\left(\ell^1\left(\Z^d_k\right),\ast_\sigma,\cdot^\ast\right)$ used to construct our C*-algebras is given by:

\begin{theorem}\label{representation-thm}
Let $d\in \N_\ast$, $k \in \Nbar^d$ and $\sigma$ be a skew bicharacter of $\Z^d_k$. We define the operator $U_{k,\sigma}^n$ as the unique bounded linear operator of $\ell^2\left(\Z^d_k\right)$ such that:
\begin{equation}\label{U-eq}
U_{k,\sigma}^n e_m = \sigma(m,n)e_{m-n}
\end{equation}
for all $m\in \Z^d_k$, where $(e_m)_{m\in\Z^d_k}$ is the canonical Hilbert basis of $\ell^2\left(\Z^d_k\right)$.

The map:
\begin{equation*}
\rho_{k,\sigma} : f \in \ell^1\left(\Z^d_k\right) \longmapsto \sum_{n\in\Z^d_k} f(n)U_{k,\sigma}^n
\end{equation*} 
is a faithful *-representation of the *-algebra $\left(\ell^1\left(\Z^d_k\right),\conv{k,\sigma},\cdot^\ast\right)$ on $\ell^2\left(\Z^d_k\right)$. 
\end{theorem}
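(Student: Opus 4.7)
The plan is to establish the theorem in four self-contained steps: first, show each $U_{k,\sigma}^n$ is a well-defined unitary so that the series defining $\rho_{k,\sigma}(f)$ converges absolutely in operator norm for $f\in\ell^1(\Z^d_k)$; second, derive the twisted commutation $U_{k,\sigma}^n U_{k,\sigma}^m = \sigma(m,n)^{-1} U_{k,\sigma}^{n+m}$ from the bicharacter identities for $\sigma$; third, deduce multiplicativity $\rho_{k,\sigma}(f \conv{k,\sigma} g) = \rho_{k,\sigma}(f)\rho_{k,\sigma}(g)$ and $\ast$-compatibility $\rho_{k,\sigma}(f^\ast) = \rho_{k,\sigma}(f)^\ast$; fourth, verify faithfulness by evaluating at $e_0$.

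For the first step, since $\sigma$ takes values in $\U$, the prescription $e_m \mapsto \sigma(m,n)e_{m-n}$ maps the canonical Hilbert basis $(e_m)_{m\in\Z^d_k}$ onto unimodular multiples of itself indexed by the bijection $m\mapsto m-n$ of $\Z^d_k$; thus $U_{k,\sigma}^n$ extends uniquely to an isometry of $\ell^2(\Z^d_k)$, and the analogous formula for $-n$ provides an explicit inverse, so $U_{k,\sigma}^n$ is unitary. Because $\|U_{k,\sigma}^n\|_{\B^d}=1$, the series $\sum_{n\in\Z^d_k} f(n)U_{k,\sigma}^n$ converges absolutely in operator norm with $\|\rho_{k,\sigma}(f)\|_{\B^d}\leq \|f\|_1$, so $\rho_{k,\sigma}$ is a well-defined continuous linear map from $\ell^1(\Z^d_k)$ to $\B^d$.

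For the second step, a direct computation on a basis vector gives $U_{k,\sigma}^n U_{k,\sigma}^m e_p = \sigma(p,m)\sigma(p-m,n) e_{p-m-n}$. Using bilinearity of $\sigma$ in the first variable together with $\sigma(0,n)=1$ to get $\sigma(-m,n)=\sigma(m,n)^{-1}$, this rewrites as $\sigma(m,n)^{-1}\sigma(p,m+n) e_{p-m-n} = \sigma(m,n)^{-1} U_{k,\sigma}^{n+m} e_p$. A matrix-coefficient computation combined with the skew property $\sigma(n,n)=1$ produces $(U_{k,\sigma}^n)^\ast = U_{k,\sigma}^{-n}$. Plugging the commutation relation into the product $\rho_{k,\sigma}(f)\rho_{k,\sigma}(g)$, reindexing with $q=n+m$ and applying $\sigma(q-n,n)^{-1} = \sigma(n,q)$ (which follows from bilinearity plus $\sigma(n,n)=1$), reproduces exactly the formula $(f\conv{k,\sigma} g)(q) = \sum_n f(n)g(q-n)\sigma(n,q)$, yielding multiplicativity; and $\rho_{k,\sigma}(f^\ast)=\rho_{k,\sigma}(f)^\ast$ follows by combining $(U_{k,\sigma}^n)^\ast = U_{k,\sigma}^{-n}$ with $f^\ast(n)=\overline{f(-n)}$ and relabeling.

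For faithfulness, using $\sigma(0,n)=1$ we compute $\rho_{k,\sigma}(f)e_0 = \sum_{n\in\Z^d_k} f(n)e_{-n}$, which makes sense in $\ell^2(\Z^d_k)$ since $\ell^1(\Z^d_k)\subseteq \ell^2(\Z^d_k)$ for counting measure; its coefficient at $m$ is $f(-m)$, so its vanishing forces $f\equiv 0$. I expect the sole delicate point to be the second step: one must account precisely for bilinearity, $\sigma(0,\cdot)=1$ and the skew condition $\sigma(x,x)=1$ (equivalently $\sigma(x,y)\sigma(y,x)=1$) to obtain exactly the twist $\sigma(n,q)$ that the convolution $\conv{k,\sigma}$ carries; the remaining steps are bookkeeping around this identity.
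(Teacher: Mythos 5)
Your proposal is correct, and its core computation --- verifying the twisted relation $U_{k,\sigma}^nU_{k,\sigma}^m=\sigma(n,m)U_{k,\sigma}^{n+m}$ on the canonical basis vectors --- is exactly the one the paper performs. The difference is that where the paper then simply appeals to the standard theory of integrated $\sigma$-projective representations (citing Zeller-Meier) for multiplicativity, $\ast$-compatibility and faithfulness, you verify these directly: your identities $(U_{k,\sigma}^n)^\ast=U_{k,\sigma}^{-n}$ and $\sigma(q-n,n)^{-1}=\sigma(n,q)$ do reproduce the convolution $\conv{k,\sigma}$ and the involution $f^\ast(n)=\overline{f(-n)}$, and your faithfulness argument, reading off the coefficients of $\rho_{k,\sigma}(f)e_0$ (using $\sigma(0,n)=1$ and $\ell^1\subseteq\ell^2$), is a valid explicit substitute for the paper's citation. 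One small caution: your parenthetical claim that $\sigma(x,x)=1$ is \emph{equivalent} to $\sigma(x,y)\sigma(y,x)=1$ is not quite right for bicharacters in general (e.g.\ $\sigma(x,y)=(-1)^{xy}$ satisfies the latter but not the former); this is harmless here, since the paper's skew bicharacters are lifts of $\exp(i\pi\theta(\cdot,\cdot))$ with $\theta$ skew bilinear and hence do satisfy $\sigma(x,x)=1$, which is the property your computation actually uses --- and indeed must use, since both $(U_{k,\sigma}^n)^\ast=U_{k,\sigma}^{-n}$ and the matching of the twist with $\conv{k,\sigma}$ fail without it.
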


\begin{proof}
For any $n,m \in \Z^d_k$, we have:
\begin{equation*}
\begin{split}
U_{k,\sigma}^n U_{k,\sigma}^m e_j &= U_{k,\sigma}^n \sigma(j,m)e_{j-m} \\
&= \sigma(j-m,n)\sigma(j,m)e_{j-m-n} \\
&= \overline{\sigma(m,n)} \sigma(j,n)\sigma(j,m)e_{j-m-n} = \sigma(n,m)\sigma(j,n+m)e_{j-(m+n)}\\
&= \sigma(n,m) U_{k,\sigma}^{n+m} e_j \text{.}
\end{split}
\end{equation*}

Thus, $n \in \Z^d_k \mapsto U_{k,\sigma}^n$ is a $\sigma$-projective unitary representation of $\Z^d_k$ on $\ell^2\left(\Z^d_k\right)$. Naturally, its integrated representation defines a *-representation for the *-algebra $(\ell^1(\Z^d),\conv{k,\sigma},\cdot^\ast)$; moreover it is faithful in this case: this is a standard result \cite{Zeller-Meier68}.
\end{proof}

\begin{definition}
The C*-algebra $C^\ast\left(\Z^d_k,\sigma\right)$ is the norm closure of $\rho_{k,\sigma}\left(\ell^1\left(\Z^d_k\right)\right)$. 
\end{definition}

\begin{notation}
The norm of $C^\ast\left(\Z^d_k,\sigma\right)$ is denoted by $\|\cdot\|_{k,\sigma}$. In proofs, we will often denote $C^\ast\left(\Z^d_k,\sigma\right)$ simply by $\A_{k,\sigma}$.
\end{notation}

 As a matter of terminology, the C*-algebras $C^\ast\left(\Z^d,\sigma\right)$ are called \emph{quantum tori} for any skew bicharacter $\sigma$ of $\Z^d$. For $k \in \N_\ast^d$, the C*-algebra $C^\ast\left(\Z^d_k,\sigma\right)$ is called a \emph{fuzzy torus}, and are finite sums of matrix algebras.

\begin{notation}
We shall identify $f \in \ell^1\left(\Z^d_k\right)$ with $\rho_{k,\sigma}(f)$ whenever no confusion may arise, for any $(k,\sigma)\in\Xi^d$, since $\rho_{k,\sigma}$ is faithful.
\end{notation}

\begin{remark}\label{full-rmk}
Technically, we have defined the \emph{reduced} twisted C*-algebras of $\Z^d_k$, but since our groups are Abelian, they are amenable, and thus our definition also coincides with the full twisted C*-algebras of $\Z^d_k$. To be somewhat more precise, we recall \cite{Davidson} that the full C*-algebra of $\Z^d_k$ twisted by a skew bicharacter $\sigma$ is the completion of $\ell^1\left(\Z^d_k,\ast_{k,\sigma},\cdot^{\ast}\right)$ for the C*-norm defined by:
\begin{equation*}
\|f\|_{\mathrm{full},k,\sigma} = \sup \left\{ \|\pi(f)\|_{\B(\Hilbert)} : \begin{array}{l}\text{$\Hilbert$ is a Hilbert space and,}\\\text{$\pi$ is a non-degenerate *-representation}\\\text{of $\ell^1\left(\Z^d_k,\ast_{k,\sigma},\cdot^{\ast}\right)$ on $\Hilbert$}\end{array} \right\}\text{,}
\end{equation*}
for all $f\in\ell^1\left(\Z^d_k\right)$. One note that we have proven that there exists at least one *-representation of $\ell^1\left(\Z^d_k,\ast_{k,\sigma},\cdot^{\ast}\right)$, given by $\rho_{k,\sigma}$, and that the norm of $\pi(f)$ is dominated by $\|f\|_1$ for all $f\in\ell^1\left(\Z^d_k\right)$. As $\rho_{k,\sigma}$ in particular is faithful, the C*-seminorm $\|\cdot\|_{\mathrm{full},k,\sigma}$ thus constructed in a norm. The full C*-algebra $C_{\mathrm{full}}^\ast(\Z^d_k,\sigma)$ is the completion of $\ell^1\left(\Z^d_k,\ast_{k,\sigma},\cdot^{\ast}\right)$ for $\|\cdot\|_{\mathrm{full},k,\sigma}$ and is universal by construction for all unitary $\sigma$-representations of $\Z^d_k$. Remarkably, one can prove that, in fact,  $C^\ast\left(\Z^d_k,\sigma\right) = C^\ast_{\mathrm{full}}\left(\Z^d_k,\sigma\right)$ as $\Z^d_k$ is amenable since Abelian \cite{Davidson}. 
\end{remark}

Quantum and fuzzy tori carry natural group actions, from which we shall derive their metric geometry from a kind of transport of structure. For all $\omega = (\omega_1,\ldots,\omega_d) \in \U^d_k$ and $n = (n_1\ldots,n_d) \in \Z^d_k$, we shall denote $\left(\omega_1^{n_1},\ldots,\omega_d^{n_d}\right)$ by $\omega^n$, so that $(\omega,n)\in\U^d_k\times \Z^d_k\mapsto \omega^n$ is the Pontryagin duality pairing. We quote the following two results which define the quantum metric space structure of quantum and fuzzy tori as quantum homogeneous spaces of the torus and its compact subgroups. 

\begin{theorem-definition}\cite{Zeller-Meier68}\label{dual-action-thmdef}
Let $d \in \N_\ast$, $k\in \Nbar^d$ and $\sigma$ a skew-bicharacter of $\Z^d_k$. For all $\omega \in \U^d_k$, there exists a unique *-automorphism $\alpha_{k,\sigma}^\omega$ of $C^\ast\left(\Z^d_k,\sigma\right)$ such that for any $f \in \ell^1(\Z^d_k)$ and any $n \in \Z^d_k$, we have:
\begin{equation*}
\alpha_{k,\sigma}^\omega(f)(n) = \omega^n f(n)\text{.}
\end{equation*}
The map $\omega\in\U^d_k\mapsto \alpha_{k,\sigma}^\omega$ is a strongly continuous action of $\U^d_k$ on $C^\ast\left(\Z^d_k,\sigma\right)$. This action is called the \emph{dual action} of $\U^d_k$ on $C^\ast\left(\Z^d_k,\sigma\right)$.
\end{theorem-definition}

We note that Theorem (\ref{dual-action-thmdef}) follows in part from the universal property of quantum tori and fuzzy tori discussed briefly in Remark (\ref{full-rmk}).

\subsection{Metric Structures on Quantum and Fuzzy Tori}

We now describe how Rieffel endowed quantum and fuzzy tori, and more general ``quantum homogeneous spaces'', with a metric structure, in \cite{Rieffel98a}.

A \emph{length function} $l$ on a group $G$ is a function from $G$ to the nonnegative real numbers such that:
\begin{enumerate}
\item for all $x,y \in G$ we have $l(xy)\leq l(x) + l(y)$,
\item for all $x\in G$, we have $l(x) = 0$ if and only if $x$ is the neutral element in $G$,
\item for all $x\in G$, we have $l\left(x^{-1}\right) = l(x)$.
\end{enumerate}
A length function allows the definition of a left invariant metric on $G$ by setting $x,y\in G \mapsto l(x^{-1}y)$, and conversely a left invariant metric $\mathrm{d}$ on $G$ defines a length function $x\in G\mapsto \mathrm{d}(x,e)$ with $e$ the neutral element of $G$. If $G$ is a compact group with Haar probability measure $\lambda$ and $\mathrm{d}$ is a metric on $G$ which induces its topology, then $x\in G\mapsto \int_G d(gx,g)\,d\lambda(g)$ is a continuous length function on $G$. If moreover, $\tau$ is a compact topology on a group $G$ and $l$ is a continuous length function on $G$ for $\tau$, then the topology generated on $G$ by the distance induced by $l$, being Haus\-dorff and coarser than $\tau$ which is compact, is in fact the topology $\tau$. Thus, every compact metrizable group admits a length function whose distance metrizes the given topology. With this in mind, we have:

\begin{theorem-definition}\cite{Rieffel98a}
Let $d\in \N_\ast$, $k\in \Nbar^d$, $\sigma$ a skew-bicharacter of $\Z^d_k$ and $l$ be a continuous length function on $\U^d_k$. Let $\alpha_{k,\sigma}$ be the dual action of $\U^d_k$ on $C^\ast\left(\Z^d_k,\sigma\right)$. For all $a\in C^\ast\left(\Z^d_k,\sigma\right)$, we set:
\begin{equation*}
\Lip_{l,k,\sigma} (a) = \sup \left\{ \frac{\|a-\alpha_{k,\sigma}^\omega(a)\|_{k,\sigma}}{l(\omega)} : \omega \in \U^d_k \setminus \{1\} \right\} \text{,}
\end{equation*}
where $1$ is the unit of $\U^d_k$. Then $\left(C^\ast\left(\Z^d_k,\sigma\right),\Lip_{l,k,\sigma}\right)$ is a {\Lqcms}.
\end{theorem-definition}

\begin{remark}
Any subgroup of $\Z^d$ is isomorphic to a group $k\Z^d$ for some $k\in \N_\ast^d$, via an invertible $\Z$-module map of $\Z^d$. By functoriality of twisted C*-algebras of groups, we see that we can always adjust the situation so that, whatever subgroups we may consider in $\Z^d$, our current work applies, up to obvious changes to the metric. Our current setup, however, makes the computations more approachable.
\end{remark}

\subsection{A first approximation result}

We conclude this section with a fundamental approximation result. This result was proven in \cite{Latremoliere05}, and it will be quite useful in our current work, so we present briefly the general scheme of its proof. We start with the following definition:

\begin{definition}
Let $d\in\N_\ast$, $k\in\Nbar^d$ and $\sigma$ a skew bicharacter of $\Z^d_k$. For any continuous function $\phi : \U^d_k \rightarrow \R$, we set:
\begin{equation*}
\alpha_{k,\sigma}^\phi : a \in C^\ast\left(\Z^d_k,\sigma\right) \mapsto \int_{\U^d_k} \phi(\omega)\alpha_{k,\sigma}^\omega(a)\,d\lambda_k(\omega)\text{,}\
\end{equation*}
where $\lambda_k$ is the Haar probability measure on $\U^d_k$.
\end{definition}

\begin{theorem}\label{Fejer-approx-thm}
Let $d\in\N_\ast$ and $k\in \Nbar^d$. Let $l$ be a continuous length function on $\U^d_k$. Let $\varepsilon > 0$. There exists a positive, continuous function $\phi : \U^d \rightarrow \R$ and a neighborhood $U$ of $k$ in $\Nbar^d$ such that:
\begin{enumerate}
\item For all $c \in U$, all skew-bicharacter $\sigma$ of $\Z^d_c$ and for all $a\in \sa{\C^\ast(\Z^d_c,\sigma)}$ we have $\|a-\alpha_{c,\sigma}^\phi (a)\|_{c,\sigma} \leq \varepsilon \Lip_{l,c,\sigma}(a)$ and $\Lip_{l,c,\sigma}(\alpha_{c,\sigma}^\phi(a))\leq\Lip_{l,c,\sigma}(a)$,
\item There exists a finite subset $S$ of $\Z^d$ with $0\in S$ such that, for all $c \in U$ and any skew-bicharacter $\sigma$ of $\Z^d_c$, the restriction of the canonical surjection $q_c: \Z^d \rightarrow \Z^d_c$ is injective on $S$ and the range of $\alpha_{c,\sigma}^\phi$ is the span of $\{ U_{c,\sigma}^p : p \in q_c(S) \}$, where the unitaries $U_{c,\sigma}^p$ ($p\in\Z^d_c$) are defined by Equality (\ref{U-eq}) in Theorem (\ref{representation-thm}).
\end{enumerate}
\end{theorem}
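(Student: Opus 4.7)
The key observation is that $\alpha^\phi_{c,\sigma}$ acts as a Fourier multiplier on $C^\ast(\Z^d_c,\sigma)$. Since the dual action from Theorem-Definition \ref{dual-action-thmdef} satisfies $\alpha^\omega_{c,\sigma}(U^n_{c,\sigma}) = \omega^n U^n_{c,\sigma}$, integrating against $\phi$ yields $\alpha^\phi_{c,\sigma}(U^n_{c,\sigma}) = \widehat{\phi}_c(n)\, U^n_{c,\sigma}$, where $\widehat{\phi}_c(n) = \int_{\U^d_c} \phi(\omega)\, \omega^n\, d\lambda_c(\omega)$. The plan is thus to choose $\phi \geq 0$ continuous on $\U^d$ whose restriction to every nearby $\U^d_c$ is a probability density, whose Fourier coefficients vanish outside a fixed finite set, and which is concentrated at the identity with respect to $l$.

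Concretely, I would take $\phi$ to be a product of one-dimensional Fej\'er kernels, $\phi(\omega_1,\ldots,\omega_d) = \prod_{j=1}^d K_{N_j}(\omega_j)$. Each $K_{N_j}$ is a nonnegative trigonometric polynomial with spectrum in $\{-N_j,\ldots,N_j\}$, integral $1$, and arbitrarily $L^1$-concentrated at $1$ as $N_j \to \infty$. I pick each $N_j$ large enough that $\int_{\U^d_k} \phi\cdot l\, d\lambda_k \leq \varepsilon/2$, while also requiring $2N_j + 1 < k_j$ whenever $k_j < \infty$. Setting $S = \prod_j \{-N_j,\ldots,N_j\}$ gives a finite symmetric subset of $\Z^d$ containing $0$. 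I let $U$ be the neighborhood of $k$ in $\Nbar^d$ consisting of those $c$ with $c_j = k_j$ for each $j$ with $k_j < \infty$ and $c_j > 2N_j + 1$ for each $j$ with $k_j = \infty$; for all $c\in U$, the surjection $q_c$ is injective on $S$, and by weak continuity of the Haar measures $\lambda_c$ along the embeddings $\U^d_c \hookrightarrow \U^d$, shrinking $U$ if necessary preserves the bound $\int_{\U^d_c} \phi\cdot l\, d\lambda_c < \varepsilon$.

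Conclusion (1) then follows by a Jensen-style argument: because $\U^d_c$ is abelian, every $\alpha^\omega_{c,\sigma}$ commutes with the rest of the dual action and is a $\|\cdot\|_{c,\sigma}$-isometric $*$-automorphism, so $\Lip_{l,c,\sigma}\circ\alpha^\omega_{c,\sigma} = \Lip_{l,c,\sigma}$. Averaging against the probability density $\phi$ (invoking lower semicontinuity of $\Lip_{l,c,\sigma}$ on the Bochner integral) gives $\Lip_{l,c,\sigma}(\alpha^\phi_{c,\sigma}(a)) \leq \Lip_{l,c,\sigma}(a)$, while
\[
\|a - \alpha^\phi_{c,\sigma}(a)\|_{c,\sigma} \leq \int \phi(\omega)\,\|a - \alpha^\omega_{c,\sigma}(a)\|_{c,\sigma}\,d\lambda_c \leq \Lip_{l,c,\sigma}(a)\int \phi\cdot l\,d\lambda_c \leq \varepsilon\,\Lip_{l,c,\sigma}(a).
\]
Conclusion (2) follows because, writing $\phi = \sum_{p \in S} a_p \chi_p$ on $\U^d$ with strictly positive Fej\'er coefficients $a_p = \prod_j\bigl(1 - |p_j|/(N_j+1)\bigr)$, injectivity of $q_c$ on $S$ makes $\widehat{\phi}_c(n)$ nonzero exactly for $n \in q_c(S)$, so the range of $\alpha^\phi_{c,\sigma}$ is the span of $\{U^n_{c,\sigma} : n \in q_c(S)\}$.

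The main technical obstacle is coordinating the construction over a single neighborhood of $k$ when $k$ has a mix of finite and infinite coordinates. The topology of $\Nbar$ forces $c_j = k_j$ for finite $k_j$, so the spectral cutoffs $N_j$ in those coordinates must be fixed beforehand to respect $2N_j + 1 < k_j$, while infinite components impose no such constraint but demand quantitative lower bounds on $c_j$. The product-Fej\'er construction is tailored to decouple these coordinate-wise constraints while simultaneously producing a $\phi$ whose $L^1$-mass against $l$ is uniformly small across $U$.
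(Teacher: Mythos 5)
Your multiplier identity, the Jensen-type averaging estimate, the normalization of $\int_{\U^d_c}\phi\,d\lambda_c$ via injectivity of $q_c$ on $S$, and the continuity of $c\mapsto\int_{\U^d_c}\phi\,l\,d\lambda_c$ all match the paper's argument; the gap is in the choice of $\phi$ when $k$ has a finite coordinate. If $k_j<\infty$, your constraint $2N_j+1<k_j$ caps $N_j$, and then the restriction of the Fej\'er kernel $K_{N_j}$ to the finite subgroup $\U_{k_j}$ cannot concentrate: its mass at the identity point is $\tfrac{1}{k_j}K_{N_j}(1)=\tfrac{N_j+1}{k_j}\leq\tfrac12+\tfrac{1}{2k_j}$, so the set $\{\omega\in\U^d_k:\omega_j\neq 1\}$ (which is clopen, hence compact, and on which $l\geq\delta_j>0$) carries $\phi$-mass at least $\tfrac12-\tfrac{1}{2k_j}\geq\tfrac14$. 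Hence $\int_{\U^d_k}\phi\,l\,d\lambda_k\geq\delta_j/4$ no matter how you choose the $N_i$, and the step ``pick each $N_j$ large enough that $\int_{\U^d_k}\phi\cdot l\,d\lambda_k\leq\varepsilon/2$'' is impossible for small $\varepsilon$. This is not merely a failure of the estimate: conclusion (1) itself fails for your $\phi$, because $2N_j+1<k_j$ leaves residue classes mod $k_j$ outside $q_k(S)$, and for any $p$ in such a class the element $a=U^p_{k,\sigma}+(U^p_{k,\sigma})^\ast$ has $\alpha^\phi_{k,\sigma}(a)=0$ while $\Lip_{l,k,\sigma}(a)<\infty$, so $\|a-\alpha^\phi_{k,\sigma}(a)\|_{k,\sigma}\leq\varepsilon\Lip_{l,k,\sigma}(a)$ is violated once $\varepsilon$ is small. (Allowing $2N_j+1=k_j$ does not help: the Fej\'er coefficients decay linearly, so the restricted kernel is still spread out.) Your construction is correct in the special case $k=\infty^d$, but the theorem is stated for all $k\in\Nbar^d$.

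The paper avoids this by building $\phi$ as a nonnegative finite linear combination of characters of the subgroup $\U^d_k$ itself (its Fej\'er-type Lemma \ref{Fejer-lemma}, quoted from the earlier paper with $G=\U^d_k$): in a finite coordinate the full dual group $\Z_{k_j}$ is available, so arbitrary (indeed perfect) concentration at the identity of $\U_{k_j}$ is possible. The set $S$ is then the support of $\widehat{\phi}$ inside $\Z^d_k$, lifted injectively to $\Z^d$, and the neighborhood $U$ is obtained by combining the continuity of $c\mapsto\int_{\U^d_c}\phi\,l\,d\lambda_c$ (your weak-continuity remark, the paper's Lemma \ref{measure-cv-lemma}) with a divisibility-plus-injectivity condition; since finite coordinates are isolated in $\Nbar$, nearby $c$ satisfy $c_j=k_j$ there and nothing further is needed. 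You can repair your proposal along the same lines by replacing, in each finite coordinate, $K_{N_j}$ by the Fej\'er kernel of full order $k_j-1$ (whose restriction to $\U_{k_j}$ equals $k_j$ at $1$ and $0$ elsewhere, and which is still nonnegative on $\U$) and taking for $S$ a transversal lift of the resulting spectrum modulo $k$, with injectivity of $q_c$ on $S$ checked only for the nearby $c$; at that point you have essentially reproduced the paper's proof.
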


Due to the importance of this result, we provide a sketch of its proof based upon the following three lemmas, proven in greater generality in \cite{Latremoliere05}. 

\begin{lemma}\label{Fejer-lemma}
Let $d\in \N_\ast$ and $k\in \Nbar^d$. Let $f : \U^d_k \rightarrow \C$ be a continuous function such that $f(1,\ldots,1) = 0$. Let $\varepsilon > 0$. There exists a finite linear combination $\phi$ of characters of $\U^d_k$, including the trivial character, such that $\phi \geq 0$, $\int_{\U^d_k} \phi\,d\lambda_k = 1$ and $\int_{\U^d_k} \phi |f| \,d\lambda_k \leq \varepsilon$.
\end{lemma}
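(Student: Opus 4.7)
The plan is to build $\phi$ as a product of classical one-dimensional approximate identities, one for each factor of $\U^d_k = \prod_{j=1}^d G_j$, where $G_j = \U$ when $k_j = \infty$ and $G_j$ is the group of $k_j$-th roots of unity when $k_j < \infty$. Since $f$ is continuous on the compact group $\U^d_k$ with $f(1,\ldots,1) = 0$, setting $M = \|f\|_\infty + 1$, for every $\eta > 0$ there is an open neighborhood $V$ of $(1,\ldots,1)$ on which $|f| \leq \eta$. I would then select a positive trigonometric polynomial $\phi$ concentrating enough mass in $V$ that, roughly, $\int_V \phi |f|\,d\lambda_k \leq \eta$ while $\int_{V^c}\phi|f|\,d\lambda_k \leq M\cdot\lambda_k\text{-mass outside }V$ can be made arbitrarily small.

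On each factor, I use standard positive approximate identities. For a circle factor $G_j = \U$, the Fejer kernel
\[
F_{N}(z) = \sum_{|m| < N}\left(1 - \frac{|m|}{N}\right) z^{m}
\]
is nonnegative, is a finite linear combination of characters of $\U$ (including the trivial one), integrates to $1$, and satisfies $\int_{\{|z-1|\geq \delta\}} F_{N}\,d\lambda \to 0$ as $N\to\infty$ for every $\delta > 0$. For a finite cyclic factor $G_j$ of order $k_j$, the function $\psi_j = \sum_{\chi \in \widehat{G_j}} \chi$ equals $k_j$ at the identity and $0$ elsewhere by orthogonality of characters; it is therefore nonnegative, integrates to $1$ against normalized Haar measure, is a combination of characters including the trivial one, and is already supported at the identity. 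Setting $\phi(\omega_1,\ldots,\omega_d) = \prod_{j=1}^d \phi_j(\omega_j)$ with $\phi_j = F_{N_j}$ in the infinite case and $\phi_j = \psi_j$ in the finite case yields a nonnegative finite combination of characters of $\U^d_k$ (since characters of a product of locally compact abelian groups are products of characters), which includes the trivial character and integrates to $1$ by Fubini.

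To conclude, given $\varepsilon > 0$, first fix such a $V$ with $\eta = \varepsilon/(2M)$ (for instance). Then, using that each Fejer factor concentrates at $1$ and that the finite-group factors are already supported at $1$, choose $N_j$ large enough for every infinite coordinate $j$ so that $\int_{V^c}\phi\,d\lambda_k \leq \varepsilon/(2M)$. Splitting $\int_{\U^d_k}\phi|f|\,d\lambda_k = \int_V\phi|f|\,d\lambda_k + \int_{V^c}\phi|f|\,d\lambda_k$ gives a bound of $\eta\int_V\phi\,d\lambda_k + M\int_{V^c}\phi\,d\lambda_k \leq \varepsilon/2 + \varepsilon/2 = \varepsilon$, as required. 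The only nontrivial ingredient is the concentration estimate for the Fejer kernel, which is the standard Fejer--Lebesgue approximate identity result from classical Fourier analysis; nothing in the argument depends on whether individual coordinates are compact or discrete, which is why the unified statement for $k\in\Nbar^d$ goes through.
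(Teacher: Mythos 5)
Your construction is correct, and it takes a genuinely different route from the paper. The paper does not reprove the statement at all: it invokes \cite[Lemma 3.1]{Latremoliere05}, a Fej\'er-type approximate-identity lemma valid for an arbitrary compact group $G$, simply specialized to $G = \U^d_k$; that argument is abstract (Urysohn plus density of trigonometric polynomials via Peter--Weyl/Stone--Weierstrass, with a positivity trick) and makes no use of the particular structure of $\U^d_k$. You instead exploit the product structure $\U^d_k \cong \prod_j G_j$ with each $G_j$ either a circle or a finite cyclic group, and build $\phi$ explicitly as a tensor product of classical Fej\'er kernels $F_{N_j}$ on the circle factors and the full character sums $\sum_{\chi\in\widehat{G_j}}\chi = k_j\,\mathds{1}_{\{1\}}$ on the finite factors. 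All the required properties check out: each factor is a nonnegative finite combination of characters with unit integral and nonzero trivial-character coefficient, these properties pass to the product by Fubini and by $\widehat{G_1\times G_2}=\widehat{G_1}\times\widehat{G_2}$, and the splitting $\int_V + \int_{V^c}$ with $\eta = \varepsilon/(2M)$, $M = \|f\|_\infty + 1 \geq 1$ gives the bound. The only step you leave implicit is routine: shrink $V$ to a product neighborhood $\prod_j V_j$ and control $\int_{V^c}\phi\,d\lambda_k$ by the union bound $\sum_j \int_{V_j^c}\phi_j\,d\lambda_j$ (each finite factor contributes $0$, each circle factor is handled by the standard Fej\'er concentration estimate). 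What your approach buys is an explicit, quantitative kernel --- very much in the spirit of the paper's stated goal of replacing abstract existence arguments by concrete constructions --- at the cost of being tied to this specific family of groups, whereas the cited lemma applies to any compact group.
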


\begin{proof}
See \cite[Lemma 3.1]{Latremoliere05}, where the compact group $G$ is chosen to be $\U^d_k$.
\end{proof}

\begin{lemma}\label{range-lemma}
Let $d\in\N_\ast$ and $(k,\sigma)\in\Xi^d$. If $\phi$ is a linear combination of characters of $\U^d_k$, then the range of $\alpha_{k,\sigma}^\phi$ is the span of $\{ U^p_{k,\sigma} : \widehat{\phi}(p) \not=0 \}$, where $\widehat{\phi}$ is the Fourier transform of $\phi$. In particular, $\alpha_{k,\sigma}^\phi$ has finite rank.
\end{lemma}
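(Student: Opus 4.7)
The plan is to compute the effect of $\alpha^\phi_{k,\sigma}$ on the generating unitaries $U^n_{k,\sigma}$ ($n\in\Z^d_k$) and then extend by linearity, density, and continuity.

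First, since $U^n_{k,\sigma}=\rho_{k,\sigma}(\delta_n)$ for the delta function $\delta_n\in\ell^1(\Z^d_k)$, Theorem-Definition \ref{dual-action-thmdef} gives $\alpha^\omega_{k,\sigma}(U^n_{k,\sigma})=\omega^n U^n_{k,\sigma}$ for every $\omega\in\U^d_k$. Consequently, by the defining integral formula for $\alpha^\phi_{k,\sigma}$,
\begin{equation*}
\alpha^\phi_{k,\sigma}(U^n_{k,\sigma}) \;=\; \left(\int_{\U^d_k}\phi(\omega)\,\omega^n\,d\lambda_k(\omega)\right) U^n_{k,\sigma},
\end{equation*}
and the scalar factor is, up to the usual sign convention adopted for the Fourier transform on $\U^d_k$, precisely $\widehat{\phi}(n)$. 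Thus each $U^n_{k,\sigma}$ is an eigenvector of $\alpha^\phi_{k,\sigma}$ with eigenvalue $\widehat{\phi}(n)$, and the eigenvalue vanishes whenever $n$ lies outside the (necessarily finite) support $F\subseteq\Z^d_k$ of $\widehat{\phi}$.

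Second, extending linearly, $\alpha^\phi_{k,\sigma}$ maps the dense *-subalgebra $\rho_{k,\sigma}(\ell^1(\Z^d_k))$ into the finite-dimensional subspace $V=\mathrm{span}\{U^p_{k,\sigma}:p\in F\}$. To extend to the whole C*-algebra, I would observe that $\alpha^\phi_{k,\sigma}$ is a bounded operator on $C^\ast(\Z^d_k,\sigma)$, with operator norm at most $\|\phi\|_{L^1(\U^d_k)}$, because each $\alpha^\omega_{k,\sigma}$ is an isometric *-automorphism and the defining integral can be read as a Bochner integral in $C^\ast(\Z^d_k,\sigma)$ thanks to the strong continuity of $\omega\mapsto\alpha^\omega_{k,\sigma}$ furnished by Theorem-Definition \ref{dual-action-thmdef}. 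Since $V$ is finite-dimensional, hence closed, the image of the entire C*-algebra lies in $V$. The reverse inclusion is immediate: for each $p\in F$, the element $\widehat{\phi}(p)^{-1}U^p_{k,\sigma}$ is mapped by $\alpha^\phi_{k,\sigma}$ to $U^p_{k,\sigma}$, so every spanning vector of $V$ lies in the range. The finite-rank conclusion then follows at once from finite-dimensionality of $V$.

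There is no genuine obstacle here beyond bookkeeping. The only point worth care is the interchange of the integral with the *-representation and the C*-norm closure, which is precisely what the Bochner-integral interpretation, together with boundedness of $\alpha^\phi_{k,\sigma}$, handles cleanly; everything else is the eigenvector computation on the dense *-subalgebra of trigonometric polynomials and the trivial observation that closed finite-dimensional subspaces are preserved under bounded extension from a dense subspace.
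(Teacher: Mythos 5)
Your proof is correct and is essentially the argument the paper delegates to the cited reference (\cite[Lemma 3.2]{Latremoliere05}): the dual action makes each $U^n_{k,\sigma}$ an eigenvector, so $\alpha^\phi_{k,\sigma}(U^n_{k,\sigma})=\widehat{\phi}(n)U^n_{k,\sigma}$, and boundedness of the Bochner integral plus density of the trigonometric polynomials and closedness of the finite-dimensional span give both inclusions. The only point to watch is the sign convention in the Fourier transform (your scalar is $\widehat{\phi}(-n)$ under the usual convention), which you flag and which is harmless here since the relevant $\phi$ are real-valued, making the support of $\widehat{\phi}$ symmetric.
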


\begin{proof}
See \cite[Lemma 3.2]{Latremoliere05}, with $\Sigma = \U^d_k$.
\end{proof}

\begin{lemma}\label{measure-cv-lemma}
Let $d\in \N_\ast$ and $k\in \Nbar^d$. If $f : \U^d_k \rightarrow \C$ is a continuous function, then:
\begin{equation*}
\lim_{c\rightarrow k} \int_{\U^d_c} f\,d\lambda_c = \int_{\U^d_k} f\,d\lambda_k\text{,}
\end{equation*}
where $\lambda_c$ is meant for the probability Haar measure on $\U^d_c$ for all $c\in \Nbar^d$.
\end{lemma}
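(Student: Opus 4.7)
The plan is to reduce to the case of a single character of $\U^d_k$ via Stone--Weierstrass, and then verify the convergence by an elementary divisibility computation. First, for $c$ sufficiently close to $k$ in $\Nbar^d$, the group $\U^d_c$ sits inside $\U^d_k$: indeed, if $k_j\in\N_\ast$ then $c_j=k_j$ eventually (since $\N_\ast$ is discrete in $\Nbar$), while if $k_j=\infty$ then $\U_{k_j}=\U$ already contains every $\U_{c_j}$. Hence $f|_{\U^d_c}$ is well defined and all the integrals make sense.

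Since $\U^d_k$ is a compact abelian group, the $*$-algebra generated by its characters is conjugation-closed and separates points, so by Stone--Weierstrass it is uniformly dense in $C(\U^d_k,\C)$. Because $\left|\int_{\U^d_c} g\dif\lambda_c\right|\leq\|g\|_\infty$ for any $c$ and any bounded measurable $g$ on $\U^d_k$, a standard approximation argument reduces the claim to proving, for each single character $\chi_m:\omega\mapsto\omega^m$ (lifted from a class in $\Z^d_k$ to some $m\in\Z^d$), that $\int_{\U^d_c}\chi_m\dif\lambda_c\to\int_{\U^d_k}\chi_m\dif\lambda_k$ as $c\to k$.

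By character orthogonality on compact abelian groups,
\begin{equation*}
\int_{\U^d_c}\chi_m\dif\lambda_c=\begin{cases} 1 & \text{if } \chi_m|_{\U^d_c}\equiv 1,\\ 0 & \text{otherwise,}\end{cases}
\end{equation*}
and likewise for $\U^d_k$. Working coordinate by coordinate, I need only match the two triviality conditions: if $k_j\in\N_\ast$ then $c_j=k_j$ eventually, so $\U_{c_j}=\U_{k_j}$ and there is nothing to check in that coordinate; if $k_j=\infty$ then $\chi_m$ is trivial on $\U_{k_j}=\U$ iff $m_j=0$, while on $\U_{c_j}$ it is trivial iff $c_j\mid m_j$. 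Since $c_j\to\infty$ in $\Nbar$ means either $c_j=\infty$ eventually or $c_j\in\N_\ast$ with $c_j\to\infty$, for the fixed integer $m_j$ we have $c_j>|m_j|$ (or $c_j=\infty$) eventually, so $c_j\mid m_j$ iff $m_j=0$. Thus the two indicators coincide for $c$ in a neighborhood of $k$.

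Combining: given $\varepsilon>0$ and $f\in C(\U^d_k,\C)$, pick $P=\sum_{m\in F}a_m\chi_m$ with $F\subset\Z^d$ finite and $\|f-P\|_\infty<\varepsilon$. The previous paragraph, applied to each $m\in F$, yields a neighborhood $U$ of $k$ on which $\int_{\U^d_c}P\dif\lambda_c=\int_{\U^d_k}P\dif\lambda_k$; a triangle inequality then gives $\left|\int_{\U^d_c}f\dif\lambda_c-\int_{\U^d_k}f\dif\lambda_k\right|\leq 2\varepsilon$ for $c\in U$. The only genuinely nontrivial point is the equidistribution observation for the $k_j=\infty$ coordinates --- essentially the classical statement that uniform averages over $n$-th roots of unity converge weak-$\ast$ to Haar measure on $\U$ as $n\to\infty$ --- and here it collapses to the trivial remark that $c_j$ eventually exceeds the fixed integer $|m_j|$.
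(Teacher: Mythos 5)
Your argument is correct. The one point that needs care --- making sense of $\int_{\U^d_c} f\dif\lambda_c$ when $f$ is only defined on $\U^d_k$ --- you handle properly: for $c$ in a small enough neighborhood of $k$ in $\Nbar^d$, each finite coordinate forces $c_j=k_j$ (finite points of $\Nbar$ are isolated) and each infinite coordinate gives $\U_{c_j}\subseteq\U=\U_{k_j}$, so $\U^d_c\subseteq\U^d_k$ eventually. The reduction by Stone--Weierstrass to a single character $\chi_m$, the orthogonality relation $\int_G\chi\dif\lambda\in\{0,1\}$ according to whether $\chi$ is trivial on $G$, and the coordinatewise divisibility check (for fixed $m_j$, eventually $c_j>|m_j|$ or $c_j=\infty$, so $c_j\mid m_j$ iff $m_j=0$) are all sound, and the final $2\varepsilon$ estimate is standard.

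Your route differs from the paper's, which offers no argument at all here: it simply invokes \cite[Lemma 3.6]{Latremoliere05}, a statement proved there in greater generality for compact groups (essentially a weak* continuity result for Haar measures of a converging family of closed subgroups, resting on invariance and uniqueness of Haar measure rather than on harmonic analysis). Your proof instead exploits the abelian structure of the tori: Pontryagin duality and character orthogonality collapse the statement to a finite divisibility computation, which is the classical equidistribution of roots of unity in disguise. What you lose is generality --- the argument would not transfer to nonabelian compact groups, where the cited lemma still applies --- but what you gain is a short, self-contained, and completely explicit proof within the setting the paper actually needs, and one which dovetails naturally with the character-based Fej{\'e}r argument of Lemma (\ref{Fejer-lemma}) and Lemma (\ref{range-lemma}) surrounding it.
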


\begin{proof}
This is \cite[Lemma 3.6]{Latremoliere05} in the case of $G = \U_k$.
\end{proof}

\begin{proof}[Sketch of proof of Theorem (\ref{Fejer-approx-thm})]
Let $\varepsilon > 0$. By Lemma (\ref{Fejer-lemma}), there exists a finite linear combination $\phi$ of characters of $\U^d_k$ such that $\phi\geq 0$, $\int_{\U^d_k}\phi \,d\lambda_k = 1$ and:
\begin{equation*}
\int_{\U^d_k}\phi(\omega)l(\omega)\,d\lambda_k(\omega) < \frac{1}{2}\varepsilon\text{.}
\end{equation*}
Let $S = \{ p \in \Z_k^d : \widehat{\phi} (p) \not= 0\}$, where $\widehat{\phi} : \Z_k^d \rightarrow\C$ is the Fourier transform on $\phi$. Note that $0 \in S$ and $S$ is finite.

By Lemma (\ref{measure-cv-lemma}), there exists a neighborhood $U_0$ of $k$ in $\Nbar^d$ such that:
\begin{equation*}
\int_{\U^d_c} \phi(\omega)l(\omega) \,d\lambda_c(\omega) \leq \varepsilon\text{.}
\end{equation*}

For any $c =(c_1,\ldots,c_d),k=(k_1,\ldots,k_d)\in \Nbar^d$, we set $c\mid k$ when $c_j$ divides $k_j$ for all $j=1,\ldots,d$, with the convention that all $a\in \Nbar$ divide $\infty$. With these notations in mind, we define:
\begin{equation*}
V_0 = \left\{ c \in \Nbar^d : c|k \right\}\text{.}
\end{equation*}
For each $c \in V_0$, the group $\Z_c^d$ is a quotient group of $\Z_k^d$. Letting $q_c$ be the canonical surjection from $\Z^d_k$ onto $\Z^d_c$, we define:
\begin{equation*}
V = \left\{ c \in V_0 : \text{$q_c$ is injective on $S$} \right\}\text{.}
\end{equation*}
The set $V$ is easily checked to be open in $\Nbar^d$, and it contains $k$, so the set $U = U_0\cap V$ is an open neighborhood of $k$ in $\Nbar^d$.

Let $c\in U$ and $\sigma$ be a skew bicharacter of $\Z^d_c$. Since $c \in V$ in particular, $\U_c^d\subseteq \U_k^d$, and thus the restriction of $\phi$ to $\U_c^d$ is still a linear combination of characters of $\U_c^d$. Moreover, again by definition of $V$, the group $\Z^d_c$ is a quotient of $\Z^d_k$ and the canonical surjection is injective on $S$. By Lemma (\ref{range-lemma}), the operator $\alpha_{c,\sigma}^\phi$ has range the span of $\{ U^p_{c,\sigma} : p \in S\}$, and since $\phi$ is positive, the linear map $\alpha^\phi_{c,\sigma}$ is positive. Let $a\in \A_{c,\sigma}$.
\begin{equation*}
\begin{split}
\|a-\alpha_{c,\sigma}^\phi(a)\|_{c,\sigma} & \leq \int_{\U^d_c} \|a-\alpha_{c,\sigma}^\omega(a)\|_{c,\sigma}\phi(\omega)\,d\lambda_c(\omega)\\
&= \int_{\U^d_c} \frac{\|a-\alpha_{c,\sigma}^\omega(a)\|_{c,\sigma}}{l(\omega)}l(\omega)\phi(\omega)\,d\lambda_c(\omega)\\
&\leq \Lip_{l,c,\sigma}(a) \int_{\U^d_c}\phi(\omega)l(\omega)\,d\lambda_c(\omega)\\
&\leq \varepsilon \Lip_{l,c,\sigma}(a)\text{.}
\end{split}
\end{equation*}
Moreover, again using the fact that $c\in V$, we note that if $\widehat{\phi}_c$ is the Fourier transform of the restriction of $\phi$ to $\U_c^d$, then we have $\widehat{\phi}_c \circ q_c = \widehat{\phi}_k$ where $q_c  :\Z_k^d \rightarrow \Z_c^d$ is the canonical surjection (this follows from the fact that $q_c$ is injective on the support $S$ of $\phi$). From this, we can conclude that:
\begin{equation*}
1 = \int_{\U_k^d} \phi\,d\lambda_k = \widehat{\phi}_k(0) = \widehat{\phi}_c(0) = \int_{\U_c^d}\phi\,d\lambda_c \text{.}
\end{equation*}

Using lower-semi-continuity of the Lip-norm $\Lip_{l,c,\theta}$ as well as its invariance under the dual action :
\begin{equation*}
\begin{split}
\Lip_{l,c,\sigma}(\alpha^\phi(a))&\leq \int_{\U^d_c} \phi(\omega)\Lip_{l,c,\sigma}(\alpha_{c,\theta}^\omega(a))\,d\lambda_c(\omega)\\
&=\Lip_{l,c,\sigma}(a)\int_{\U_c^d}\phi\,d\lambda_c = \Lip_{l,c,\theta}(a)\text{,}
\end{split}
\end{equation*}
which proves our theorem.
\end{proof}

\section{Continuous fields}

An important feature of the quantum and fuzzy tori is that they can be seen as fibers of a single continuous field of C*-algebras \cite[Corollary 2.9]{Latremoliere05}. For our purpose, we need a concrete construction of such a field, which goes beyond our groupoid-based construction in \cite{Latremoliere05}. In this section, for a fix $d\in \Nbar^d$, we introduce faithful non-degenerate *-representations on the Hilbert space $\ell^2(\Z^d)$ of the C*-algebras $\C^\ast\left(\Z^d_k,\sigma\right)$ for $(k,\sigma)\in\Xi^d$, in such a way that, informally, these representations are pointwise continuous for the strong operator topology. The precise statement will require some notations, to be found in this section. We also note that our efforts in this section will allow to define unital *-monomorphism from all the quantum tori and fuzzy tori into a single C*-algebra, namely the C*-algebra $\B^d$ of all bounded linear operators on $\ell^2\left(\Z^d\right)$, and these *-morphisms will be part of our bridge constructions when we work with the quantum propinquity later on.

\subsection{Representations on a fixed Hilbert Space}

For all $x\in\R$, the integers $\lfloor x\rfloor$ and $\lceil x \rceil$ are, respectively, the largest integer smaller than $x$ and the smallest integer larger than $x$. Moreover, we use the conventions $\lfloor\pm \infty\rfloor = \lceil\pm\infty\rceil = \pm\infty$, and $\pm\infty+n=n+\pm\infty=m(\pm\infty)=\pm \infty$ for all $n\in\R$ and $m\in (0,\infty)\in\R$.

We start with the following simple object:

\begin{notation}
Let $d \in \N_\ast$ and $k = (k_1,\ldots,k_d) \in \Nbar$. Let:
\begin{equation*}
I_k = \prod_{j=1}^d \left\{ \left\lfloor \frac{1-k_j}{2} \right\rfloor,\ldots,\left\lfloor \frac{k_j-1}{2} \right\rfloor\right\}\text{.}
\end{equation*}
\end{notation}

We observe that, by construction, the set:
\begin{equation}\label{partition-eq}
\mathscr{P}_k = \left\{ I_k + n : n \in k\Z^d \right\}
\end{equation}
is a partition of $\Z^d$. This is not the usual partition of $\Z^d$ in cosets of $k\Z^d$, but we will find it a bit more convenient (though one could, at the expense of worse notations later on, work with the standard partition of $\Z^d$ in cosets of $k\Z^d$).

Fix $d\in\N_\ast$ and $k \in \Nbar^d$. The canonical surjection $q_k : \Z^d \rightarrow \Z^d_k$ restricts to a bijection from $I_k$ onto $\Z^d_k$. We thus can define an isometric embedding $\vartheta_k : \ell^2\left(\Z^d_k\right)\rightarrow \ell^2(\Z^d)$ by setting for all $\xi \in \ell^2\left(\Z^d_k\right)$:
\begin{equation}\label{vartheta-embedding-def}
\vartheta_k(\xi) : n\in \Z^d \longmapsto \begin{cases}\xi(q_k(n)) \text{ if $n\in I_k$}\\0 \text{ otherwise.}\end{cases}
\end{equation}
Since $\vartheta_k$ is an isometry by construction, $\vartheta_k^\ast \vartheta_k$ is the identity of $\ell^2\left(\Z^d_k\right)$. Therefore, for all skew bicharacter $\sigma$ of $\Z^d_k$, the map $\vartheta_k \pi_{k,\sigma}(\cdot)\vartheta_k^\ast$ is a \emph{non-unital} *-representation of $C^\ast\left(\Z^d_k,\sigma\right)$ on $\ell^2(\Z^d)$. To construct a non-degenerate representation (or, equivalently, unital *-monomorphisms), we proceed as follows. Since $\mathscr{P}_k$, defined by Equation (\ref{partition-eq}), is a partition of $\Z^d$, we have the following decomposition of $\ell^2(\Z^d)$ in a Hilbert direct sum:
\begin{equation}\label{l2-decomposition-k-eq}
\ell^2(\Z^d) = \bigoplus_{n \in k\Z^d} \overline{\operatorname{span} \{ e_j : j \in I_k + n \}}
\end{equation}
with $(e_j)_{j\in\Z^d}$ the canonical basis of $\ell^2(\Z^d)$ given by $e_m(n)\in\{0,1\}$ and $e_m(n) = 1$ if and only if $n=m$, for all $m,n\in\Z^d$. Note that the range of $\vartheta_k$  is $\overline{\operatorname{span} \{ e_j : j \in I_k  \}}$.

For all $n \in k\Z^d$, let 
\begin{equation*}
u_n : \overline{\operatorname{span}\{e_j : j \in I_k\}} \longrightarrow \overline{\operatorname{span}\{e_j : j \in I_k + n\}}
\end{equation*}
be the unitary defined by extending linearly and continuously the map:
\begin{equation*}
e_j \in \{ e_m : m \in I_k \} \longmapsto e_{j+n}\text{.}
\end{equation*}

We now define:

\begin{notation}\label{rep}
Let $d\in \N_\ast$ and $k \in \Nbar^d$. Let $\sigma$ be a skew-bicharacter of $\Z^d_k$, and $\rho_{k,\sigma}$ the representation on $\left(\ell^1\left(\Z^d_k\right),\conv{k,\sigma},\cdot^\ast\right)$ on $\ell^2(\Z^d)$ defined by Theorem (\ref{representation-thm}). 

Let $\xi \in \ell^2(\Z^d)$, and write $\xi = \sum_{j \in k\Z^d} \xi_j$ with $\xi_j \in \overline{\operatorname*{span}\{e_m : m \in I_k + j \}}$. Such a decomposition is unique by Equation (\ref{l2-decomposition-k-eq}). Define for all $a\in C^\ast(\Z^d_k,\sigma)$:
\begin{equation}\label{rep-eq}
\pi_{k,\sigma}(a)\xi = \sum_{j\in k\Z^d} u_j\vartheta_k \rho_{k,\sigma}(a)\vartheta_k^\ast u_j^\ast \xi_j \text{,}
\end{equation}
which is well-defined since $\|u_n\vartheta_k \rho_{k,\sigma}(a)\vartheta_k^\ast u_j^\ast \xi_j\|_2\leq\|a\|_{k,\sigma} \|\xi_j\|_2$ for all $j\in k\Z^d$, and $\sum_{j\in kZ^d}\|\xi_j\|^2_2 = \|\xi\|_2^2 < \infty$ by definition of $\xi$.

It is easy to check that $\pi_{k,\sigma}$ thus defined is a faithful, non-degenerate (i.e. unital) *-representation of $C^\ast(\Z_k^d,\sigma)$ on $\ell^2(\Z^d)$, which acts ``diagonally'' in the decomposition of $\ell^2(\Z^d)$ given by Equation (\ref{l2-decomposition-k-eq}).
\end{notation}

\begin{remark}
We wish to emphasize that the only reason to work with the non-degenerate representations $\pi_{k,\sigma}$ instead of $\operatorname{Ad}_{\vartheta_{k,\sigma}}\circ\rho_{k,\sigma}$ ($(k,\sigma)\in\Xi^d$) is that they can be seen as \emph{unital} *-monomorphisms from the quantum and fuzzy tori into $\B^d$, which is an essential tool to build bridges as defined in Definition (\ref{bridge-def}).
\end{remark}

The representations introduced in Notation (\ref{rep}) enjoy an important topological property with respect to the strong operator topology of $\ell^2(\Z^d)$. To make this statement precise, we first choose a way to see an element of $\ell^1(\Z^d)$ as an element of $\ell^1\left(\Z^d_k\right)$ for any $k\in\Nbar^d$ as follows.

\begin{notation}\label{canonical-basis-notation}
For any $d\in \N_\ast$, $k\in\Nbar^d$, and $m\in\Z^d_k$, we define the element $\delta_m$ of $\ell^1(\Z_k^d)$ by:
\begin{equation*}
\delta_m : n \in \Z^d_k \longmapsto \begin{cases}
1 \text{ if $n=m$,}\\
0 \text{ otherwise.}
\end{cases}
\end{equation*}
Note that we defined the canonical basis $(e_n)_{n\in\Z^d}$ of $\ell^2(\Z^d)$ in such a way that $e_n=\delta_n$ for all $n\in\Z^d$, so one may find our present notation redundant. However, it will make our exposition much clearer if we distinguish between the vectors $e_n \in \ell^2(\Z^d)$ and the functions $\delta_n \in \ell^1(\Z^d)$ for all $n\in\Z^d$.
\end{notation}

\begin{lemma}\label{upsilon-lemma}
Let $d \in \N_\ast$ and $k \in \Nbar^d$. Let $q_k : \Z^d \rightarrow \Z^d_k$ be the canonical surjection. For any $f \in \ell^1(\Z^d)$ and any $n \in \Z^d$, we define:
\begin{equation*}
\upsilon'_k (f) (n) = \sum_{m \in k\Z^d} f( m + n ) \in \C \text{.}
\end{equation*}
Then $\upsilon_k'(f)(n) = \upsilon_k'(f)(m)$ for any $n,m\in\Z^d$ with $q_k(n)=q_k(m)$. Thus there exists a unique function $\upsilon_k(f) : \Z^d_k \rightarrow \C$ such that $\upsilon_k(f) \circ q_k = \upsilon_k'(f)$.

The map $\upsilon_k$ is a linear surjection of norm 1 such that $\upsilon_k(f^\ast) = \upsilon_k(f)^\ast$ for all $f\in\ell^1\left(\Z^d\right)$. Moreover, the restriction of $\upsilon_k$ to the functions in $\ell^1(\Z^d)$ supported on $I_k$ is injective.
\end{lemma}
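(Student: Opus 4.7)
The plan is to verify each assertion directly from the defining formula for $\upsilon_k'$, systematically exploiting the partition $\mathscr{P}_k = \{I_k + n : n \in k\Z^d\}$ of $\Z^d$ from Equation~(\ref{partition-eq}) and the fact that $q_k$ restricts to a bijection of $I_k$ onto $\Z^d_k$.

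For well-definedness of $\upsilon_k$, if $q_k(n) = q_k(m)$ then $n - m \in k\Z^d$, and the bijection $p \mapsto p + (n - m)$ of $k\Z^d$ transforms the sum defining $\upsilon_k'(f)(n)$ into the one defining $\upsilon_k'(f)(m)$. Thus $\upsilon_k'(f)$ factors uniquely through $q_k$, yielding $\upsilon_k(f)$; linearity of $\upsilon_k$ is inherited from that of $\upsilon_k'$.

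For the norm bound, each $n \in \Z^d$ decomposes uniquely as $n = j + p$ with $j \in I_k$ and $p \in k\Z^d$, and $q_k$ identifies $I_k$ with $\Z^d_k$ as a set, so I would estimate:
\[
\|\upsilon_k(f)\|_1 = \sum_{j \in I_k} \left| \sum_{p \in k\Z^d} f(j + p) \right| \leq \sum_{j \in I_k} \sum_{p \in k\Z^d} |f(j+p)| = \|f\|_1\text{,}
\]
giving $\|\upsilon_k\| \leq 1$. Testing on $f = \delta_0 \in \ell^1(\Z^d)$ (with $0 \in I_k$) produces $\upsilon_k(\delta_0) = \delta_0 \in \ell^1(\Z^d_k)$ of norm $1$, so $\|\upsilon_k\| = 1$. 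The involution identity follows by the substitution $p \mapsto -p$ (a bijection of $k\Z^d$) in $\upsilon_k'(f^\ast)(n) = \sum_{p \in k\Z^d} \overline{f(-p - n)}$, which produces $\overline{\upsilon_k'(f)(-n)}$; passing through $q_k$ and using $q_k(-n) = -q_k(n)$ yields $\upsilon_k(f^\ast) = \upsilon_k(f)^\ast$.

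Surjectivity and injectivity on $I_k$-supported functions are established together: given $g \in \ell^1(\Z^d_k)$, define $f \in \ell^1(\Z^d)$ by $f := g \circ q_k$ on $I_k$ and zero elsewhere. For $j \in I_k$, disjointness of the distinct translates $I_k + p$ (for $p \in k\Z^d$, $p \neq 0$) from $I_k$ forces $f(j + p) = 0$ unless $p = 0$, so $\upsilon_k'(f)(j) = f(j) = g(q_k(j))$, giving $\upsilon_k(f) = g$. The same computation shows that for any $f \in \ell^1(\Z^d)$ supported in $I_k$ the values $\upsilon_k(f)(q_k(j)) = f(j)$ for $j \in I_k$ recover $f$ entirely, establishing injectivity on $\ell^1(I_k)$. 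No substantive obstacle is anticipated --- the entire argument is careful bookkeeping around the partition structure, with the only subtlety being to keep $\upsilon_k$ and its lift $\upsilon_k'$ notationally distinct while exploiting the bijection $q_k|_{I_k}$.
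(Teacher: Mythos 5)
Your proposal is correct and follows essentially the same route as the paper: the same translation argument for well-definedness, the same evaluation $\upsilon_k'(f)(n)=f(n)$ for $n\in I_k$ to get injectivity, and surjectivity via the $I_k$-supported lift, which is exactly the map $\vartheta_k$ of Equation (\ref{vartheta-embedding-def}) used in the paper. The only difference is that you write out the norm-one and adjoint computations that the paper dismisses as straightforward, which is fine.
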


\begin{proof}
Let $n,m \in \Z^d$ such that $q_k(m)=q_k(n)$, i.e. such that $n-m \in k\Z^d$. Thus $\sum_{v\in k\Z^d} f(n+v) = \sum_{v\in k\Z^d}f(m+((n-m)+v)) =\sum_{v\in k\Z^d}f(m+v)$ as desired. Hence $\upsilon_k$ is well-defined, and linearity is straightforward, as well as self-adjointness. Now, the function $\vartheta_k :\ell^2\left(\Z^d_k\right)\rightarrow\ell^2(\Z^d)$ defined in Equation (\ref{vartheta-embedding-def}) restricted to $\ell^1\left(\Z^d_k\right)$ is a function with values in $\ell^1(\Z^d)$, and we check that $\upsilon_k(\vartheta_k(g))=g$. 

Last, assume that we are given a function $f : \Z^d \rightarrow\C$  whose support is contained in $I_k$ such that $\upsilon_k(f) = 0$, which is equivalent by definition to $\upsilon_k'(f) = 0$. Now, again by definition:
\begin{equation*}
0 = \upsilon_k'(f) (n) = \sum_{v\in k\Z^d} f(n+v) = f(n)
\end{equation*}
for all $n \in I_k$. Thus $f = 0$ as desired. It is easy to check that $\upsilon_k(f) \in \ell^1(\Z^d_k)$ for $f\in \ell^1(\Z^d)$ --- in fact $\upsilon_k$ is easily seen to be of norm $1$.
\end{proof}

We will find it convenient to use the following notation across the remainder of this paper, with its first appearance in the proof of the theorem on pointwise strong operator continuity for the representations introduced in Notation (\ref{rep}), which follows.
\begin{notation}\label{cyclic-notation}
Let $d\in\N_\ast$, $k \in \Nbar^d$. For any $n \in \Z^d$, we define $\cyclic{n}{k}$ and $\cycle{n}{k}$ as the unique integers such that:
\begin{equation*}
n - \cycle{n}{k} = \cyclic{n}{k} \text{ with }\cycle{n}{k} \in k\Z^d \text{ and } \cyclic{n}{k} \in I_k \text{.}
\end{equation*}
This notation differs slightly from the usual meaning of the $\mod$ operator, as it refers to the partition $\mathscr{P}_k$ of Equation (\ref{partition-eq}) rather than the partition of $\Z^d$ in cosets for $k\Z^d$. However, this will prove a convenient notation, and is related to our choice in Notation (\ref{rep}). We relate our mod operation to the usual one in the following manner. For $d = 1$ an $k \in \N_\ast$, if $n \in \Z$ and $n = qk + r$ for $q,r \in \Z$ and $|r| < k$, then:
\begin{itemize}
\item If $r \in I_k$ then $\cyclic{n}{k} = r$,
\item If $r \in \left\{ \left\lfloor\frac{k-1}{2}\right\rfloor + 1,\ldots, k-1 \right\}$ then $\cyclic{n}{k} = r - k$,
\item If $r \in \left\{ -k+1,\ldots,\left\lfloor\frac{1-k}{2}\right\rfloor -1 \right\}$ then $\cyclic{n}{k} = r + k$.
\end{itemize}
If $d = 1$ then $\cyclic{n}{\infty} = n$ for all $n\in\Z$. The general computation of $\cyclic{n}{k}$, for arbitrary $d$, $k \in \Nbar^d$, and $n\in\Z^d$, is simply carried on component-wise using the above calculations for $d=1$.

Note that by construction, if $p_k$ is the inverse function of the restriction to $I_k$ of the canonical surjection $q_k : \Z^d\twoheadrightarrow \Z^d_k$, then $\cyclic{n}{k} = p_k\circ q_k(n)$ for all $n\in\Z^d$. In particular, we note for all $n,m \in \Z^d$, we have:
\begin{equation}\label{cyclic-computation-eq}
\cyclic{\cyclic{n}{k}-\cyclic{m}{k}}{k} = \cyclic{n-m}{k}\text{,}
\end{equation}
since $q_k\circ p_k$ is the identity on $\Z^d_k$.
\end{notation}

\begin{remark}\label{cocycle-cyclic-rmk}
Let $(k,\sigma) \in \Xi^d$ for some $d\in\N_\ast$. Identify $\sigma$ with its unique lift as a skew bicharacter of $\Z^d$. By construction, for all $n \in \Z^d_k$, we have:
\begin{multline*}
\sigma(m,n) = \sigma(\cyclic{m}{k},n) = \sigma(m,\cyclic{n}{k}) \\= \sigma(\cyclic{m}{k},\cyclic{n}{k})\text{.} 
\end{multline*}
\end{remark}

\subsection{Continuity}

We now can state the key result of this section:

\begin{theorem}\label{SOT-cont-thm}
Let $d\in\N_\ast$, $(k_\infty,\sigma_\infty) \in \Xi^d$. Let $(k_n,\sigma_n)_{n\in\N}$ be a sequence in $\Xi^d$ converging to $(k_\infty,\sigma_\infty)$. Let $f \in \ell^1(\Z^d)$. The sequence $\left(\pi_{k_n,\sigma_n}\left(\upsilon_{k_n}\left(f\right)\right)\right)_{n\in\N}$ converges to $\pi_{k_\infty,\sigma_\infty}(\upsilon_{k_\infty}(f) )$ in the strong operator topology of $\ell^2(\Z^d)$, where $\upsilon_c : \ell^1(\Z^d)\rightarrow\ell^1\left(\Z^d_c\right)$ is defined in Lemma (\ref{upsilon-lemma}) for all $c\in\Nbar^d$ while $\pi_{c,\sigma}$ is defined by Notation (\ref{rep}) for all $(c,\sigma)\in\Xi^d$.
\end{theorem}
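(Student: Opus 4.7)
The plan is to establish strong operator convergence by reducing to a dense collection of computable test vectors and then unwinding the definition of $\pi_{k,\sigma}$ from Notation (\ref{rep}) into a closed form. The first reduction rests on the uniform bound $\|\pi_{k,\sigma}(\upsilon_k(f))\|_{\B^d}\leq\|f\|_1$, valid for every $(k,\sigma)\in\Xi^d$, since $\upsilon_k$ has norm $1$ by Lemma (\ref{upsilon-lemma}), the representation $\rho_{k,\sigma}$ is norm-decreasing from $\left(\ell^1(\Z^d_k),\conv{k,\sigma}\right)$ into $C^\ast(\Z^d_k,\sigma)$, and $\pi_{k,\sigma}$ is a *-representation. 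Combined with the density of finitely supported vectors in $\ell^2(\Z^d)$ and of finitely supported functions in $\ell^1(\Z^d)$, this bound permits a standard $\varepsilon/3$-argument that reduces the claim to the case $f = \delta_m$ applied to a single basis vector $e_p$, for arbitrary but fixed $m, p \in \Z^d$.

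The second step is an explicit computation. Inspection of Lemma (\ref{upsilon-lemma}) shows $\upsilon_k(\delta_m) = \delta_{q_k(m)}$ in $\ell^1(\Z^d_k)$, where $q_k : \Z^d \to \Z^d_k$ is the canonical surjection. Writing $p = \cyclic{p}{k} + \cycle{p}{k}$, the vector $e_p$ lies in the single summand of (\ref{l2-decomposition-k-eq}) indexed by $\cycle{p}{k}$. Applying in succession $u^\ast_{\cycle{p}{k}}$, $\vartheta_k^\ast$, $U^{q_k(m)}_{k,\sigma}$, $\vartheta_k$ and $u_{\cycle{p}{k}}$---the middle step using Equation (\ref{U-eq})---and then invoking Remark (\ref{cocycle-cyclic-rmk}) to drop the $q_k$'s in the bicharacter argument, I expect to obtain the closed form
\begin{equation*}
\pi_{k,\sigma}(\upsilon_k(\delta_m))\, e_p = \sigma(p,m)\, e_{\cyclic{p-m}{k} + \cycle{p}{k}}.
\end{equation*}

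It remains to verify that both the scalar factor $\sigma_n(p,m)$ and the basis index $\cyclic{p-m}{k_n} + \cycle{p}{k_n}$ stabilize correctly. The scalar converges to $\sigma_\infty(p,m)$ by the very definition of the topology on $\Xi^d$, which uses pointwise convergence on $\mathds{B}^d_{(\infty,\ldots,\infty)}$. For the basis index I would argue componentwise, exploiting the structure of $\Nbar$ as the one-point compactification of $\N_\ast$: in any coordinate $j$ with $k_{\infty,j} < \infty$ one has $k_{n,j} = k_{\infty,j}$ eventually, so $\cyclic{\cdot}{k_n}$ and $\cycle{\cdot}{k_n}$ stabilize to their $k_\infty$ values in that coordinate; in any coordinate $j$ with $k_{\infty,j} = \infty$, once $k_{n,j}$ exceeds $2\max\{|p_j|, |p_j - m_j|\} + 1$ the integers $p_j$ and $(p-m)_j$ already lie in $I_{k_{n,j}}$, so the mod operations return those integers themselves, in agreement with $k_\infty$ (for which $\cyclic{\cdot}{k_\infty}$ is the identity and $\cycle{\cdot}{k_\infty} = 0$). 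Hence for $n$ large the basis vector on the right of the closed form is independent of $n$ and matches its $k_\infty$ counterpart, yielding the SOT convergence. The main obstacle is precisely the notational bookkeeping of step two---carrying the shift $u_{\cycle{p}{k}}$, the embedding $\vartheta_k$ and the twisted unitary $U^{q_k(m)}_{k,\sigma}$ cleanly through the decomposition (\ref{l2-decomposition-k-eq})---together with the coordinate-wise case split in step three; once these are in place, the convergence itself is a routine continuity check.
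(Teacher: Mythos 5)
Your proposal is correct and follows essentially the same route as the paper: the heart of both arguments is the closed form $\pi_{k,\sigma}(\upsilon_k(\delta_m))e_p = \sigma(p,m)\,e_{\cyclic{p-m}{k}+\cycle{p}{k}}$ together with the coordinatewise stabilization of $\cyclic{\cdot}{k_n}$ and $\cycle{\cdot}{k_n}$ (eventual constancy when $k_\infty^j<\infty$, eventual largeness when $k_\infty^j=\infty$) and pointwise convergence of the bicharacters. The only difference is in the routine limiting step: the paper passes from basis vectors to general $\xi\in\ell^2(\Z^d)$ and from $\delta_m$ to general $f\in\ell^1(\Z^d)$ via orthogonality and dominated convergence, whereas you use the uniform bound $\|\pi_{k,\sigma}(\upsilon_k(f))\|_{\B^d}\leq\|f\|_1$ together with density, which is an equally valid variation.
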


\begin{proof}
For any $c \in \Nbar^d$, the canonical surjection $\Z^d \twoheadrightarrow \Z^d_c$ is denoted by $q_c$. We write $k_n = (k^1_n,\ldots,k^d_n)$ for all $n\in\N\cup\{\infty\}$. We identify the skew-bicharacters $\sigma_n$, for all $n\in\N\cup\{\infty\}$, with their unique lift to skew-bicharacters of $\Z^d$.

Let $m\in\Z^d$. Let $\delta_m \in \ell^1(\Z^d)$ be $1$ at $m$ and zero everywhere else, and let $(e_r)_{r\in\Z^d}$ be the canonical Hilbert basis of $\ell^2(\Z^d)$. A quick computation shows that $\upsilon_c\left(\delta_m\right) = \upsilon_c\left(\delta_{\cyclic{m}{c}}\right)$ for all $c \in \Nbar^d$. Moreover, from Equation (\ref{rep-eq}) in Notation (\ref{rep}) and using Equality (\ref{U-eq}) in Theorem (\ref{representation-thm}), as well as Equation (\ref{cyclic-computation-eq}) and Remark (\ref{cocycle-cyclic-rmk}), we have for all $r \in \Z^d$ and $n\in\Nbar$:
\begin{equation}\label{pi-action-eq}
\pi_{k_n,\sigma_n}(\upsilon_{k_n}(\delta_m)) e_r = \sigma_n(r,m)e_{\cyclic{r - m}{k_n} + \cycle{r}{k_n}}\text{.}
\end{equation}

We keep $m = (m_1,\ldots,m_d),r=(r_1,\ldots,r_d) \in \Z^d$ fixed for now. Let $j \in \{1,\ldots,d\}$. We have one of the following two cases:
\begin{itemize}
\item If $k_\infty^j = \infty$, then there exists $N_{r,m,j} \in \N$ such that for all $n\geq N_{r,m,j}$ we have $k_n^j > 2|r_j- m_j|$ and $k_n^j > |r_j|$, and thus:
\begin{align*}
\cyclic{r_j - m_j}{k^j_n} &= r_j - m_j  = \cyclic{r_j - m_j}{k_\infty^j}\\
\intertext{and}
\cycle{r_j}{k^j_n} &=\cycle{r_j}{k_\infty^j} =0
\end{align*}
for all $n \geq N_{r,m,j}$.

\item If $k_\infty^j \in \N_\ast$ then the sequence $(k^j_n)_{n\in\N}$, which lies in $\Nbar$, is eventually constant. Thus, there exists $N_{r,m,j}\in\N$ such that $k^j_n = k_\infty^j$ for all $n\geq N_{r,m,j}$. Consequently, we have $\cyclic{r_j - m_j}{k^j_n} = \cyclic{r_j - m_j}{k_\infty^j}$ and $\cycle{r_j}{k^j_n}=\cycle{r_j}{k_\infty^j}$ for all $j\geq N_{r,m,j}$.
\end{itemize}

Thus, there exists $N_{r,m} = \max\{N_{r,m,1},\ldots,N_{r,m,d}\}$ such that, for all $n\geq N_{r,m}$, we have $\cyclic{r - m}{k_n} = \cyclic{r - m}{k_\infty}$ and $\cycle{r}{k_n}=\cycle{r}{k_\infty}$. 

We thus have, by Equation (\ref{pi-action-eq}), for $n\geq N_{r,m}$:
\begin{multline*}
\pi_{k_n,\sigma_n}\left(\upsilon_{k_n}\left(\delta_m\right)\right)e_r - \pi_{k_\infty,\sigma_\infty}\left(\upsilon_{k_\infty}\left(\delta_m\right)\right)e_r\\ = (\sigma_n(r,m)-\sigma_\infty(r,m))e_{ \cyclic{r - m}{k_\infty} + \cycle{r}{k_\infty}}\text{.}
\end{multline*}
The sequence $(\sigma_n(r,m)-\sigma_\infty(r,m))_{n\in\N}$ converges to $0$ by assumption. Hence:
\begin{equation*}
\lim_{n\rightarrow\infty} \| \left(\pi_{k_n,\sigma_n}\left(\upsilon_{k_n}\left(\delta_m\right)\right) - \pi_{k_\infty,\sigma_\infty}\left(\upsilon_{k_\infty}\left(\delta_m\right)\right)\right)e_r\|_2 = 0\text{.}
\end{equation*}

Let now $\xi = (\xi_r)_{r\in\Z^d} \in\ell^2(\Z^d)$. Then, using Pythagorean theorem (since $\pi_{k_n,\sigma_n}-\pi_{k_\infty,\sigma_\infty}$ maps the canonical basis to an orthogonal family):
\begin{multline*}
\| \left(\pi_{k_n,\sigma_n}\left(\upsilon_{k_n}\left(\delta_m\right)\right) - \pi_{k_\infty,\sigma_\infty}\left(\upsilon_{k_\infty}\left(\delta_m\right)\right)\right)\xi \|_2^2 \\= \sum_{r\in\Z^d} |\xi_r|^2  \| (\sigma_n(r,m)-\sigma_\infty(r,m))e_{ \cyclic{r - m}{k_\infty} + \cycle{r}{k_\infty}}\|_2^2\\
= \sum_{r\in\Z^d} |\xi_r|^2|\sigma_n(r,m)-\sigma_\infty(r,m)|^2\text{.}
\end{multline*}
Now, since all bicharacters are valued in $\U$, we have:
\begin{equation*}
|\xi_r|^2|\sigma_n(r,m)-\sigma_\infty(r,m)|^2\leq 4|\xi_r|^2\text{ and } \sum_{r\in\Z^d} 4|\xi_r|^2= 4\|\xi\|_2^2 <\infty \text{.}
\end{equation*}
The Lebesgues dominated convergence theorem applies, and we conclude that:
\begin{equation}\label{wot-limit-eq}
\lim_{n\rightarrow\infty} \left\|\left(\pi_{k_n,\sigma_n}\left(\upsilon_{k_n}\left(\delta_m\right)\right)-\pi_{k_\infty,\sigma_\infty}\left(\upsilon_{k_\infty}\left(\delta_m\right)\right)\right)\xi \right\|_2 = 0\text{,}
\end{equation}
as desired.

Now, since for all $n\in\N\cup\{\infty\}$ the maps $\upsilon_{k_n}$ are continuous and linear, we immediately get:
\begin{multline*}
\left\|\left(\pi_{k_n,\sigma_n}\left(\upsilon_{k_n}\left(f\right)\right) - \pi_{k_\sigma,\sigma_\infty}\left(\upsilon_{k_\infty}\left(f\right)\right)\right)\xi \right\|_2 \\ \leq\sum_{m\in\Z^d} |f(m)| \left\|\left(\pi_{k_n,\sigma_n}\left(\upsilon_{k_n}\left(\delta_m\right)\right)-\pi_{k_\infty,\sigma_\infty}\left(\upsilon_{k_\infty}\left(\delta_m\right)\right)\right)\xi \right\|_2
\end{multline*}
for all $\xi \in \ell^2(\Z^d)$. Since $f \in \ell^1(\Z^d)$ and, for all $m\in\Z^d$:
\begin{equation*}
|f(m)|\left\|\left(\pi_{k_n,\sigma_n}\left(\upsilon_{k_n}\left(\delta_m\right)\right)-\pi_{k_\infty,\sigma_\infty}\left(\upsilon_{k_\infty}\left(\delta_m\right)\right)\right)\xi \right\|_2\leq2|f(m)|\|\xi\|_2\text{,}
\end{equation*}
the Lebesgues dominated convergence theorem implies that:
\begin{equation*}
\lim_{n\rightarrow\infty} \left\|\left(\pi_{k_n,\sigma_n}\left(\upsilon_{k_n}\left(f\right)\right) - \pi_{k_\infty,\sigma_\infty}\left(\upsilon_{k_\infty}\left(f\right)\right)\right)\xi \right\|_2 = 0
\end{equation*}
for all $\xi \in\ell^2(\Z^d)$ by Equation (\ref{wot-limit-eq}), as desired.
\end{proof}

It is easy to check that, in general, we can not strengthen Theorem (\ref{SOT-cont-thm}) by replacing the strong operator topology with the norm topology. This is, in turn, where using our notion of bridge, rather than the distance in norm between the Lipschitz balls, will prove very useful. As a hint of things to come, we prove the following results.

\begin{lemma}\label{trace-class-SOT-lemma}
Let $\Hilbert$ be a separable Hilbert space, $(T_n)_{n\in\N}$ be a sequence of operators on $\Hilbert$ and $T$ an operator on $\Hilbert$. Let $\B(\Hilbert)$ be the C*-algebra of all bounded linear operators on $\Hilbert$. The following assertions are equivalent:
\begin{enumerate}
\item The sequence $(T_n)_{n\in\N}$ converges to $T$ in the strong operator topology,
\item For all compact operators $A$ on $\Hilbert$, we have:
\begin{equation*}
\lim_{n\rightarrow\infty} \|(T_n-T)A\|_{\B(\Hilbert)} = 0 \text{,}
\end{equation*}
\item For all trace-class operators $A$ on $\Hilbert$, we have:
\begin{equation*}
\lim_{n\rightarrow\infty} \|(T_n-T)A\|_{\B(\Hilbert)} = 0 \text{.}
\end{equation*}
\end{enumerate}
\end{lemma}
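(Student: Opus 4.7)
The plan is to establish the cycle of implications $(1)\Rightarrow(2)\Rightarrow(3)\Rightarrow(1)$. The implication $(2)\Rightarrow(3)$ requires no work, as every trace-class operator is compact.

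For $(3)\Rightarrow(1)$, I would detect strong convergence using rank-one projections. Given $\xi\in\Hilbert$ with $\|\xi\|=1$, the operator $P_\xi:\eta\mapsto\inner{\xi}{\eta}\xi$ is a rank-one projection, hence trace-class, and satisfies $P_\xi\xi=\xi$. Therefore one has $\|(T_n-T)\xi\|=\|(T_n-T)P_\xi\xi\|\leq\|(T_n-T)P_\xi\|_{\B(\Hilbert)}$, and hypothesis (3) forces the right-hand side to $0$. A trivial rescaling handles arbitrary $\xi\in\Hilbert$.

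The substantive work lies in $(1)\Rightarrow(2)$. I would proceed in two steps. First, since $(T_n-T)_{n\in\N}$ is pointwise bounded by SOT convergence, the uniform boundedness principle provides a constant $M=\sup_{n\in\N}\|T_n-T\|_{\B(\Hilbert)}<\infty$. Second, the singular-value decomposition of any compact operator $A$ on the separable space $\Hilbert$ expresses $A$ as a norm-limit of a sequence of finite-rank operators $A_k=\sum_{i=1}^{k}s_i\inner{e_i}{\cdot}f_i$, where $(e_i)$ and $(f_i)$ are orthonormal systems and $(s_i)$ are the singular values. For each fixed $k$, a direct expansion gives
$$\|(T_n-T)A_k\|_{\B(\Hilbert)}\leq\sum_{i=1}^{k}s_i\|(T_n-T)f_i\|\text{,}$$
which tends to $0$ by assumption (1). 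Given $\varepsilon>0$, I would pick $k$ so that $M\|A-A_k\|_{\B(\Hilbert)}<\varepsilon/2$, then choose $n$ large enough that the above display is less than $\varepsilon/2$; the triangle inequality $\|(T_n-T)A\|\leq\|(T_n-T)(A-A_k)\|+\|(T_n-T)A_k\|$ closes the argument.

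I do not anticipate a genuine obstacle: this is a classical characterization of the strong operator topology via the compact and trace-class ideals, and the only slightly delicate ingredient is the reliance on the singular-value decomposition to approximate compact operators by finite-rank ones in norm. An alternative, entirely equivalent route for $(1)\Rightarrow(2)$ would be to exploit the compactness of $\overline{A(\mathrm{Ball}(\Hilbert))}$ and argue that the uniformly bounded sequence $(T_n-T)$ is equicontinuous on compacta, so pointwise convergence there is uniform; I find the finite-rank truncation slightly cleaner to write down.
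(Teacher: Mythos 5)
Your proposal is correct and follows essentially the same route as the paper: the cycle $(1)\Rightarrow(2)\Rightarrow(3)\Rightarrow(1)$, with the uniform boundedness principle plus norm-approximation of a compact operator by finite-rank operators for $(1)\Rightarrow(2)$, and a rank-one projection onto $\mathrm{span}\{\xi\}$ to detect strong convergence for $(3)\Rightarrow(1)$. The only cosmetic difference is that you invoke the singular-value decomposition to produce the finite-rank approximants and bound $\|(T_n-T)A_k\|$ directly, whereas the paper first treats a general rank-one operator and then appeals to density of finite-rank operators (using separability); both are sound.
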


\begin{proof}
Assume that $(T_n)_{n\in\N}$ converges to $T$ in the strong operator topology. By the uniform boundedness principle, $(T_n)_{n\in\N}$ is uniformly bounded. Let $M\in\R$ be chosen so that for all $n\in\N$, we have $\|T_n-T\|\leq M$. 

First, if $A$ is a rank one operator and if $\xi \in \Hilbert$ is a unit vector which spans the range of $A$, then for all unit vector $\eta\in \Hilbert$ there exists $\lambda\in\C$ such that $A\eta=\lambda \xi$. Then, since $\|A\eta\|_{\Hilbert} = \|\lambda \xi\|_{\Hilbert} \leq \|A\|_{\B(\Hilbert)}$, we have $|\lambda|\leq \|A\|_{\B(\Hilbert)}$, and thus:
\begin{equation}\label{sot-eq1}
\|(T_n-T)A\|_{\B(\Hilbert)} \leq \|A\|_{\B(\Hilbert)} \|(T_n-T)\xi\|_\Hilbert \stackrel{n\rightarrow\infty}{\longrightarrow} 0 \text{.}
\end{equation}

Of course, Equation (\ref{sot-eq1}) holds if $A = 0$ as well.

Let $A$ now be an arbitrary compact operator, and let $\varepsilon > 0$. Since $\Hilbert$ is separable, there exists a finite rank operator $A_\varepsilon$ on $\ell^2(\Z^d)$ such that:
\begin{equation*}
\|A-A_\varepsilon\|_{\B(\Hilbert)} < \frac{1}{2M}\varepsilon\text{.}
\end{equation*}
Since $A_\varepsilon$ is finite rank, by Equation (\ref{sot-eq1}) (and a trivial induction), there exists $N \in \N$ such that for all $n\geq N$ we have:
\begin{equation*}
\|(T_n-T)A_\varepsilon\|_{\B(\Hilbert)} \leq \frac{1}{2M}\varepsilon \text{.}
\end{equation*}
Thus, for $n\geq N$:
\begin{equation*}
\begin{split}
\|(T_n-T)A\|_{\B(\Hilbert)}&\leq \|T_n-T\|_{\B(\Hilbert)}\|A-A_\varepsilon\|_{\B(\Hilbert)} + \|(T_n-T)A_\varepsilon\|_{\B(\Hilbert)}\\
&\leq \frac{1}{2}\varepsilon + \frac{1}{2}\varepsilon = \varepsilon \text{.}
\end{split}
\end{equation*}
This proves our first implication. Our second implication is trivial, since trace-class operators are compact. Last, suppose that (3) holds. For any unit vector $\xi \in \Hilbert$,  we let $A$ be the be projection on the span of $\xi$. Thus, $A$ is a trace-class operator, and then for all $n\in\N$:
\begin{equation*}
\|(T-T_n)A\|_{\B(\Hilbert)}=\|(T-T_n)\xi\|_\Hilbert\text{,}
\end{equation*}
so by (3), we have $\lim_{n\rightarrow\infty} \|(T-T_n)\xi\|_\Hilbert = 0$. As $\xi$ is an arbitrary unit vector in $\Hilbert$, this is enough to conclude that $(T_n)_{n\in\N}$ converges in the strong operator topology to $T$, as desired.
\end{proof}

\begin{corollary}\label{compact-sot-corollary}
Let $d\in\N_\ast$, $(k,\sigma) \in \Xi^d$ and $T$ be a compact operator on $\ell^2(\Z^d)$. Let $(k_n,\sigma_n)_{n\in\N}$ be a sequence in $\Xi^d$ converging to $(k,\sigma)$. If $f \in \ell^1(\Z^d)$, then the sequence $\left(\pi_{k_n,\sigma_n}\left(\upsilon_{k_n}(f)\right)T \right)_{n\in\N}$ converges in norm to $\pi_{k,\sigma}\left(\upsilon_k(f)\right)T$.
\end{corollary}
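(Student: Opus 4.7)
The corollary is an immediate combination of the two preceding results, so my plan is essentially to chain them together rather than to do any new analysis.

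First I would set, for each $n \in \N$, the operator $T_n = \pi_{k_n,\sigma_n}(\upsilon_{k_n}(f))$ on $\ell^2(\Z^d)$, and likewise $T_\infty = \pi_{k,\sigma}(\upsilon_k(f))$. By Theorem \ref{SOT-cont-thm} applied to the convergent sequence $(k_n,\sigma_n)_{n\in\N}$ in $\Xi^d$ and the given $f \in \ell^1(\Z^d)$, the sequence $(T_n)_{n\in\N}$ converges to $T_\infty$ in the strong operator topology of $\ell^2(\Z^d)$.

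Next, since $\ell^2(\Z^d)$ is a separable Hilbert space and $T$ is, by hypothesis, a compact operator on it, Lemma \ref{trace-class-SOT-lemma} applies (specifically the implication from strong operator convergence to the norm convergence of $(T_n - T_\infty)A$ for $A$ compact). Taking $A = T$ in that lemma yields
\begin{equation*}
\lim_{n\to\infty} \|(T_n - T_\infty)T\|_{\B^d} = 0\text{,}
\end{equation*}
which is exactly the statement that $(\pi_{k_n,\sigma_n}(\upsilon_{k_n}(f))T)_{n\in\N}$ converges in norm to $\pi_{k,\sigma}(\upsilon_k(f))T$.

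There is no real obstacle here: the nontrivial content is already packaged into Theorem \ref{SOT-cont-thm} (which handles the subtle passage from pointwise convergence of bicharacters and of the moduli $k_n$ to strong operator convergence of the representations of $\delta_m$, and then leverages the $\ell^1$ assumption on $f$ via dominated convergence) and into Lemma \ref{trace-class-SOT-lemma} (which uses uniform boundedness and approximation of compact operators by finite rank operators to upgrade strong operator to norm convergence against a compact multiplier). The corollary is just the composition of these two facts, so the proof will be a few lines.
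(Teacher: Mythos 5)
Your proof is correct and is exactly the paper's argument: the paper's proof of this corollary is the one-line instruction to apply Lemma (\ref{trace-class-SOT-lemma}) to Theorem (\ref{SOT-cont-thm}), which is precisely the composition you spell out with $A = T$. Nothing further is needed.
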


\begin{proof}
Apply Lemma (\ref{trace-class-SOT-lemma}) to Theorem (\ref{SOT-cont-thm}).
\end{proof}

Theorem (\ref{SOT-cont-thm}) can be used to show that for $f \in \ell^1(\Z^d)$, the map:
\begin{equation*}
(k,\sigma)\in\Xi^d\mapsto \|\pi_{k,\sigma}(\upsilon_k(f))\|_{\B^d}
\end{equation*}
is lower semi-cont\-inuous. We proved, in fact, that this map is continuous in \cite[Theorem 2.6]{Latremoliere05}, which is an essential building block for our current work as well.

\begin{theorem}[\cite{Latremoliere05}]\label{norm-cont-thm}
Let $(k_n,\sigma_n)_{n\in\N}$ be a sequence in $\Xi^d$ converging to some $(k,\sigma)\in \Xi^d$. If $f \in \ell^1(\Z^d)$ then $(\|\pi_{k_n,\sigma_n}(\upsilon_{k_n}(f))\|_{\B^d})_{n\in\N}$ converges to $\|\pi_{k,\sigma}(\upsilon_{k}(f))\|_{\B^d}$, where  $\pi_{c,\theta}$ are the representations introduced in Notation (\ref{rep}) for all $(c,\theta)\in\Xi^d$.
\end{theorem}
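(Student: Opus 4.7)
I would split the claim at $(k_\infty,\sigma_\infty)$ into lower and upper semicontinuity of $(k,\sigma)\mapsto\|\pi_{k,\sigma}(\upsilon_k(f))\|_{\B^d}$. Lower semicontinuity is an immediate consequence of Theorem (\ref{SOT-cont-thm}): for any unit vector $\xi\in\ell^2(\Z^d)$, strong operator convergence gives
\[
\|\pi_{k_\infty,\sigma_\infty}(\upsilon_{k_\infty}(f))\xi\|_2 = \lim_n \|\pi_{k_n,\sigma_n}(\upsilon_{k_n}(f))\xi\|_2 \leq \liminf_n \|\pi_{k_n,\sigma_n}(\upsilon_{k_n}(f))\|_{\B^d},
\]
and taking the supremum over unit vectors yields the desired inequality.

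For upper semicontinuity, I would first reduce to the case of finitely supported $f$. By Lemma (\ref{upsilon-lemma}) and the $\ell^1$-contractivity of the integrated representation of any twisted convolution algebra, $\|\pi_{c,\theta}(\upsilon_c(g))\|_{\B^d}\leq\|g\|_1$ uniformly in $(c,\theta)\in\Xi^d$, so approximating $f$ in $\ell^1$-norm by finitely supported functions provides a uniform norm approximation of $\pi_{c,\theta}(\upsilon_c(f))$ across $(c,\theta)$. Assuming then that $f$ has finite support $S$, for each $n$ pick via Hahn--Banach and the C*-identity a state $\varphi_n$ on $C^\ast(\Z^d_{k_n},\sigma_n)$ satisfying $\varphi_n\bigl(\upsilon_{k_n}(f)^\ast\conv{k_n,\sigma_n}\upsilon_{k_n}(f)\bigr)\geq \|\upsilon_{k_n}(f)\|_{k_n,\sigma_n}^2 - \tfrac{1}{n}$, and form the associated GNS triple $(\rho_n,\Hilbert_n,\xi_n)$. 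Define $\psi_n:\Z^d\to\C$ by $\psi_n(m)=\inner{\xi_n}{\rho_n(\delta_{q_{k_n}(m)})\xi_n}$, where $q_{k_n}:\Z^d\twoheadrightarrow\Z^d_{k_n}$ is the canonical surjection; since each $\rho_n(\delta_{q_{k_n}(m)})$ is unitary, $|\psi_n(m)|\leq 1$, so a diagonal extraction yields a subsequence along which $\psi_n\to\psi$ pointwise.

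The $\sigma_n$-positive-definiteness relations for $\psi_n$ are finite Hermitian sums in $\overline{c(p)}c(q)$, the values of $\sigma_n$ on $\operatorname{supp}(c)\times\operatorname{supp}(c)$, and $\psi_n(q-p)$ for finitely supported $c$; pointwise convergence $\sigma_n\to\sigma_\infty$ and $\psi_n\to\psi$ passes these to the corresponding $\sigma_\infty$-positive-definiteness relations for $\psi$. For each coordinate $j$ with $k_\infty^j\in\N_\ast$, the topology of $\Nbar$ forces $k_n^j=k_\infty^j$ for all large $n$, so $\psi$ inherits the $k_\infty^j$-periodicity required to descend to $\Z^d_{k_\infty}$. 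Since the abelian group $\Z^d_{k_\infty}$ is amenable --- equivalently, the full and reduced twisted C*-algebras coincide by Remark (\ref{full-rmk}) --- $\psi$ arises from a representation $(\rho_\infty,\Hilbert_\infty,\xi_\infty)$ of $C^\ast(\Z^d_{k_\infty},\sigma_\infty)$. Expanding $\|\rho_n(\upsilon_{k_n}(f))\xi_n\|^2$ and $\|\rho_\infty(\upsilon_{k_\infty}(f))\xi_\infty\|^2$ as finite linear combinations of the $\psi_n$, respectively $\psi$, with coefficients polynomial in the bicharacters restricted to $S\times S$, and passing to the limit, yields $\lim_n \|\upsilon_{k_n}(f)\|_{k_n,\sigma_n}^2 \leq \|\upsilon_{k_\infty}(f)\|_{k_\infty,\sigma_\infty}^2$ along the subsequence; a standard subsequence-of-subsequence argument promotes this to the full sequence. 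The hard part will be the promotion of the pointwise limit $\psi$ to a bona fide bounded representation of $C^\ast(\Z^d_{k_\infty},\sigma_\infty)$: positive definiteness and periodicity must both survive the limit, and amenability is essential to translate a positive definite function on $\Z^d_{k_\infty}$ into an actual representation whose norm dominates the limiting one.
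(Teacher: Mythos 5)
Your argument is sound, but note that this paper does not actually prove the statement: it is quoted from \cite{Latremoliere05}, where it is obtained from the groupoid-based construction of a continuous field of C*-algebras whose fibers are the quantum and fuzzy tori, so your proposal should be compared with that proof rather than anything in the present text. Your route is the direct, fiberwise version of the same circle of ideas: lower semicontinuity from the strong-operator convergence of Theorem (\ref{SOT-cont-thm}) applied to the regular representations, exactly as the paper itself observes after that theorem, and upper semicontinuity by the classical Rieffel-style compactness argument --- nearly norming states, GNS, the bounded twisted positive definite functions $\psi_n$ on $\Z^d$, a pointwise limit $\psi$ which is $\sigma_\infty$-positive definite and eventually periodic in the coordinates where $k_\infty^j$ is finite, hence descends to $\Z^d_{k_\infty}$ (here you implicitly use that $\sigma_\infty$ is the lift of a bicharacter of $\Z^d_{k_\infty}$, cf.\ Remark (\ref{cocycle-cyclic-rmk})), and finally amenability so that the state built from $\psi$ on the full algebra controls the reduced norm, as in Remark (\ref{full-rmk}). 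The remaining steps you flag as ``the hard part'' are in fact routine for a unital $\ell^1$ twisted group algebra: $\psi(0)=\lim_n\|\xi_n\|^2=1$, and a positive functional on the unital Banach *-algebra $\ell^1\left(\Z^d_{k_\infty}\right)$ extends to its enveloping C*-algebra, so no genuine gap remains; the $\ell^1$-contraction reduction to finitely supported $f$ via Lemma (\ref{upsilon-lemma}) and the subsequence-of-subsequences step are also correct. What the two approaches buy is complementary: the continuous-field formulation of \cite{Latremoliere05} packages the statement into general machinery and yields the field structure itself, while your argument is self-contained, avoids the groupoid construction, and is more in the explicit, quantitative spirit this paper advocates.
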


Though Theorem (\ref{SOT-cont-thm}) only asserts pointwise continuity for the norms, we can actually improve the situation using the following lemma, which is the first part of \cite[Lemma 10.1]{Rieffel00}. We include its proof for the reader's convenience.

\begin{lemma}\label{cont-lemma}
Let $\mathsf{m}$ be a norm on a subspace $V$ of $\ell^1(\Z^d)$. For all $(k,\sigma)\in\Xi^d$, we assume given a norm $\mathsf{n}_{k,\sigma}$ of $\upsilon_k(V)$ such that $\mathsf{n}_{k,\sigma}\circ\upsilon_k\leq\mathsf{m}$ and such that, for all $v \in V$, the map $(k,\sigma)\in\Xi^d \mapsto \mathsf{n}_{k,\sigma}\left(\upsilon_k(v)\right)$ is continuous. Then:
\begin{equation}\label{cont-lem-eq}
(k,\sigma,v) \in \Xi^d\times V \longmapsto \mathsf{n}_{k,\sigma}(\upsilon_k(v))
\end{equation}
is continuous, when $V$ is endowed with the norm $\mathsf{m}$ and $\Xi^d\times V$ with the product topology.
\end{lemma}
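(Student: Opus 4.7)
The plan is the standard argument that pointwise continuity in the parameter plus a uniform bound in the vector variable yields joint continuity. Fix $(k_0,\sigma_0,v_0)\in\Xi^d\times V$ and let $\varepsilon>0$. The key observation is that, since each $\mathsf{n}_{k,\sigma}$ is a norm on $\upsilon_k(V)$ and $\upsilon_k$ is linear, the reverse triangle inequality together with the assumption $\mathsf{n}_{k,\sigma}\circ\upsilon_k\leq\mathsf{m}$ gives, for every $(k,\sigma)\in\Xi^d$ and every $v\in V$,
\begin{equation*}
\left|\mathsf{n}_{k,\sigma}(\upsilon_k(v))-\mathsf{n}_{k,\sigma}(\upsilon_k(v_0))\right|\leq \mathsf{n}_{k,\sigma}(\upsilon_k(v-v_0))\leq \mathsf{m}(v-v_0)\text{.}
\end{equation*}
This is a \emph{uniform} Lipschitz estimate in the $v$-variable, independent of the parameter $(k,\sigma)$, and it is the only role played by the hypothesis $\mathsf{n}_{k,\sigma}\circ\upsilon_k\leq\mathsf{m}$.

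Next, split the desired increment by inserting the intermediate value $\mathsf{n}_{k,\sigma}(\upsilon_k(v_0))$:
\begin{equation*}
\begin{split}
&\left|\mathsf{n}_{k,\sigma}(\upsilon_k(v))-\mathsf{n}_{k_0,\sigma_0}(\upsilon_{k_0}(v_0))\right|\\
&\qquad\leq \left|\mathsf{n}_{k,\sigma}(\upsilon_k(v))-\mathsf{n}_{k,\sigma}(\upsilon_k(v_0))\right|+\left|\mathsf{n}_{k,\sigma}(\upsilon_k(v_0))-\mathsf{n}_{k_0,\sigma_0}(\upsilon_{k_0}(v_0))\right|\text{.}
\end{split}
\end{equation*}
By the above uniform estimate, the first term is at most $\mathsf{m}(v-v_0)$, and is therefore less than $\frac{\varepsilon}{2}$ as soon as $\mathsf{m}(v-v_0)<\frac{\varepsilon}{2}$. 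By the pointwise continuity assumption applied to the fixed element $v_0$, the second term is less than $\frac{\varepsilon}{2}$ for all $(k,\sigma)$ in a suitable neighborhood $U$ of $(k_0,\sigma_0)$ in $\Xi^d$.

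Thus, letting $W=\{v\in V:\mathsf{m}(v-v_0)<\frac{\varepsilon}{2}\}$, the set $U\times W$ is an open neighborhood of $(k_0,\sigma_0,v_0)$ in $\Xi^d\times V$ on which the map in (\ref{cont-lem-eq}) differs from its value at $(k_0,\sigma_0,v_0)$ by at most $\varepsilon$. This proves joint continuity. There is no real obstacle here beyond a clean use of the reverse triangle inequality; the only subtlety is to remember that $\mathsf{n}_{k,\sigma}\circ\upsilon_k\leq\mathsf{m}$ holds uniformly in $(k,\sigma)$, which is what makes the $v$-dependence equicontinuous across parameters.
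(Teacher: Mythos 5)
Your proof is correct and follows essentially the same argument as the paper: the same splitting of the increment through the intermediate value $\mathsf{n}_{k,\sigma}(\upsilon_k(v_0))$, with the reverse triangle inequality, linearity of $\upsilon_k$, and the uniform domination $\mathsf{n}_{k,\sigma}\circ\upsilon_k\leq\mathsf{m}$ handling the vector variable, and pointwise continuity at the fixed vector handling the parameter. Nothing is missing.
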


\begin{proof}
For any $(k,\sigma),(c,\theta)\in\Xi^d$ and $a,b \in V$, we have:
\begin{equation*}
\begin{split}
|\mathsf{n}_{c,\theta}(\upsilon_c(b)) - \mathsf{n}_{k,\sigma}(\upsilon_k(a))|&\leq |\mathsf{n}_{c,\theta}(\upsilon_c(b)) - \mathsf{n}_{c,\theta}(\upsilon_c(a))| + | \mathsf{n}_{c,\theta}(\upsilon_c(a)) - \mathsf{n}_{k,\sigma}(\upsilon_k(a)) | \\
&\leq \mathsf{n}_{c,\theta}(\upsilon_c(b-a)) + | \mathsf{n}_{c,\theta}(\upsilon_c(a)) - \mathsf{n}_{k,\sigma}(\upsilon_k(a)) |\\
&\leq \mathsf{m}(b-a) + |\mathsf{n}_{c,\theta}(\upsilon_c(a)) - \mathsf{n}_{k,\sigma}(\upsilon_k(a)) |\text{.}
\end{split}
\end{equation*}
Let $\varepsilon > 0$. By assumption, there exists an open neighborhood $U$ of $(k,\sigma)$ in $\Xi^d$ such that, for all $(c,\theta) \in U$, we have:
\begin{equation*}
|\mathsf{n}_{c,\theta}(\upsilon_c(a)) - \mathsf{n}_{k,\sigma}(\upsilon_k(a)) | < \frac{1}{2}\varepsilon\text{.}
\end{equation*}
Thus, if $U_2$ is the open ball of center $a$ and radius $\frac{1}{2}\varepsilon$ for the norm $\mathsf{m}$, we then have that for all $(c,\theta,b) \in U\times U_2$:
\begin{equation*}
|\mathsf{n}_{c,\theta}(\upsilon_c(b)) - \mathsf{n}_{k,\sigma}(\upsilon_k(a))| < \varepsilon \text{,}
\end{equation*}
as desired.
\end{proof}

\begin{corollary}\label{norm-cont-corollary}
Let $d\in\N_\ast$. The map:
\begin{equation*}
(k,\sigma,f) \in \Xi^d\times \ell^1(\Z^d) \longmapsto \|\pi_{c,\theta}(\upsilon_k(f))\|_{\B^d}
\end{equation*}
is continuous when $\ell^1(\Z^d)$ is endowed with $\|\cdot\|_1$ and $\Xi^d\times\ell^1(\Z^d)$ with the product topology.
\end{corollary}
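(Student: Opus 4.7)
The plan is to apply Lemma (\ref{cont-lemma}) directly, with $V = \ell^1(\Z^d)$ equipped with $\mathsf{m} = \|\cdot\|_1$, and with the family of norms $\mathsf{n}_{k,\sigma} : g \in \upsilon_k\bigl(\ell^1(\Z^d)\bigr) \longmapsto \|\pi_{k,\sigma}(g)\|_{\B^d}$ for $(k,\sigma) \in \Xi^d$. The corollary will then be an immediate consequence, once the three hypotheses of Lemma (\ref{cont-lemma}) are verified.

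First, I would check that $\mathsf{n}_{k,\sigma}$ is indeed a norm on $\upsilon_k(\ell^1(\Z^d))$ for each $(k,\sigma) \in \Xi^d$: this follows because $\pi_{k,\sigma}$, being a faithful *-representation of $C^\ast(\Z^d_k,\sigma)$ as recalled in Notation (\ref{rep}), restricts to an injective linear map on $\upsilon_k(\ell^1(\Z^d)) \subseteq \ell^1(\Z^d_k)$, so $\|\pi_{k,\sigma}(\cdot)\|_{\B^d}$ inherits the norm properties of the operator norm on $\B^d$.

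Next, I would establish the domination $\mathsf{n}_{k,\sigma}\circ\upsilon_k \leq \mathsf{m}$. For any $f \in \ell^1(\Z^d)$, Lemma (\ref{upsilon-lemma}) gives $\|\upsilon_k(f)\|_1 \leq \|f\|_1$, and the standard bound on the operator norm of an integrated representation (see Remark (\ref{full-rmk})) gives $\|\pi_{k,\sigma}(g)\|_{\B^d} \leq \|g\|_1$ for all $g \in \ell^1(\Z^d_k)$. Composing these two inequalities yields
\begin{equation*}
\|\pi_{k,\sigma}(\upsilon_k(f))\|_{\B^d} \leq \|\upsilon_k(f)\|_1 \leq \|f\|_1 \text{,}
\end{equation*}
which is exactly the required bound $\mathsf{n}_{k,\sigma}\circ\upsilon_k \leq \mathsf{m}$.

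The third hypothesis, namely the pointwise continuity of $(k,\sigma) \mapsto \mathsf{n}_{k,\sigma}(\upsilon_k(f))$ for each fixed $f \in \ell^1(\Z^d)$, is precisely the content of Theorem (\ref{norm-cont-thm}), which has already been established (citing \cite{Latremoliere05}). With all three hypotheses confirmed, Lemma (\ref{cont-lemma}) yields joint continuity of $(k,\sigma,f) \mapsto \|\pi_{k,\sigma}(\upsilon_k(f))\|_{\B^d}$ on $\Xi^d \times \ell^1(\Z^d)$, which is exactly the claim. There is no real obstacle here since the hard analytic work has been absorbed into the pointwise continuity Theorem (\ref{norm-cont-thm}) and the abstract mechanism of Lemma (\ref{cont-lemma}); the proof amounts to checking that the hypotheses of the latter are satisfied in this particular setting.
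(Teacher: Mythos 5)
Your proposal is correct and follows essentially the same route as the paper: the paper's proof is exactly an application of Lemma (\ref{cont-lemma}) with $\mathsf{m} = \|\cdot\|_1$ and the norms $\mathsf{n}_{k,\sigma} = \|\pi_{k,\sigma}(\cdot)\|_{\B^d}$, with the hypotheses you verify (domination by $\|\cdot\|_1$ and pointwise continuity) absorbed into the cited results. Your appeal to Theorem (\ref{norm-cont-thm}) for the pointwise continuity is in fact the right reference, since Theorem (\ref{SOT-cont-thm}), which the paper's one-line proof nominally cites, only yields lower semicontinuity of the norms.
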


\begin{proof}
Apply Lemma (\ref{cont-lemma}) to the field of norms given by Theorem (\ref{SOT-cont-thm}), where $V$ is indeed finite dimensional, and where $\mathsf{m} = \|\cdot\|_1$.
\end{proof}

In practice, Corollary (\ref{norm-cont-corollary}) will be used when we restrict our attention to a finite dimensional subspace of $\ell^1(\Z^d)$. Indeed, using the continuity of the norms, as well as the definition of Lip-norms, we proved in \cite[Proposition 3.10]{Latremoliere05} a result concerning the continuity of Lip-norms over finite dimensional subspaces of $\ell^1(\Z^d)$, which we shall need for our work as well. Putting it together with Lemma (\ref{cont-lemma}), we have:

\begin{theorem}\label{lip-norm-cont-thm}
Let $d\in\N_\ast$ and $S$ be a finite set in $\Z^d$ which does not contain $0$. Define:
\begin{equation*}
V = \{ f \in \ell^1(\Z^d) : \forall n \not\in S \quad f(n) = 0 \} \text{.}
\end{equation*}
Then $V$ is a finite dimensional subspace of $\ell^1(\Z^d)$. Let $l$ be a continuous length function on $\U^d$. The map:
\begin{equation*}
(k,\sigma,f) \in \Xi^d\times V \longmapsto \Lip_{l,k,\sigma}(\upsilon_{k}(f))
\end{equation*}
is continuous.
\end{theorem}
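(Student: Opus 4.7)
The plan is to reduce the statement to Lemma \ref{cont-lemma}, whose proof extends verbatim to seminorms $\mathsf{n}_{k,\sigma}$ on $\upsilon_k(V)$, since it uses only the reverse triangle inequality $|\mathsf{n}(x)-\mathsf{n}(y)|\leq \mathsf{n}(x-y)$ together with the domination hypothesis $\mathsf{n}_{c,\theta}\circ\upsilon_c\leq\mathsf{m}$. Taking $\mathsf{m}$ to be a fixed constant multiple of $\|\cdot\|_1$ on the finite-dimensional space $V$, and $\mathsf{n}_{k,\sigma}=\Lip_{l,k,\sigma}$, two properties must be verified: a uniform domination $\Lip_{l,k,\sigma}\circ\upsilon_k\leq\mathsf{m}$ valid for all $(k,\sigma)\in\Xi^d$, and pointwise continuity of $(k,\sigma)\mapsto\Lip_{l,k,\sigma}(\upsilon_k(f))$ for each fixed $f\in V$.

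For the uniform domination, fix $(k,\sigma)\in\Xi^d$ and $\omega\in\U^d_k\setminus\{1\}$. Because the operator norm $\|\cdot\|_{k,\sigma}$ is dominated by $\|\cdot\|_1$ on $\ell^1(\Z^d_k)$ and $\upsilon_k(f)$ is supported in $q_k(S)$, a direct computation using the definition of the dual action yields
\begin{equation*}
\|\upsilon_k(f)-\alpha^\omega_{k,\sigma}(\upsilon_k(f))\|_{k,\sigma}\;\leq\;\sum_{m\in S}|1-\omega^m|\,|f(m)|,
\end{equation*}
where $\omega\in\U^d_k$ is identified with its image in $\U^d$. For each $m\in S$, the continuous character $\omega\in\U^d\mapsto\omega^m$ is Lipschitz with respect to $l$ with some constant $L_m$, and this bound descends to every closed subgroup of $\U^d$. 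Setting $L=\max_{m\in S}L_m$ then gives $\Lip_{l,k,\sigma}(\upsilon_k(f))\leq L\|f\|_1$, uniformly in $(k,\sigma)\in\Xi^d$.

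Pointwise continuity, the main technical obstacle, is the content of \cite[Proposition 3.10]{Latremoliere05}. Its proof splits the supremum defining $\Lip_{l,k,\sigma}(\upsilon_k(f))$ according to $l(\omega)\geq\delta$ or $l(\omega)<\delta$ for a suitable $\delta>0$. The portion over $\{l(\omega)<\delta\}$ is controlled by the uniform estimate of the preceding paragraph applied to the tail, and is arbitrarily small for $\delta$ small. The portion over $\{l(\omega)\geq\delta\}$ involves a supremum of quantities of the form $\|\pi_{k,\sigma}(\upsilon_k(h_\omega))\|_{\B^d}/l(\omega)$, where $h_\omega(m)=(1-\omega^m)f(m)$ depends continuously on $\omega$ inside the fixed finite-dimensional space $V$; the joint continuity of $(k,\sigma,g)\mapsto\|\pi_{k,\sigma}(\upsilon_k(g))\|_{\B^d}$ from Corollary \ref{norm-cont-corollary} then handles this piece. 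The real subtlety lies in comparing the suprema across the varying subgroups $\U^d_{k_n}\subseteq\U^d$, which is done via the component-wise analysis underlying the proof of Theorem \ref{SOT-cont-thm}: for each coordinate $j$, either $k^j=\infty$ and $\U_{k_n^j}$ fills out $\U$ in a controlled way, or $k^j$ is finite and $k_n^j$ is eventually equal to $k^j$.

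With the uniform domination and the pointwise continuity both in hand, the seminorm version of Lemma \ref{cont-lemma} applied to $\mathsf{n}_{k,\sigma}=\Lip_{l,k,\sigma}$ and $\mathsf{m}=L\|\cdot\|_1$ yields the desired joint continuity of $(k,\sigma,f)\mapsto\Lip_{l,k,\sigma}(\upsilon_k(f))$ on $\Xi^d\times V$, where $V$ carries the norm $\|\cdot\|_1$ and $\Xi^d\times V$ is endowed with the product topology.
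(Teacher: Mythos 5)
Your proposal is correct and takes essentially the same route as the paper: pointwise continuity in $(k,\sigma)$ is quoted from \cite[Proposition 3.10]{Latremoliere05}, a dominating norm on the finite-dimensional space $V$ is produced (your $L\|\cdot\|_1$ plays the role of the paper's $\mathsf{m}(f)=\sup_{\omega}\|f-\alpha^\omega(f)\|_1/l(\omega)$, both resting on the finiteness of $\sup_{\omega\neq 1}|1-\omega^m|/l(\omega)$ for $m\in S$), and Lemma (\ref{cont-lemma}) then yields the joint continuity. Your observation that the proof of Lemma (\ref{cont-lemma}) applies verbatim to seminorms is a minor refinement of the same argument.
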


\begin{proof}
By \cite[Proposition 3.10]{Latremoliere05}, for a fixed $f \in V$, the function
\begin{equation*}
(k,\sigma)\in\Xi^d \mapsto \Lip_{k,\sigma}(\upsilon_k(f))
\end{equation*}
is continuous. Now, for any $f \in V$ and $\omega\in\U^d$, we set:
\begin{equation*}
\alpha^\omega(f) : n \in \Z^d \mapsto \omega^n f(n)\text{.}
\end{equation*}
We see that $\alpha$ thus defined is an ergodic action of $\U^d$ on $\ell^1(\Z^d)$. For any $f\in V$, we set:
\begin{equation*}
\mathsf{m}(f) = \sup\left\{ \frac{\|f-\alpha^\omega(f)\|_1}{l(\omega)} : \omega \in \U^d \right\} \text{.}
\end{equation*} 
By construction, for all $(k,\sigma,f)\in \Xi^d\times V$ we have $\Lip_{l,k,\sigma}(f) \leq \mathsf{m}(f)$. Indeed, we note that by Theorem-Definition (\ref{dual-action-thmdef}), the dual action $\alpha_{k,\sigma}$ restricted to $\U^d_k$ and the action $\alpha$ agree when applied to elements of $\ell^1(\Z^d)$ --- and thus in particular to elements in $V$.

Moreover, $\mathsf{m}$ takes finite values on $V$, which is seen using the finite dimensionality of $V$ as follows. One checks that $\mathsf{m}(\delta_p) < \infty$ for all $p \in S$ with $\delta_p$, as usual, the Dirac measure at $p$. Thus $\mathsf{m}$ takes finite values on $V$ (as $\mathsf{m}$ is a seminorm and $V$ is the finite dimensional space spanned by $\{\delta_p : p \in S\}$). It is then easy to check that $\mathrm{m}$ is a norm as $\alpha$ only fixes the elements of the space $\C\delta_0$ and since $0\not\in S$. Thus Lemma (\ref{cont-lemma}) applies to give our theorem.
\end{proof}

Finite dimensionality and the exclusion of $0$ from the support of the elements of $V$ is used in Theorem (\ref{lip-norm-cont-thm}) so that we can find a norm which dominates all the Lip-norms; in general $\mathsf{m}(f)$, as defined in the proof of Theorem (\ref{lip-norm-cont-thm}), for $f\in\ell^1(\Z^d)$, is neither finite nor zero only at zero, and thus does not define a norm.

\section{Quantum Propinquity and Quantum Tori}

We establish the main convergence result of this paper in this section. We only need bridges with $\B^d = \B(\ell^2(\Z^d))$ as our bridge C*-algebra, the various representations $\pi_{k,\sigma}$ ($(k,\sigma)\in\Xi^d$) of Notation (\ref{rep}) as our unital *-mono-morphisms, and pivots elements which are trace class and diagonal in the canonical basis of $\ell^2(\Z^d)$. We begin with introducing our pivot elements and obtain an estimate on the norm of commutators between our pivot elements and our C*-algebras.

\subsection{Pivot Elements}

 To choose these pivot elements, we will use the following notations, and derive an estimate on the norm of some commutators.

\begin{notation}
Let $d\in\N_\ast$. Let $(\lambda_n)_{n\in\Z^d}$ be a family of complex numbers indexed by $\Z^d$. The operator $\Diag{\lambda_n}{n\in\Z^d}$ on $\ell^2(\Z^d)$ is defined by setting for all $n \in \Z^d$:
\begin{equation*}
\Diag{\lambda_n}{n\in\Z^d}e_n = \lambda_n e_n\text{,}
\end{equation*}
where $(e_n)_{n\in\Z^d}$ is the canonical Hilbert basis of $\ell^2(\Z^d)$ defined in Notation (\ref{canonical-basis-notation}).
\end{notation}

\begin{notation}
For any $d\in\N_\ast$ and any $n = (n_1,\ldots,n_j)\in\Z^d$, we define:
\begin{equation*}
|n| = \sum_{j=1}^d |n_j|\text{.}
\end{equation*}
We note that $|\cdot|$ thus defined is the length  function on $\Z^d$ associated with the canonical generators of $\Z^d$. Thus, in particular, for any $n,m\in\Z^d$ we have $| |n| - |m| | \leq |n-m| \leq |n| + |m|$. 
\end{notation}

\begin{notation}\label{wedge-notation}
For any $d\in\N_\ast$, and for any $k = (k_1,\ldots,k_d)\in \Nbar^d$, we denote by $\wedge k$ the element of $\Nbar$ defined as:
\begin{equation*}
\wedge k = \min \{ |n| : n \not\in I_k \} = \min\left\{ \left\lceil\frac{k_j-1}{2}\right\rceil : j=1,\ldots,d \right\} + 1\text{.}
\end{equation*}
\end{notation}

\begin{notation}\label{weight-notation}
Let $M,N\in\N_\ast$ be given. We define, for all $d\in\N_\ast$:
\begin{equation*}
w_{N,M} : n \in \Z^d \longmapsto \begin{cases}
1 &\text{if $|n| \leq N$,}\\
\frac{M + N - |n|}{M}&\text{ if $N\leq |n| \leq M + N$,}\\
0 &\text{otherwise.}
\end{cases}
\end{equation*}
\end{notation}

The proper choice of a bridge for a given pair of quantum tori, or a pair of a fuzzy torus with a quantum torus, will depend on the following estimate.

\begin{theorem}\label{qt-commutator-thm}
Let $d\in\N_\ast$, $k\in\Nbar^d$, and $\sigma$ a skew-bicharacter of $\Z^d_k$. Let $N,M \in \N_\ast$ such that $N+M < \wedge k$. Define, using Notation (\ref{weight-notation}):
\begin{equation*}
\omega_{N,M} = \Diag{ w_{N,M}(n) }{n\in\Z^d}\text{.}
\end{equation*}
Then $\omega_{N,M}$ is a finite rank operator such that, for all $m \in I_k\subseteq \Z^d$, we have:
\begin{equation*}
\left\| \left[\omega_{N,M},\pi_{k,\sigma}\left(\delta_{q_k(m)}\right)\right]\right\|_{\B^d} \leq \frac{|m|}{M} \text{,}
\end{equation*}
where $q_k : \Z^d\rightarrow \Z^d_k$ is the canonical surjection and $\pi_{c,\theta}$ is given by Notation (\ref{rep}) for all $(c,\theta)\in\Xi^d$.
\end{theorem}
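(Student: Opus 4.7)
The plan is to compute the commutator on the canonical Hilbert basis $(e_r)_{r \in \Z^d}$ of $\ell^2(\Z^d)$ and reduce its operator norm to a supremum of differences of diagonal weights. By Notation (\ref{rep}), Theorem (\ref{representation-thm}), and the unwinding already performed in the proof of Theorem (\ref{SOT-cont-thm}), one has
\begin{equation*}
\pi_{k,\sigma}(\delta_{q_k(m)})\, e_r \;=\; \sigma(r,m)\, e_{\tau(r)}, \qquad \tau(r) = \cyclic{r-m}{k} + \cycle{r}{k}.
\end{equation*}
Uniqueness of the decomposition $\Z^d = I_k + k\Z^d$ shows that $\tau$ is a bijection of $\Z^d$ with $\cycle{\tau(r)}{k} = \cycle{r}{k}$. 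Since $\omega_{N,M}$ is diagonal with eigenvalue $w_{N,M}(r)$ on $e_r$, the commutator sends $e_r$ to $\sigma(r,m)(w_{N,M}(\tau(r)) - w_{N,M}(r))\, e_{\tau(r)}$; as $(e_{\tau(r)})_{r \in \Z^d}$ is an orthonormal family, the operator norm equals $\sup_{r \in \Z^d} |w_{N,M}(\tau(r)) - w_{N,M}(r)|$. Finite rank of $\omega_{N,M}$ is immediate from the finite support of $w_{N,M}$.

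To bound this supremum by $|m|/M$, observe that $w_{N,M}(n)$ depends only on $|n|$ through $\tilde w(x) = \max(0, \min(1, (N+M-x)/M))$, which is $1/M$-Lipschitz with values in $[0,1]$. First, if $r \notin I_k$, then $\cycle{r}{k} \neq 0$ forces $\cycle{\tau(r)}{k} \neq 0$, so $\tau(r) \notin I_k$; by definition of $\wedge k$, both $|r|$ and $|\tau(r)|$ exceed $N+M$, hence both weights vanish. Second, if $r \in I_k$ and $r - m \in I_k$, then $\tau(r) = r - m$, so $\bigl||r| - |\tau(r)|\bigr| \leq |m|$ and the Lipschitz property of $\tilde w$ yields the desired bound.

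The remaining case, where $r \in I_k$ but $r - m \notin I_k$, is the crux. Here $|r-m| \geq \wedge k > N+M$, so $|r| \geq \wedge k - |m| > N+M - |m|$, whence $w_{N,M}(r) \leq \max(0, (N+M-|r|)/M) \leq |m|/M$. For the symmetric bound on $\tau(r)$, write $\tau(r) = r - m + u$ with $u \in k\Z^d \setminus \{0\}$; then $\tau(r) + m = r + u$, and since the decomposition of $r + u$ into an $I_k$-part and a $k\Z^d$-part is unique and equals $r + u$ itself, we conclude $r + u \notin I_k$. Thus $|\tau(r) + m| \geq \wedge k$ and $|\tau(r)| \geq \wedge k - |m| > N + M - |m|$, giving $w_{N,M}(\tau(r)) \leq |m|/M$. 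Since both weights are non-negative and bounded by $|m|/M$, their difference is dominated by $|m|/M$.

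The principal obstacle is precisely this last case: owing to wrap-around in the reduction to $I_k$, the displacement $|\tau(r) - r|$ may be as large as $\wedge k - |m|$, so the naive Lipschitz estimate on $\tilde w$ does not suffice. The key insight is to exploit both directions of the failure $r - m \notin I_k$ to lower-bound $|r|$ and $|\tau(r)|$ simultaneously, trapping both weights in the narrow range $[0, |m|/M]$ where the bound follows from non-negativity alone.
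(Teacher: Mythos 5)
Your argument is correct, and it follows the paper's skeleton --- compute the commutator on the canonical basis via Equation (\ref{pi-action-eq}), reduce the norm to a supremum of weight differences $|w_{N,M}(\cyclic{r-m}{k}+\cycle{r}{k})-w_{N,M}(r)|$, dispose of $r\notin I_k$ by noting both weights vanish beyond $\wedge k > N+M$ --- but it diverges from the paper precisely at the crux, the wrap-around case. The paper first proves the displacement estimate $\left|\left|\cyclic{n-m}{k}\right|-|n|\right|\leq |m|$ for all $n,m\in I_k$ (Equation (\ref{comm-weight-eq1})) by a coordinate-wise sign analysis of the three possibilities for $n_j-m_j$, and then verifies the bound $|m|/M$ through an enumeration of which of the three regions ($|n|\leq N$, $N\leq|n|\leq N+M$, $|n|\geq N+M$) each of $|n|$ and $|\cyclic{n-m}{k}|$ falls into --- in effect re-proving by hand the $\tfrac{1}{M}$-Lipschitz property of the radial profile that you invoke directly. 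You instead bypass the displacement estimate entirely when wrap-around occurs: from $r-m\notin I_k$ you get $|r|\geq \wedge k-|m|>N+M-|m|$, and writing $\cyclic{r-m}{k}+m = r+u$ with $u\in k\Z^d\setminus\{0\}$, the uniqueness of the partition $\mathscr{P}_k$ gives $r+u\notin I_k$ and hence $|\cyclic{r-m}{k}|\geq\wedge k-|m|$ as well, trapping both weights in $[0,|m|/M]$; the no-wrap case is then just the Lipschitz bound. This is a genuine simplification: it avoids both the delicate sign bookkeeping and the long case enumeration, at the cost of not establishing the (stronger, but here unnecessary) geometric fact (\ref{comm-weight-eq1}); the hypothesis $N+M<\wedge k$ is used in exactly the same way in both proofs, and your final passage from the basis estimate to the operator norm (orthogonality of the image family, Pythagoras) matches the paper's concluding computation.
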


\begin{proof}
Let $m \in I_k$. Let $\upsilon_k : \ell^1(\Z^d)\rightarrow\ell^2(\Z^d_k)$ be given by Lemma (\ref{upsilon-lemma}). An easy computation shows that $\upsilon_k(\delta_m) = \delta_{q_k(m)}$, and moreover, for all $n\in\Z^d$:
\begin{equation}\label{comm-pi-computation-eq}
\pi_{k,\sigma}(\upsilon_{k}(\delta_m)) e_r = \sigma(r,m)e_{\cyclic{r - m}{k} + \cycle{r}{k}}
\end{equation}
by Equation (\ref{pi-action-eq}) in the proof of Theorem (\ref{SOT-cont-thm}).

We thus easily compute that, for all $n\in\Z^d$:
\begin{multline*}
\left[\omega_{N,M},\pi_{k,\sigma}\left(\delta_{q_k(m)}\right)\right]e_n = \sigma(n,m)\\\left(w_{N,M}(\cyclic{n - m}{k} + \cycle{n}{k})- w_{N,M}(n)\right) e_{\cyclic{n - m}{k}+\cycle{n}{k}}\text{.}
\end{multline*}

Using Notation (\ref{weight-notation}), we first observe that if $n \not\in I_k$, then, $\cyclic{n-m}{k}+\cycle{n}{k} \not\in I_k$ either, and thus we have:
\begin{equation}\label{comm-weight-eq0}
w_{N,M} (n) = w_{N,M}(\cyclic{n-m}{k}+\cycle{n}{k}) = 0\text{,}
\end{equation}
since $|n|$ and $|\cycle{n}{k}+\cyclic{n}{k}|$ are both greater than $\wedge k$, and thus greater than $N+M$.

We now work now with $n \in I_k$ (so $\cycle{n}{k} = 0$). Let $j \in \{1,\ldots,d\}$ and write $n=(n_1,\ldots,n_d)$, $m=(m_1,\ldots,m_d)$,$k=(k_1,\ldots,k_d)$.  We have the following cases:
\begin{itemize}
\item If $n_j - m_j \in I_{k_j}$ then $\cyclic{n_j - m_j}{k_j} = n_1 - m_j$, so:
\begin{equation*}
\left| |\cyclic{n_j - m_j}{k_1}| - |n_j| \right| = | |n_j - m_j|-|n_j| |\leq |m_j|\text{.}
\end{equation*}
\item If $n_j - m_j < \frac{-k_j}{2}$ then:
\begin{equation*}
\left|\cyclic{n_j-m_j}{k_j}\right| = n_j - m_j + k_j\text{.}
\end{equation*}
Then $2n_j - 2m_j < -k_j$, so $k_j - m_j + 2n_j \leq m_j$. On the other hand, note that since $n_j, m_j \in I_{k_j}$ and $n_j-m_j<\frac{-k_j}{2}$, we must have $n_j \leq 0$ and $m_j \geq 0$; thus:
\begin{equation*}
\left|\left|\cyclic{n_j-m_j}{k_j}\right| - |n_j|\right| = n_j-m_j+k_j+n_j = 2n_j-m_j+k_j \leq m_j = |m_j|\text{.}
\end{equation*}
\item Similarly, $n_j - m_j \geq \frac{k_j}{2}$, then we get:
\begin{equation*}
\left|\cyclic{n_j-m_j}{k_j} \right| = k_j-n_j+m_j\text{,}
\end{equation*}
while $2n_j-2m_j \geq k_j$, i.e. $2n_j-m_j-k_j \geq -m_j$. In this case, we must have $n_j\geq 0$ and $m_j\leq 0$; thus:
\begin{equation*}
\left|\left|\cyclic{n_j-m_j}{k_j}\right| - |n_j|\right| = k_j-n_j+m_j-n_j = -2n_j+m_j+k_j \leq -m_j = |m_j|\text{.}
\end{equation*}
\end{itemize}

Thus, we always have, for all $n,m\in I_k$:
\begin{equation}\label{comm-weight-eq1}
\left| \left|\cyclic{n-m}{k}\right| -|n| \right| \leq |m| \text{.}
\end{equation} 

Consequently, by Notation (\ref{weight-notation}) and Equations (\ref{comm-weight-eq0}), (\ref{comm-weight-eq1}), we have the following cases:

\begin{itemize}
\item If $\{|n|,|\cyclic{n-m}{k}|\} \subseteq \{0,\ldots,N\}$ or $\{|n|,|\cyclic{n-m}{k}\}\subseteq \{N+M,\ldots\}$ then:
\begin{equation*}
|w_{N,M}(\cyclic{n-m}{k}) - w_{N,M}(n)| = 0\leq \frac{|m|}{M}\text{.}
\end{equation*} 

\item If $\{|n|,|\cyclic{n-m}{k}|\} \subseteq \{N,\ldots,M+N\}$ then:
\begin{equation*}
\begin{split}
|w_{N,M}(\cyclic{n-m}{k}) - w_{N,M}(n)| &= \left|\frac{ |n| - |\cyclic{n-m}{k}|}{M}\right|\\
&\leq \frac{|m|}{M}\text{.}
\end{split}
\end{equation*}
\item If $0\leq |n| \leq N$ and $N\leq\cyclic{n-m}{k}\leq N+M$ then:
\begin{equation*}
\begin{split}
|w_{N,M}(\cyclic{n-m}{k}) - w_{N,M}(n)| &= 1 - \frac{N+M-|\cyclic{n-m}{k}|}{M}\\
&\leq \frac{|\cyclic{n-m}{k}| - N}{M}\\
&\leq \frac{|\cyclic{n-m}{k}|-|n|}{M} \leq \frac{|m|}{M}\text{.}
\end{split}
\end{equation*}
\item The same computation as above shows that if $|\cyclic{n-m}{k}|\leq N$ and $N\leq |n|\leq N+M$, then again:
\begin{equation*}
|w_{N,M}(\cyclic{n-m}{k}) - w_{N,M}(n)| \leq \frac{|m|}{M}\text{.}
\end{equation*} 
\item Assume now that $|n|\geq N+M$ while $N\leq|\cyclic{n-m}{k}|\leq N+M$. Then:
\begin{equation*}
\begin{split}
|w_{N,M}(\cyclic{n-m}{k}) - w_{N,M}(n)| &= \frac{M+N-|\cyclic{n-m}{k}|}{M}\\
&\leq \frac{|n|-|\cyclic{n-m}{k}|}{M} \leq \frac{|m|}{M}\text{.}
\end{split}
\end{equation*}
\item A similar computation shows that if $N\leq |n|\leq M + N$ and $|\cyclic{n-m}{k}|\geq M+N$ then:
\begin{equation*}
|w_{N,M}(\cyclic{n-m}{k}) - w_{N,M}(n)| \leq \frac{|m|}{M}\text{.}
\end{equation*} 
\item Last, if either $|n|\leq N$ and $|\cyclic{n-m}{k}|\geq N+M$, or $|n|\geq N+M$ and $|\cyclic{n-m}{k}\leq N$ then:
\begin{equation*}
\begin{split}
\frac{|m|}{M}&\geq \frac{| |n| - |\cyclic{n-m}{k}| |}{M} \\
&\geq 1 = |w_{N,M}(\cyclic{n-m}{k}) - w_{N,M}(n)|\text{.}
\end{split}
\end{equation*}
\end{itemize}

Thus, by Equation (\ref{comm-pi-computation-eq}), for all $n,m\in I_k$ we have:
\begin{equation*}
\left\|\left[\omega_{N,M},\pi_{k,\sigma}\left(\delta_{q_k(m)}\right)\right]e_n\right\|_2 \leq \frac{|m|}{M}\text{,}
\end{equation*}
while for all $m\in I_k, n\not\in I_k$ we have:
\begin{equation*}
\left\|\left[\omega_{N,M},\pi_{k,\sigma}\left(\delta_{q_k(m)}\right))\right]e_n\right\|_2 = 0\text{.}
\end{equation*}
Let $\xi = (\xi_n)_{n\in\Z^d} \in \ell^2(\Z^d)$. Then for all $m\in I_k$ we have, using the Pythagorean Theorem (as $\left[\omega_{N,M},\pi_{k,\sigma}\left(\delta_{q_k(m)}\right)\right]$ maps the canonical basis to an orthogonal family by Equation (\ref{pi-action-eq})):
\begin{equation*}
\begin{split}
\left\|\left[\omega_{N,M},\pi_{k,\sigma}\left(\delta_{q_k(m)}\right)\right]\xi\right\|_2^2&= \sum_{n\in \Z^d} |\xi_n|^2\left\|\left[\omega_{N,M},\pi_{k,\sigma}\left(\delta_{q_k(m)}\right)\right]e_n\right\|_2^2\\
&=\sum_{n\in I_k} |\xi_n|^2\left\|\left[\omega_{N,M},\pi_{k,\sigma}\left(\delta_{q_k(m)}\right)\right]e_n\right\|_2^2\\
&\leq \sum_{n\in I_k} |\xi_n|^2\left(\frac{|m|^2}{M^2} \right) \leq \left(\frac{|m|^2}{M^2}\right)\|\xi\|_2^2 \text{.}
\end{split}
\end{equation*}
Hence our result is proven.
\end{proof}


\subsection{A new proof of Convergence}
We are now almost ready for the main result of this paper. As its proof is rather long, we isolate two lemmas which will prove useful in our construction.

\begin{notation}
Let $\mathfrak{L}_1$ be the ideal of trace-class operators on $\ell^2(\Z^d)$, whose norm is denoted by $\|\cdot\|_{\mathfrak{L}_1}$. We denote by $\tr$ the standard trace on the C*-algebra $\B(\ell^2(\Z^d))$ of bounded operators on $\ell^2(\Z^d)$ and we recall that $\|A\|_{\mathfrak{L}_1} = \tr(|A|)$ for all $A\in\mathfrak{L}_1$, with $|A|=\left(A^\ast A\right)^{\frac{1}{2}}$.
\end{notation}

\begin{lemma}\label{density-trace-class-lemma}
For all $N\in\N$, let $P_N$ be the projection on the span of $\{ e_n : |n| \leq N\}$ in $\ell^2(\Z^d)$, where $(e_m)_{m\in\Z^d}$ is the canonical Hilbert basis of $\ell^2(\Z^d)$. For any positive trace class operator $A$ on $\ell^2(\Z^d)$ with trace $1$, and for any $\varepsilon > 0$, there exists $N \in \N$  and a positive trace class operator $B$ of trace $1$ such that $P_N B P_N = B$ and $\|A-B\|_{\mathfrak{L}_1} < \varepsilon$. Note that for such an $N$, $\varepsilon$ and $B$, we have $P_n B P_n = B$ for all $n\geq N$, since $P_NP_n = P_N$.
\end{lemma}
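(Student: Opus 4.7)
The plan is to define $B$ as a normalized compression of $A$ to the range of $P_N$ for $N$ large. Concretely, I would set $A_N = P_N A P_N$ and, for $N$ sufficiently large so that $t_N := \tr(A_N) > 0$, put $B = t_N^{-1} A_N$. By construction $B$ is positive and finite rank (hence trace class), has trace $1$, and satisfies $P_N B P_N = B$, so the substantive work lies entirely in controlling $\|A - B\|_{\mathfrak{L}_1}$.

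The main technical step is to show $\|A - A_N\|_{\mathfrak{L}_1} \to 0$ as $N \to \infty$. I would use the spectral decomposition of $A$ as a positive trace-class operator: there exist an orthonormal family $(\xi_i)_{i \in \N}$ in $\ell^2(\Z^d)$ and a summable sequence of nonnegative reals $(\lambda_i)_{i\in\N}$ with $\sum_i \lambda_i = 1$ such that $A = \sum_i \lambda_i \xi_i \otimes \xi_i^\ast$, where I write $\xi \otimes \eta^\ast : \zeta \mapsto \langle \eta,\zeta\rangle \xi$ for rank-one operators. Given $\eta > 0$, choose $K \in \N$ with $\sum_{i > K}\lambda_i < \eta$, and set $A_K = \sum_{i \leq K} \lambda_i\, \xi_i \otimes \xi_i^\ast$; then $\|A - A_K\|_{\mathfrak{L}_1} < \eta$. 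The elementary identity
\begin{equation*}
a \otimes a^\ast - b \otimes b^\ast = (a-b)\otimes a^\ast + b \otimes (a-b)^\ast
\end{equation*}
gives $\|a\otimes a^\ast - b\otimes b^\ast\|_{\mathfrak{L}_1} \leq \|a-b\|_2(\|a\|_2+\|b\|_2)$. Since $P_N\xi_i \to \xi_i$ in $\ell^2(\Z^d)$ for each fixed $i$, we can choose $N$ large enough that $\|A_K - P_N A_K P_N\|_{\mathfrak{L}_1} < \eta$. Combined with the obvious contractivity $\|P_N(A-A_K)P_N\|_{\mathfrak{L}_1} \leq \|A-A_K\|_{\mathfrak{L}_1} < \eta$, the triangle inequality delivers $\|A - A_N\|_{\mathfrak{L}_1} < 3\eta$.

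Applying this with $\eta = \varepsilon/6$, we obtain $\|A-A_N\|_{\mathfrak{L}_1} < \varepsilon/2$ for $N$ large, whence $|t_N - 1| = |\tr(A_N - A)| \leq \|A - A_N\|_{\mathfrak{L}_1} < \varepsilon/2$; in particular $t_N > 0$, so $B$ is well-defined. A short computation gives
\begin{equation*}
\|A_N - B\|_{\mathfrak{L}_1} = \left|1 - t_N^{-1}\right|\, t_N = |t_N - 1| < \varepsilon/2,
\end{equation*}
and a final triangle inequality yields $\|A - B\|_{\mathfrak{L}_1} < \varepsilon$, as required. The closing observation that $P_n B P_n = B$ for all $n \geq N$ is then immediate from $P_n P_N = P_N$ whenever $n \geq N$.

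The only part requiring genuine argument is the first step, which is the standard assertion that, for a fixed trace-class $A$, the maps $T \mapsto TA$ and $T \mapsto AT$ send strongly convergent uniformly bounded nets of operators to trace-norm convergent nets. Once the spectral truncation above is in place, the rest is elementary book-keeping.
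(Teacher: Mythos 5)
Your argument is correct and takes essentially the same route as the paper: a spectral truncation of $A$, the strong convergence $P_N \to 1$ applied to the finitely many retained eigenvectors together with the rank-one trace-norm estimate, and a final renormalization costing at most $|t_N - 1|$. The only cosmetic difference is that you take $B$ to be the normalized compression $\tr(P_N A P_N)^{-1} P_N A P_N$ of $A$ itself, absorbing the spectral tail via the trace-norm contractivity of $T \mapsto P_N T P_N$, whereas the paper normalizes the compression of the truncated operator; the estimates are identical, and the only point to add is the harmless reduction to $\varepsilon \leq 1$ so that $t_N > 0$ indeed follows from $|t_N - 1| < \varepsilon/2$.
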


\begin{proof}
For any pair of vectors $\zeta,\chi \in \ell^2(\Z^d)$, we denote the inner product of $\ell^2(\Z^d)$ of $\zeta$ and $\chi$ as $\inner{\zeta}{\chi}$.

Since $A$ is a positive compact operator with trace $1$, let $(\xi_n)_{n\in\N}$ be an orthonormal basis of $\ell^2(\Z^d)$ such that:
\begin{equation*}
A = \sum_{n\in\Z^d} \lambda_n \inner{\cdot}{\xi_n}\xi_n\text{,}
\end{equation*}
where $\lambda_n \in [0,1]$ for all $n\in\N$, and $\sum_{n\in\Z^d}\lambda_n = 1$. 

Let $N_1 \in \N$ be chosen so that $\sum_{\{n \in \Z^d : |n|\geq m\}} \lambda_n < \frac{1}{4}\varepsilon$ for all $m\geq N_1$. Let:
\begin{equation*}
A' = \sum_{\{n\in\Z^d : |n|\leq N_1\}} \lambda_n \inner{\cdot}{\xi_n}\xi_n\text{.}
\end{equation*}
By construction:
\begin{equation*}
\|A-A'\|_{\mathfrak{L}_1} <\frac{\varepsilon}{4}\text{.}
\end{equation*}

Let $N\in \N$ so that, for all $m\in\Z^d$ with $|m|\leq N_1$, we have:
\begin{equation}\label{trace-eq0}
\left\| P_N\xi_m - \xi_m \right\|_2\leq \frac{\varepsilon}{8}\text{,}
\end{equation}
which can be found since $\{ \xi_m : m\in\Z^d\text{ and }|m|\leq N_1\}$ is finite and $(P_n)_{n\in\N}$ converges to the identity in the strong operator topology.

Let:
\begin{equation*}
\eta_m = P_N\xi_m = \sum_{\{n\in \Z^d : |n| \leq N\}} \inner{\xi_m}{e_n}e_n
\end{equation*}
for all $m\in\Z^d$, $|m|\leq N_1$.

Define:
\begin{equation*}
A'' = \sum_{\{m\in\Z^d : |m| \leq N_1\}} \lambda_m \inner{\cdot}{\eta_m}\eta_m\text{,}
\end{equation*}
and note that $A''$, as the sum of positive operators, is a positive operator; moreover $A''$ is a finite rank operator such that $P_{N}A''P_{N}=A''$.

We now have:
\begin{equation}\label{trace-eq1}
\begin{split}
\|A-A''\|_{\mathfrak{L}_1} &\leq \|A - A'\|_{\mathfrak{L}_1} + \|A'-A''\|_{\mathfrak{L}_1}\\
&\leq \frac{\varepsilon}{4} + \left\|\sum_{\{n \in \Z^d:|n|\leq N_1\}} \lambda_n \left(\inner{\cdot}{\xi_n}\xi_n - \inner{\cdot}{\eta_n}\eta_n\right)\right\|_{\mathfrak{L}_1}\\
&\leq \frac{\varepsilon}{4} + \sum_{\{n\in\Z^d : |n|\leq N_1\}} \lambda_n \left( \|\inner{\cdot}{\xi_n-\eta_n}\xi_n\|_{\mathfrak{L}_1} + \| \inner{\cdot}{\eta_n}(\xi_n-\eta_n) \|_{\mathfrak{L}_1}   \right)\text{.}
\end{split}
\end{equation}

Now, for any $\chi,\zeta\in\ell^2(\Z^d)$, we have $\left(\inner{\cdot}{\chi}\zeta\right)^\ast = \inner{\cdot}{\zeta}\chi$, so that:
\begin{equation*}
\left|\inner{\cdot}{\chi}\zeta\right| = \|\chi\|_2\|\zeta\|_2\inner{\cdot}{\frac{\zeta}{\|\zeta\|_2}}\frac{\zeta}{\|\zeta\|_2}\text{,}
\end{equation*}
so:
\begin{equation}\label{trace-eq2}
\|\inner{\cdot}{\chi}\zeta\|_{\mathfrak{L}_1} = \|\chi\|_2\|\zeta\|_2\text{.}
\end{equation}
From Inequalities (\ref{trace-eq0}),(\ref{trace-eq1}), and (\ref{trace-eq2}), we conclude:
\begin{equation*}
\|A-A''\|_{\mathfrak{L}_1} \leq \frac{1}{4}\varepsilon + \sum_{\{n\in\Z^d:|n|\leq N_1\}} \lambda_n\left(\frac{\varepsilon}{8} + \frac{\varepsilon}{8}\right) \leq \frac{1}{2}\varepsilon\text{.}
\end{equation*}

To conclude, let $B = \frac{1}{\|A''\|_{\mathfrak{L}_1}}A''$. Now, $B$ satisfies $P_{N}BP_{N} = B$ as $A''$ does, and since $A''$ is positive, so is $B$. Moreover, $\tr(B) = \|B\|_{\mathfrak{L}_1} = 1$ as, again, $B$ is positive.

Moreover:

\begin{equation*}
\begin{split}
\|A-B\|_{\mathfrak{L}_1} &\leq \|A-A''\|_{\mathfrak{L}_1} + \left\|A'' - \frac{1}{\|A\|''}A''\right\|_{\mathfrak{L}_1}\\
&\leq \|A-A''\|_{\mathfrak{L}_1} + |1-\|A''\|_{\mathfrak{L}_1}|\\
&= \|A-A''\|_{\mathfrak{L}_1} + |\|A\|_{\mathfrak{L}_1} - \|A''\|_{\mathfrak{L}_1}|\\
&\leq 2\|A-A''\|_{\mathfrak{L}_1} \leq \varepsilon\text{.}
\end{split}
\end{equation*}
This concludes our lemma.
\end{proof}

We can now apply Lemma (\ref{density-trace-class-lemma}) for the following construction, which is essential to the computation of the height of the bridges to come.

\begin{lemma}\label{F_N-lemma}
Let $\mathfrak{L}_1^{1+}$ be the set of all positive trace class operators on $\ell^2(\Z^d)$ of trace $1$. For any $A\in\mathfrak{L}_1^{1+}$, we define:
\begin{equation*}
\psi_A : T \in \B(\ell^2(\Z^d)) \longmapsto \tr(AT)\text{.}
\end{equation*}
Let $\sigma$ be a skew-bicharacter of $\Z^d$ and $l$ be a continuous length function on $\U^d$. Let $\varepsilon>0$. There exists $N\in\N$ and a finite set $\mathfrak{F}_N$ of $\mathfrak{L}_1^+$ such that:
\begin{equation*}
\Haus{\Kantorovich{\Lip_{l,\infty^d,\sigma}}}(\StateSpace(\A_{\infty^d,\sigma}),\{ \psi_A\circ\pi_{\infty^d,\sigma} : A\in \mathfrak{F}_N \}) \leq \varepsilon
\end{equation*}
and
\begin{equation*}
\forall A \in \mathfrak{F}_N\quad P_N A P_N = P_N A = A P_N = A
\end{equation*}
where $P_N$ is the projection of $\ell^2\left(\Z^d\right)$ on the span of $\{e_n : |n|\leq N\}$, with $(e_n)_{n\in\Z^d}$ the canonical Hilbert basis of $\ell^2\left(\Z^d\right)$.
\end{lemma}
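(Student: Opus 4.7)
The plan is to combine compactness of the state space for $\Kantorovich{\Lip_{l,\infty^d,\sigma}}$ with a density argument for normal states from $\pi_{\infty^d,\sigma}$, then to apply Lemma \ref{density-trace-class-lemma} to bring the approximating density matrices into the required finite-rank form. First, since $\Lip_{l,\infty^d,\sigma}$ is a Lip-norm, $(\StateSpace(\A_{\infty^d,\sigma}),\Kantorovich{\Lip_{l,\infty^d,\sigma}})$ is compact, so I choose a finite $\frac{\varepsilon}{3}$-net $\{\varphi_1,\ldots,\varphi_m\}$. It then suffices to produce, for each $i\in\{1,\ldots,m\}$, an operator $A_i\in\mathfrak{L}_1^{1+}$ satisfying $P_N A_i P_N = A_i$ for an $N$ common to all $i$ together with $\Kantorovich{\Lip_{l,\infty^d,\sigma}}(\varphi_i,\psi_{A_i}\circ\pi_{\infty^d,\sigma})\leq\frac{2\varepsilon}{3}$, since the triangle inequality then shows $\mathfrak{F}_N=\{A_1,\ldots,A_m\}$ is an $\varepsilon$-net in $\StateSpace(\A_{\infty^d,\sigma})$.

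The crux is establishing that $\mathcal{N}=\{\psi_A\circ\pi_{\infty^d,\sigma}:A\in\mathfrak{L}_1^{1+}\}$ is weak-$\ast$ (hence Monge-Kantorovich) dense in $\StateSpace(\A_{\infty^d,\sigma})$. Since $\mathcal{N}$ is convex, by the Hahn-Banach separation theorem in the weak-$\ast$ topology it suffices to show that for every $x\in\sa{\A_{\infty^d,\sigma}}$,
\begin{equation*}
\sup_{\varphi\in\StateSpace(\A_{\infty^d,\sigma})}\varphi(x) \;=\; \sup_{A\in\mathfrak{L}_1^{1+}}\psi_A(\pi_{\infty^d,\sigma}(x))\text{.}
\end{equation*}
Indeed, the right-hand side equals $\sup_{\|\xi\|_2=1}\langle\pi_{\infty^d,\sigma}(x)\xi,\xi\rangle$, which is the top of the spectrum of $\pi_{\infty^d,\sigma}(x)$, and this equals the top of the spectrum of $x$ since $\pi_{\infty^d,\sigma}$ is faithful (hence isometric on self-adjoints); the left-hand side is also the top of the spectrum of $x$ by standard C$^\ast$-algebra theory. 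This density lets me pick $B_i\in\mathfrak{L}_1^{1+}$ with $\Kantorovich{\Lip_{l,\infty^d,\sigma}}(\varphi_i,\psi_{B_i}\circ\pi_{\infty^d,\sigma})\leq\frac{\varepsilon}{3}$ for each $i$.

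To finish, I apply Lemma \ref{density-trace-class-lemma} to each $B_i$, obtaining $A_i\in\mathfrak{L}_1^{1+}$ with $P_N A_i P_N = A_i$ (taking $N$ to be the maximum of the individual outputs) and $\|B_i-A_i\|_{\mathfrak{L}_1}\leq\delta$ for any preassigned $\delta$. The conversion from trace-norm to Monge-Kantorovich rests on the observation that for self-adjoint $a$ with $\Lip_{l,\infty^d,\sigma}(a)\leq 1$, averaging $\alpha_{\infty^d,\sigma}^\omega(a)$ against the Haar measure on $\U^d$ yields $\tau(a)\unit$, where $\tau$ is the canonical trace, so $\|a-\tau(a)\unit\|_{\infty^d,\sigma}\leq D:=\sup_{\omega\in\U^d}l(\omega)<\infty$. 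Because $\psi_{B_i}\circ\pi_{\infty^d,\sigma}$ and $\psi_{A_i}\circ\pi_{\infty^d,\sigma}$ are states, they agree on $\tau(a)\unit$, and hence
\begin{equation*}
\left|\psi_{B_i}(\pi_{\infty^d,\sigma}(a))-\psi_{A_i}(\pi_{\infty^d,\sigma}(a))\right| = \left|\tr((B_i-A_i)\pi_{\infty^d,\sigma}(a-\tau(a)\unit))\right|\leq D\,\|B_i-A_i\|_{\mathfrak{L}_1}\text{,}
\end{equation*}
so choosing $\delta=\varepsilon/(3D)$ closes the estimate. The main obstacle is the density step: it is essential that we test only against $\pi_{\infty^d,\sigma}(\A_{\infty^d,\sigma})$ rather than all of $\B^d$, so that the C$^\ast$-norm identity $\|\pi_{\infty^d,\sigma}(x)\|_{\B^d}=\|x\|_{\infty^d,\sigma}$ on self-adjoints can be invoked to match the two suprema above and drive the Hahn-Banach separation argument.
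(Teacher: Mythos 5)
Your proof is correct, and its overall architecture is the same as the paper's: a finite Monge-Kantorovich net of states from compactness, weak-$\ast$ density of the normal states $\psi_A\circ\pi_{\infty^d,\sigma}$, an application of Lemma (\ref{density-trace-class-lemma}) with a common $N$, and a uniform bound on Lipschitz elements modulo scalars to convert trace-norm closeness into Monge-Kantorovich closeness. You differ in how you justify the two sub-steps. For the density, the paper simply cites Fell (Proposition VII 5.2) for the fact that the $\pi$-normal states of a faithful representation are weak-$\ast$ dense; you instead give a self-contained Hahn--Banach separation argument, reducing to the equality of $\sup_{\varphi\in\StateSpace(\A_{\infty^d,\sigma})}\varphi(x)$ with the top of the numerical range of $\pi_{\infty^d,\sigma}(x)$ for self-adjoint $x$, which is valid since a faithful *-representation preserves spectra. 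For the scalar-translation bound, the paper uses the diameter $D$ of $\left(\StateSpace(\A_{\infty^d,\sigma}),\Kantorovich{\Lip_{l,\infty^d,\sigma}}\right)$ together with Rieffel's result that $\Lip_{l,\infty^d,\sigma}(a)\leq 1$ yields some $t\in\R$ with $\|a+t\unit\|_{\infty^d,\sigma}\leq D$; you exploit the ergodicity of the dual action to get the explicit representative $a-\tau(a)\unit$ with norm at most $\sup_{\omega\in\U^d}l(\omega)$, which is finite since $l$ is continuous on the compact group $\U^d$ and both functionals vanish on the scalar you subtract. Your variants buy self-containedness and an explicit constant tied to the length function, at the cost of being specific to the ergodic-action setting, whereas the paper's citations keep the argument generic; either way the final triangle-inequality bookkeeping ($\varepsilon/3+\varepsilon/3+\varepsilon/3$ versus the paper's $\varepsilon/2+\varepsilon/2$) closes the estimate, and the one-sided covering indeed suffices for the Hausdorff distance since each $\psi_A\circ\pi_{\infty^d,\sigma}$ is itself a state.
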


\begin{proof}
By \cite[Proposition VII 5.2]{Fell88}, since $\pi_{\infty^d,\sigma}$ is faithful, the set:
\begin{equation*}
\{ \psi_A\circ\pi_{\infty^d,\sigma} : A \in \mathfrak{L}_1^{1+}\}
\end{equation*}
is a weak* dense subset of $\StateSpace(\A_{\infty^d,\sigma})$. Since $\Kantorovich{\Lip_{l,\infty^d,\sigma}}$ metrizes the weak* topology of $\StateSpace(\A_{\infty^d,\sigma})$, and this space is compact, there exists a \emph{finite} subset $\mathfrak{F} \subseteq \mathfrak{L}_1^{1+}$ such that, for all $\varphi \in \StateSpace(\A)$, there exists $A\in\mathfrak{F}$ such that:
\begin{equation}\label{qt-states-eq1}
\Kantorovich{\Lip_{l,\infty^d,\sigma}}(\varphi,\psi_A\circ\pi_{\infty^d,\sigma}) < \frac{\varepsilon}{2} \text{.}
\end{equation}

Let $D = \diam{\StateSpace(\A_{\infty^d,\sigma})}{\Kantorovich{\Lip_{l,\infty^d,\sigma}}}$ --- since $(\StateSpace(\A_{\infty^d,\sigma}),\Kantorovich{\Lip_{l,\infty^d,\sigma}})$ is compact, $D<\infty$. For any $N \in \N$, let $P_N$ the projection on the span of $\{e_j : |j|\leq N\}$ in $\ell^2(\Z^d)$. By Lemma (\ref{density-trace-class-lemma}), since $\mathfrak{F}$ is finite, there exists $N\in\N$ and a finite set $\mathfrak{F}_N$ such that:
\begin{itemize}
\item for all $A\in\mathfrak{F}$, there exists $B \in \mathfrak{F}_N$ such that $\|A-B\|_{\mathfrak{L}_1}<\frac{\varepsilon}{2 D}$,
\item  For all $B\in\mathfrak{F}_N$, we have $P_N B P_N = B P_N = P_N B = B$.
\end{itemize}

Let $A \in \mathfrak{F}$, and choose $B\in \mathfrak{F}_N$ such that $\|A-B\|_{\mathfrak{L}_1}\leq \frac{\varepsilon}{2D}$. Let $a\in \A_{\infty^d,\sigma}$ with $\Lip_{l,\infty^d,\sigma}(a)\leq 1$. By \cite{Rieffel99}, there exists $t\in\R$ such that $\|a+t\unit\|_{\infty^d,\sigma}\leq D$. Then, 
\begin{equation}\label{qt-states-eq2}
\begin{split}
|\psi_A\circ\pi_{\infty^d,\sigma}(a) - \psi_B\circ\pi_{\infty^d,\sigma}(a)|&=|\psi_A\circ\pi_{\infty^d,\sigma}(a+t\unit) - \psi_B\circ\pi_{\infty^d,\sigma}(a+t\unit)|\\
&=|\tr( (A-B)\pi_{\infty^d,\sigma}(a+t\unit))|\\
&\leq \|(A-B)\pi_{\infty^d,\sigma}(a+t\unit)\|_{\mathfrak{L}_1}\\
&\leq \|A-B\|_{\mathfrak{L}_1}\|a+t\unit\|_{\infty^d,\sigma} \\
&\leq \frac{\varepsilon}{2D}D = \frac{\varepsilon}{2}\text{.}
\end{split}
\end{equation}

Thus, we conclude from Inequalities (\ref{qt-states-eq1}) and (\ref{qt-states-eq2}):

\begin{equation*}
\Haus{\Kantorovich{\Lip_{l,\infty^d,\sigma}}}(\StateSpace(\A_{\infty^d,\sigma}),\{ \psi_A\circ\pi_{\infty^d,\sigma} : A\in \mathfrak{F}_N \}) \leq \varepsilon\text{,}
\end{equation*}
as desired.
\end{proof}


Our proof of Theorem (\ref{qt-main}) to follow relies, in part, on the notion of continuous field of states and their relation with continuous fields of Lip-norms, as in \cite{Rieffel00}. We propose to recall the main tools from \cite{Rieffel00} we shall need now, to enhance the clarity of our exposition.

As in \cite{Rieffel00}, we assume given a finite dimensional vector space $V$ endowed with a family $(\|\cdot\|_\theta)_{\theta\in\Theta}$ of norms indexed by some compact metric space $\Theta$, such that for all $v \in V$, the map $\theta\in\Theta\mapsto \|v\|_\theta$ is continuous. Using the compactness of $\Theta$, one can then find a norm $\|\cdot\|_\ast$ on $V$ which dominates $\|\cdot\|_\theta$ for all $\theta\in\Theta$; then by Lemma (\ref{cont-lemma}), the map $(v,\theta)\in V\times\Theta\mapsto \|v\|_\theta$ is jointly continuous. Moreover, by \cite[Lemma 10.1]{Rieffel00}, the family $(\|\cdot\|'_\theta)_{\theta\in\Theta}$ of dual norms is continuous as well on the dual $V'$ of $V$.

Let $e$ be an nonzero element of $V$. Assume now that for all $\theta \in \Theta$, the norm $\|\cdot\|_\theta$ is associated to some order-unit space structure on $V$ with order-unit $e$, and let $\mathscr{S}_\theta$ be the state space for this order-unit space structure. A \emph{continuous field of states} over $\Theta$ is a function $\theta\in\Theta\mapsto \varphi_\theta \in V'$ such that $\varphi_\theta\in\mathscr{S}_\theta$ for all $\theta\in\Theta$ and $\theta\in\Theta\mapsto\varphi_\theta(v)$ is continuous for all $v\in V$. Rieffel showed in \cite[Proposition 10.9,10.10]{Rieffel00} that in this context, there exists many continuous fields of states. For our purpose, however, this will be obtained in a different manner in the proof of Theorem (\ref{qt-main}).

The main result for us is the following:

\begin{theorem}[Rieffel, Lemma 10.11 in \cite{Rieffel00}]\label{rieffel-cont-field-thm}
Let $V$ be a finite dimensional space, $e\in V$, and $\Theta$ be a compact metric space such that for all $\theta\in\Theta$, we are given an order-unit space structure on $V$ of order unit $e$, with state space $\mathscr{S}_\theta$ and norm $\|\cdot\|_\theta$. Assume moreover that $\theta\in\Theta\mapsto \|v\|_\theta$ is continuous for all $v\in V$. Then:
\begin{enumerate}
\item There exists a norm $\|\cdot\|^\ast$ on the dual $V'$ of $V$ and $k \geq 1$ such that:
\begin{equation*}
\|\cdot\|^\ast \leq \|\cdot\|_\theta'\leq k \|\cdot\|^\ast
\end{equation*}
for all $\theta\in\Theta$,
\item If $\varepsilon > 0$ is given, and $\mathscr{P}$ is a set of continuous families of states such that, for some $\omega\in\Theta$, the set $\{ \varphi_\omega : \varphi \in \mathscr{P} \}$ is $\varepsilon$-dense in $\mathscr{S}_\omega$ for $\|\cdot\|^\ast$, then there exists a neighborhood $U$ of $\omega$ in $\Theta$ such that for all $\theta\in U$, the set $\{\varphi_\theta : \varphi \in \mathscr{P} \}$ is $3\varepsilon$-dense in $\mathscr{S}_\theta$ for $\|\cdot\|^\ast$.
\end{enumerate}
\end{theorem}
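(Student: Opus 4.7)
In finite dimensions all norms are equivalent, and the hypothesis gives pointwise continuity of $\theta\mapsto\|v\|_\theta$. Applying the argument of Lemma \ref{cont-lemma} to the trivial family in which every norm coincides with a fixed reference norm $\|\cdot\|_0$ on $V$, one promotes this pointwise continuity to joint continuity of $(v,\theta)\in V\times\Theta\mapsto\|v\|_\theta$. Restricting this continuous function to the compact set $\{v\in V:\|v\|_0=1\}\times\Theta$ yields constants $0<c\leq C<\infty$ with $c\|\cdot\|_0\leq\|\cdot\|_\theta\leq C\|\cdot\|_0$ uniformly in $\theta\in\Theta$, which dualize to $C^{-1}\|\cdot\|_0'\leq\|\cdot\|_\theta'\leq c^{-1}\|\cdot\|_0'$. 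Setting $\|\cdot\|^*=C^{-1}\|\cdot\|_0'$ and $k=C/c\geq 1$ finishes part (1).

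\textbf{Plan for part (2).} The strategy has three parts. First, I would establish the following upper semi-continuity of the state-space fibers: for every $\eta>0$ there exists a neighborhood $U_\eta$ of $\omega$ in $\Theta$ such that for all $\theta\in U_\eta$, every $\psi\in\mathscr{S}_\theta$ lies within $\|\cdot\|^*$-distance $\eta$ of $\mathscr{S}_\omega$. I would prove this by contradiction: assuming $\theta_n\to\omega$ and $\psi_n\in\mathscr{S}_{\theta_n}$ with $\mathrm{dist}(\psi_n,\mathscr{S}_\omega)\geq\eta$, part (1) bounds $(\psi_n)$ uniformly in $\|\cdot\|^*$, so by finite-dimensionality of $V'$ one extracts a subsequence $\psi_{n_j}\to\psi\in V'$. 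Continuity of evaluation at $e$ gives $\psi(e)=1$, and for any $v\in V$ with $\|v\|_\omega<1$ joint continuity of $(v,\theta)\mapsto\|v\|_\theta$ yields $\|v\|_{\theta_{n_j}}\leq 1$ eventually, hence $|\psi_{n_j}(v)|\leq 1$ and in the limit $|\psi(v)|\leq 1$; approximating vectors on the unit sphere by vectors in the open unit ball upgrades this to $\|\psi\|_\omega'\leq 1$, placing $\psi$ in $\mathscr{S}_\omega$ and contradicting $\mathrm{dist}(\psi_{n_j},\mathscr{S}_\omega)\geq\eta$.

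Second, I would exploit compactness of $\mathscr{S}_\omega$ for $\|\cdot\|^*$ to select a finite $\varepsilon$-net $\{\chi_1,\ldots,\chi_n\}\subseteq \mathscr{S}_\omega$, then invoke the $\varepsilon$-density hypothesis to pick $\varphi^{(i)}\in\mathscr{P}$ with $\|\chi_i-\varphi^{(i)}_\omega\|^*\leq\varepsilon$, so that $\{\varphi^{(i)}_\omega\}_{i=1}^n$ is $2\varepsilon$-dense in $\mathscr{S}_\omega$. Because only finitely many continuous families are now involved, a neighborhood $U'$ of $\omega$ exists on which $\|\varphi^{(i)}_\omega-\varphi^{(i)}_\theta\|^*\leq\varepsilon/2$ simultaneously for all $i=1,\ldots,n$. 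Finally, setting $U=U_{\varepsilon/2}\cap U'$, for any $\theta\in U$ and $\psi\in\mathscr{S}_\theta$ I would chain three estimates: choose $\psi'\in\mathscr{S}_\omega$ with $\|\psi-\psi'\|^*\leq\varepsilon/2$, then $i$ with $\|\psi'-\varphi^{(i)}_\omega\|^*\leq 2\varepsilon$, and exploit $\|\varphi^{(i)}_\omega-\varphi^{(i)}_\theta\|^*\leq\varepsilon/2$ to obtain $\|\psi-\varphi^{(i)}_\theta\|^*\leq 3\varepsilon$, as required.

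\textbf{Anticipated obstacle.} The delicate step is the upper semi-continuity of the state spaces: one cannot just pass to the limit in $\|\psi_n\|_{\theta_n}'\leq 1$, and must instead probe the limiting dual norm against vectors $v$ with $\|v\|_\omega<1$, recovering the boundary inequality by approximation. Everything else is essentially compactness bookkeeping, and the decisive move is reducing the (possibly infinite) family $\mathscr{P}$ to a finite subfamily so that uniform continuity of finitely many graphs $\theta\mapsto\varphi^{(i)}_\theta$ may be leveraged; without this reduction there is no hope of a single neighborhood $U$ working for all $\varphi\in\mathscr{P}$ at once.
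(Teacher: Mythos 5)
Your proposal is correct, but there is no in-paper proof to compare it against: the statement is quoted verbatim from Rieffel (Lemma 10.11 of \cite{Rieffel00}), and the paper explicitly defers to Section 10 of that article rather than reproducing the argument. Your plan amounts to a legitimate self-contained proof in the same spirit as Rieffel's development: uniform equivalence of the dual norms (his Lemma 10.1) followed by a stability argument for the state-space fibers. Two points deserve tightening. First, in part (1) you cannot invoke the argument of Lemma \ref{cont-lemma} with an arbitrary reference norm $\|\cdot\|_0$, because that lemma needs a single norm dominating every $\|\cdot\|_\theta$; you must first manufacture one, for instance $\mathsf{m}=\sup_{\theta\in\Theta}\|\cdot\|_\theta$ (finite for each vector by continuity in $\theta$ and compactness of $\Theta$), or a suitable multiple of $\|\cdot\|_0$ obtained by bounding $\|v_i\|_\theta$ over $\Theta$ for a basis $v_1,\ldots,v_n$ of $V$ --- this is exactly the remark the paper makes in the paragraph preceding the theorem. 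Once joint continuity is available, your compactness argument on the unit sphere and the dualization $C^{-1}\|\cdot\|_0'\leq\|\cdot\|_\theta'\leq c^{-1}\|\cdot\|_0'$, with $k=C/c$, are fine. Second, in your upper semicontinuity step you pass from $\psi(e)=1$ and $\|\psi\|_\omega'\leq 1$ to $\psi\in\mathscr{S}_\omega$; this uses the standard order-unit-space fact that a linear functional of dual norm one which takes the value one at the order unit is automatically positive, hence a state --- it should be stated, since it is the only place the order structure enters your argument. With these clarifications, the reduction of $\mathscr{P}$ to a finite subfamily via an $\varepsilon$-net of the compact set $\mathscr{S}_\omega$, together with the chain $\varepsilon/2+2\varepsilon+\varepsilon/2=3\varepsilon$, yields exactly the claimed conclusion, so your route is a valid alternative to simply citing \cite{Rieffel00}.
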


We refer to \cite[Section 10]{Rieffel00} for the development of the theory which leads to the proof of Theorem (\ref{rieffel-cont-field-thm}). Note that our $k$ is $k^{-1}$ in the notations of \cite[Lemma 10.1]{Rieffel00}.


We now establish the fundamental example of this paper:

\begin{theorem}\label{qt-main}
Let $d\in \N\setminus\{0,1\}$ and $\sigma$ a skew-bicharacter of $\Z^d$. Write $\infty^d = (\infty,\ldots,\infty) \in \Nbar^d$. Let $l$ be a continuous length function on $\U^d$. Then:
\begin{equation*}
\lim_{(c,\theta)\rightarrow (\infty^d,\sigma)} \propinquity\left(\left(C^\ast\left(\Z^d_c,\theta\right),\Lip_{l,c,\theta}\right),\left(C^\ast\left(\Z^d,\sigma\right),\Lip_{l,\infty^d,\sigma}\right)\right) = 0 \text{.}
\end{equation*}
\end{theorem}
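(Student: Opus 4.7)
The plan is to construct, for each $(c,\theta) \in \Xi^d$ sufficiently close to $(\infty^d,\sigma)$, a single explicit bridge $\gamma$ from $\A_{c,\theta} = C^\ast(\Z^d_c,\theta)$ to $\A_{\infty^d,\sigma}$ whose length is $O(\varepsilon)$ for an arbitrary preassigned $\varepsilon > 0$. The bridge C*-algebra is $\B^d = \B(\ell^2(\Z^d))$, the unital *-monomorphisms are the representations $\pi_{c,\theta}$ and $\pi_{\infty^d,\sigma}$ of Notation \ref{rep}, and the pivot is the positive finite-rank diagonal operator $\omega = \omega_{N,M}$ of Theorem \ref{qt-commutator-thm}. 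Since $\omega_{N,M}$ equals the identity on the range of the projection $P_N$ onto $\mathrm{span}\{e_n : |n| \leq N\}$, any state of the form $\psi_A(\cdot) = \tr(A\,\cdot)$ with $A$ positive trace-class of trace $1$ satisfying $P_N A P_N = A$ belongs to $\StateSpace_1(\omega)$, so the pivot is admissible and the bridge is well-formed.

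For the \emph{reach}, let $a \in \sa{\A_{\infty^d,\sigma}}$ have $\Lip_{l,\infty^d,\sigma}(a) \leq 1$. Theorem \ref{Fejer-approx-thm} provides a finite set $S \subseteq \Z^d$ containing $0$, a positive character-combination $\phi$, and a neighborhood $U$ of $(\infty^d,\sigma)$ on which the Fejer operator $\alpha_{c,\theta}^\phi$ has range $\mathrm{span}\{U_{c,\theta}^{q_c(p)} : p \in S\}$ with $q_c$ injective on $S$, reduces Lip-norms, and approximates any self-adjoint element within $\varepsilon\Lip$. Write $a' = \alpha_{\infty^d,\sigma}^\phi(a) = \sum_{p \in S}\lambda_p U_{\infty^d,\sigma}^p$ and transfer to $b = \sum_{p \in S}\lambda_p U_{c,\theta}^{q_c(p)} \in \A_{c,\theta}$, whose Lip-norm is at most $1 + o(1)$ by Theorem \ref{lip-norm-cont-thm}. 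The bridge seminorm decomposes as
\begin{equation*}
\pi_{\infty^d,\sigma}(a)\omega - \omega\pi_{c,\theta}(b) = \pi_{\infty^d,\sigma}(a-a')\omega + [\pi_{\infty^d,\sigma}(a'),\omega] + \omega\bigl(\pi_{\infty^d,\sigma}(a') - \pi_{c,\theta}(b)\bigr).
\end{equation*}
The first summand is bounded by $\varepsilon$; after centering $a'$ (harmless for both the Lip-norm and the bridge seminorm, since $\pi(\unit)\omega = \omega = \omega\pi(\unit)$), the coefficients $(\lambda_p)_{p \in S \setminus \{0\}}$ are uniformly controlled via equivalence of Lip-norm and $\|\cdot\|_1$ on the finite-dimensional space $V_S = \mathrm{span}\{\delta_p : p \in S, p \neq 0\}$, uniform on a compact neighborhood by Theorem \ref{lip-norm-cont-thm}; hence Theorem \ref{qt-commutator-thm} bounds the commutator by $C\max_{p \in S}|p|/M$. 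The third summand tends to zero uniformly on the Lip-unit ball as $(c,\theta) \to (\infty^d,\sigma)$ by Corollary \ref{compact-sot-corollary} applied with the compact pivot $\omega$, combined with the joint-continuity mechanism of Lemma \ref{cont-lemma} on the fixed finite-dimensional $V_S$. The symmetric argument starting from an element of $\A_{c,\theta}$ and transferring back is identical.

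For the \emph{height}, Lemma \ref{F_N-lemma} furnishes $N$ and a finite family $\mathfrak{F}_N$ of positive trace-one trace-class operators $A$ with $P_N A P_N = A$ such that $\{\psi_A \circ \pi_{\infty^d,\sigma} : A \in \mathfrak{F}_N\}$ is $\varepsilon$-dense in $(\StateSpace(\A_{\infty^d,\sigma}),\Kantorovich{\Lip_{l,\infty^d,\sigma}})$, and each $\psi_A$ lies in $\StateSpace_1(\omega_{N,M})$. To propagate density to $\A_{c,\theta}$, I observe that for any trace-class $A$ and any $f \in \ell^1(\Z^d)$ supported on $S$, the scalar $(c,\theta) \mapsto \tr\bigl(A\,\pi_{c,\theta}(\upsilon_c(f))\bigr)$ is continuous: Theorem \ref{SOT-cont-thm} gives strong-operator convergence of $\pi_{c,\theta}(\upsilon_c(f))$, and pairing trace-class with a uniformly bounded SOT-convergent net yields scalar convergence by dominated convergence in the Schmidt expansion of $A$. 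Thus $(c,\theta) \mapsto \psi_A \circ \pi_{c,\theta}$ is a continuous field of states on the Fejer-truncated subspace, and Rieffel's continuous-field-of-states transfer (Theorem \ref{rieffel-cont-field-thm}) combined with uniform Fejer truncation of the test elements yields $O(\varepsilon)$-density in $\StateSpace(\A_{c,\theta})$ on a smaller neighborhood.

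The principal technical obstacle will be the careful coordination of four scales: the Fejer support $S$ must be fixed first from $\varepsilon$; $N$ must then be chosen large enough for height density; $M$ must be chosen large compared to $\max_{p \in S}|p|$ for the commutator estimate; and finally the neighborhood of $(\infty^d,\sigma)$ must be shrunk to simultaneously ensure $N + M < \wedge c$ (so Theorem \ref{qt-commutator-thm} applies with pivot $\omega_{N,M}$ in $\A_{c,\theta}$), preserve the Lip-norm/$\|\cdot\|_1$ equivalence on $V_S$, and deliver the uniform SOT-to-norm convergence $\omega\bigl(\pi_{c,\theta}(\upsilon_c(f)) - \pi_{\infty^d,\sigma}(\upsilon_{\infty^d}(f))\bigr) \to 0$ on the relevant compact set of $f$'s. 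Once aligned, both the reach and the height of $\gamma$ are $O(\varepsilon)$, so by Definition \ref{propinquity-def}, $\propinquity\bigl((\A_{c,\theta},\Lip_{l,c,\theta}),(\A_{\infty^d,\sigma},\Lip_{l,\infty^d,\sigma})\bigr) = O(\varepsilon)$, proving the theorem.
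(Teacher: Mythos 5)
Your proposal is correct and takes essentially the same route as the paper's proof: the same single bridge $\left(\B^d,\omega_{N,M},\pi_{\infty^d,\sigma},\pi_{c,\theta}\right)$, with the height controlled through Lemma (\ref{F_N-lemma}) and Rieffel's continuous-field-of-states transfer (Theorem (\ref{rieffel-cont-field-thm})), and the reach through Fej\'er truncation (Theorem (\ref{Fejer-approx-thm})), the commutator estimate of Theorem (\ref{qt-commutator-thm}), and strong-operator-to-norm convergence against the compact pivot (Corollary (\ref{compact-sot-corollary})), with the same coordination of $S$, $N$, $M$ and the shrinking neighborhood. The only cosmetic difference is in the reach: the paper places the transferred element exactly in the Lip-unit ball by rescaling $a$ by $\Lip_{l,\infty^d,\sigma}(a)/\Lip_{l,c,\theta}(a)$, whereas you accept a $1+o(1)$ Lip-bound and would renormalize using the centering and the uniform Lip-norm/$\|\cdot\|_1$ equivalence you already invoke.
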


\begin{proof}

Our proof consists of four steps, which we separate in claims, followed by their own proofs. The strategy consists of finding, for any $\varepsilon > 0$, a neighborhood $\Omega$ of $(\infty^d,\sigma)$ in $\Xi^d$ such that, for all $(c,\theta)\in \Omega$, there exists a bridge $\gamma^\varepsilon_{c,\theta}$ in $\bridgeset{\A_{\infty^d,\sigma}}{\A_{c,\theta}}$ with height less or equal than $\varepsilon$. Then, we check that, up to shrinking $\Omega$, these bridge have reach less than $\varepsilon$ as well. We start by establishing the framework for our proof.

\bigskip

For the rest of this proof, we fix $\varepsilon > 0$. 

\bigskip

By Theorem (\ref{Fejer-approx-thm}), there exists $\phi : \U^d \rightarrow \R$ and an open neighborhood $\Omega_0\subseteq\Xi^d$ of $({\infty^d,\sigma})$ in $\Xi^d$ such that:
\begin{enumerate}
\item For all $(c,\theta) \in \Omega_0$ and for all $a\in \sa{C^\ast(\Z^d_c,\theta)}$ we have:
\begin{equation*}
\|a-\alpha_{c,\theta}^\phi (a)\|_{c,\theta} \leq \frac{1}{4}\varepsilon \Lip_{l,c,\theta}(a)\text{ and }\Lip_{l,c,\theta}(\alpha_{c,\theta}^\phi(a))\leq\Lip_{l,c,\theta}(a)\text{,}
\end{equation*}
\item There exists a finite subset $S$ of $\Z^d$ containing $0$ such that, for all $(c,\theta) \in \Omega_0$, the restriction of the canonical surjection $q_c: \Z^d \rightarrow \Z^d_c$ is injective on $S$ while the range of $\alpha_{c,\theta}^\phi$ is the span of $\{ U_{c,\theta}^p : p \in q_c(S) \}$, where the unitaries $U_{c,\theta}^p$ ($p \in \Z^d_c$) are defined by Equation (\ref{U-eq}) in Theorem (\ref{representation-thm}).
\end{enumerate}

Since $\Xi^d$ is compact, we assume that $\Omega_0$ is chosen to be a compact neighborhood of $({\infty^d},\sigma)$, shrinking it if necessary.

We define the following two vector spaces, which we will use repeatedly in the rest of this proof:
\begin{equation*}
V = \left\{ f \in \ell^1(\Z^d) : \forall n \not\in S \quad f(n) = 0 \right\}\text{,}
\end{equation*}
and
\begin{equation*}
E = \{ f \in V : f(0) = 0 \} \text{.}
\end{equation*}

We also denote the unit ball of $(E,\|\cdot\|_1)$ by $\Sigma$:
\begin{equation}\label{sigma-def-eq}
\Sigma = \left\{ f \in E : \|f\|_1 = 1 \right\}\text{.}
\end{equation}

By construction, $V$ is a finite dimensional subspace of $\A_{\infty^d,\sigma}$. Moreover, since $q_c$ is injective on $S$, the map $\upsilon_c$ is injective on $V$ by Lemma (\ref{upsilon-lemma}); thus it defines a linear isomorphism from $V$ onto:
\begin{equation*}
\upsilon_c(V) = \left\{ f \in \ell^1\left(\Z^d_c\right) : \forall n \not\in S \quad f(q_c(n)) = 0 \right\}\text{.}
\end{equation*}
For all $(c,\theta)\in \Omega_0$, the map $\upsilon_c$ thus allows us to identify $V$ (and $E$) with a subset of $\A_{c,\theta}$, and to drop the notation $\upsilon_c$ with no confusion. We shall thus do so in the rest of this proof.

\bigskip

\emph{The space $V$, identified with a subspace of $\A_{c,\theta}$ for all $(c,\theta)\in\Omega_0$, carries a structure of order-unit space.}

For all $(c,\theta)\in \Omega_0$, the space $V$ is identified with a subspace of $\sa{\A_{c,\theta}}$ containing the unit, and thus $V$ is endowed with an order-unit space structure associated with the norm $\|\cdot\|_{c,\theta}$; let $\StateSpace(V|c,\theta)$ be the state space of $V$ with this order-unit space structure. Note that the unit is always $\delta_0$ for all $(c,\theta)\in \Omega_0$, and thus we will omit it from our notations.

By Lemma (\ref{cont-lemma}), for any $v\in V$, the function $(c,\theta)\in \Omega_0 \mapsto \|v\|_{c,\theta}$ is continuous (using $m = \|\cdot\|_1$, in the notations of Lemma (\ref{cont-lemma})). 

\bigskip
\emph{We now check that $V \subseteq \dom{\Lip_{l,c,\theta}}$ for all $(c,\theta)\in\Omega_0$.}

For all $(c,\theta)\in\Omega_0$, we note that the dual action $\alpha_{c,\theta}$ of $\U^d_c$ on $\A_{c,\theta}$ given by Theorem-Definition (\ref{dual-action-thmdef}) leaves $V$ invariant. By \cite[Proposition 2.2]{Rieffel98a}, the vector space $V_\Lip$ of elements $v$ in $V$ with $\Lip_{l,c,\theta}(v)<\infty$ is dense in $V$, and since $V_\Lip$ is finite dimensional (as $V$ is), it is closed in $V$. We thus conclude that $V\subseteq\dom{\Lip_{l,c,\theta}}$ for all $(c,\theta) \in \Omega_0$.

\emph{Last, we introduce a quantity we shall use several times in the following claims for our proof.} By Corollary (\ref{norm-cont-corollary}) and Theorem (\ref{lip-norm-cont-thm}), as $E$ is a finite dimensional subspace of $\ell^1(\Z^d)$, the functions $\mathrm{lf}: (f,c,\theta)\in \Sigma\times \Omega \mapsto\Lip_{l,c,\theta}(f)$ is jointly continuous, where $\Sigma$ is defined by Equation (\ref{sigma-def-eq}). Since $\Sigma\times \Omega$ is compact, the function $\mathrm{lf}$ has a minimum on $\Sigma\times \Omega$. We set:
\begin{equation}\label{qt-y-eq}
y = \min \left\{ \Lip_{l,c,\theta}(f) : (f,c,\theta)\in\Sigma\times\Omega_0 \right\} \text{.}
\end{equation}
By assumptions on Lip-norms and since $\Sigma$ contains no nonzero scalar multiple of the unit of any $\A_{c,\theta}$ for any $(c,\theta)\in\Xi^d$, the real number $y$ is strictly positive.

We can now start our series of claims, based on the framework established above.

\begin{claim}\label{continuous-fields-of-states-claim}
We shall use the notation of Lemma (\ref{F_N-lemma}): for any $A\in\mathfrak{L}_1^+$, the state $\psi_A$ on $\B^d$ is defined as:
\begin{equation*}\psi_A : T\in\B^d \longmapsto \tr(AT)\text{.}\end{equation*}
There exists a compact neighborhood $\Omega$ of $(\infty^d,\sigma)$ in $\Xi^d$, $N\in\N$ and a finite subset $\mathfrak{F}_N$ of $\mathfrak{L}_1^+$ such that:
\begin{enumerate}
\item $\forall (c,\theta) \in \Omega\quad \Haus{\Kantorovich{\Lip_{l,c,\theta}}}\left(\StateSpace(\A_{c,\theta}),\left\{ \psi_A\circ\pi_{c,\theta} : A \in \mathfrak{F}_N \right\} \right) < \varepsilon \text{.}$
\item For all $B \in \mathfrak{F}_N$, we have $P_N B = B P_N = P_N B P_N = B$, where $P_N$ is the projection of $\ell^2\left(\Z^d\right)$ on the span of $\{e_n : |n|\leq N\}$, with $(e_n)_{n\in\Z^d}$ the canonical Hilbert basis of $\ell^2\left(\Z^d\right)$.
\end{enumerate}
\end{claim}

\newcommand{\lip}{{\mathsf{l}}}
Let $\lip_{c,\theta}$ be the restriction of $\Lip_{l,c,\theta}$ to $E$ for all $(c,\theta)\in\Omega_0$. Since $\Lip_{l,c,\theta}(v) = 0$ implies that $v\in\R\delta_0$ for any $v\in V$ by definition of Lip-norms, and since we saw that $V\subseteq\dom{\Lip_{l,c,\theta}}$, we conclude that $\lip_{c,\theta}$ is a norm on $E$. Let $\lip^\ast_{c,\theta}$ be the dual norm of $\lip_{c,\theta}$, defined on the dual $E^\ast$ of $E$, i.e.:
\begin{equation*}
\forall\mu\in E^\ast\quad\lip^\ast_{c,\theta}(\mu) = \sup \{ |\mu(v)| : v\in E\text{ and }\Lip_{l,c,\theta}(v) \leq 1\}\text{.}
\end{equation*}
As shown in \cite[Lemma 10.1]{Rieffel00} and as restated in Theorem (\ref{rieffel-cont-field-thm}), there exists a norm $\|\cdot\|^\ast$ on the dual $E^\ast$ of $E$ and a constant $k \geq 1$ such that, for all $(c,\theta) \in \Omega_0$, we have:
\begin{equation}\label{norm-equiv-eq}
\|\cdot\|^\ast\leq \lip_{c,\theta}^\ast(\cdot)\leq k \|\cdot\|^\ast\text{.}
\end{equation}
One also checks that for all $v \in E$, the map $(c,\theta)\in\Omega_0\mapsto \lip^\ast_{c,\theta}(v)$ is continuous by Lemma (\ref{cont-lemma}), since $(c,\theta,v)\in\Omega\times E\mapsto\lip_{c,\theta}(v)$ is jointly continuous.

It is important to take note that by Definition (\ref{quantum-compact-metric-space-def}), we have:
\begin{equation*}
\Kantorovich{\Lip_{l,c,\theta}}(\mu,\nu) \geq \lip^\ast_{c,\theta}(\mu-\nu)
\end{equation*}
for any $(c,\theta)\in \Omega_0$ and $\mu,\nu\in\StateSpace(\A_{c,\theta})$, where we denote the restrictions of $\mu$ and $\nu$ by the same letters.
\bigskip

\emph{We now can use Lemma (\ref{F_N-lemma}) and \cite[Lemma 10.11]{Rieffel00} to construct our continuous field of states.}

Let $k\geq 1$ be given by Assertion (\ref{norm-equiv-eq}). By Lemma (\ref{F_N-lemma}), there exists $N\in\N$ and a finite subset $\mathfrak{F}_N$ of $\mathfrak{L}_1^+$ such that:
\begin{equation*}
\Haus{\Kantorovich{\Lip_{l,\infty^d,\sigma}}}(\StateSpace(\A_{\infty^d,\sigma}),\{ \psi_A\circ\pi_{\infty^d,\sigma} : A\in \mathfrak{F}_N \}) \leq \frac{\varepsilon}{6k}\text{,}
\end{equation*}
and such that $P_N A P_N = A P_N = P_N A = A$ for all $A\in\mathfrak{F}_N$ with $P_N$ be the orthogonal projection on $\operatorname{span}\{e_j : |j| \leq N \}$.

In particular, for any $\varphi \in\StateSpace(\A_{\infty^d,\sigma})$, there exists $A\in \mathfrak{F}_N$ such that:
\begin{equation}\label{star-norm-eq}
\begin{split}
\| \varphi - \psi_\A\circ\pi_{\infty^d,\sigma} \|^\ast &\leq \lip^\ast_{\infty^d,\sigma}(\varphi - \psi_\A\circ\pi_{\infty^d,\sigma}) \\
&\leq \Kantorovich{\Lip_{l,\infty^d,\sigma}}(\varphi,\psi\circ\pi_{\infty^d,\sigma}) \leq \frac{\varepsilon}{6k}\text{,}
\end{split}
\end{equation}
where we used the same names for $\varphi$ and $\psi_\A\circ\pi_{k,\sigma}$ and their restrictions to $E$.

\bigskip

\emph{We now use \cite{Rieffel00} to build our continuous fields of states.}

For each $A\in{\mathfrak{L}_1}^{1+}$ and $v\in V$, the function:
\begin{equation*}
(c,\theta) \in \Omega_0 \longmapsto \psi_A\circ\pi_{c,\theta}(v)
\end{equation*}
is continuous, by Lemma (\ref{trace-class-SOT-lemma}) and Theorem (\ref{SOT-cont-thm}). 

By \cite[Lemma 10.11]{Rieffel00}, there exists a compact neighborhood $\Omega \subseteq \Omega_0$ of $(\infty^d,\sigma)$ such that, for all $(c,\sigma) \in \Omega$, and for all $\varphi \in\StateSpace(V|c,\theta)$, there exists $A\in\mathfrak{F}_N$ such that:
\begin{equation*}
\|\varphi - \psi_A\circ\pi_{c,\theta}\|^\ast \leq \frac{3\varepsilon}{6 k} = \frac{\varepsilon}{2k}\text{,}
\end{equation*}
where, by a slight abuse of notations, we identified the states $\psi_A\circ\pi_{c,\theta}$ and $\varphi$ with their restrictions to $E$. 

\bigskip

\emph{We can now conclude our claim.}

Let $(c,\theta)\in\Omega$. Since $\lip_{c,\theta}^\ast(\cdot) \leq k \|\cdot\|^\ast$ on $E^\ast$, and since all states map the unit to $1$, we conclude that, for all $\varphi\in\StateSpace(V|c,\theta)$, there exists $A\in\mathfrak{F}_N$ such that:
\begin{equation}\label{qt-height-eq-1}
\sup \{ | \varphi(a) - \psi_A\circ\pi_{c,\theta}(a) | : a \in V \text{ and }\Lip_{l,c,\theta}(a) \leq 1\} \leq k \frac{\varepsilon}{2k} = \frac{\varepsilon}{2}\text{.}
\end{equation}
The expression in the left hand-side of Inequality (\ref{qt-height-eq-1}) is the {\mongekant} on $\StateSpace(V|c,\theta)$ associated with the restriction of $\Lip_{l,c,\theta}$ to $V$; we shall not need to worry about introducing a notation for this.

\bigskip
Let $\varphi \in \StateSpace(\A_{c,\theta})$. There exists $A\in \mathfrak{F}_N$ such that Inequality (\ref{qt-height-eq-1}) holds. Now, for any $a\in \sa{\A_{c,\theta}}$ with $\Lip_{l,c,\theta}(a) \leq 1$, we have:
\begin{equation*}
\begin{split}
|\varphi(a) - \psi_A\circ\pi_{c,\theta}(a)| &\leq | \varphi(a) - \varphi(\alpha_{c,\theta}^\phi(a))| + |\varphi\circ\alpha_{c,\theta}^\phi(a) - \psi_A\circ\pi_{c,\theta}(\alpha_{c,\theta}^{\phi}(a))|\\
&\quad +| \psi_A\circ\pi_{c,\theta}(\alpha_{c,\theta}^{\phi}(a)) -  \psi_A\circ\pi_{c,\theta}(a)|\\
&\leq \frac{\varepsilon}{4} + \frac{\varepsilon}{2} + \frac{\varepsilon}{4}\text{,}
\end{split}
\end{equation*} 
since $\Lip_{l,c,\theta}(\alpha_{c,\theta}^\phi(a)) \leq \Lip_{l,c,\theta}(a)$ and $\alpha_{c,\theta}^\phi(a) \in V$.

\bigskip
So we conclude that for all $(c,\theta)\in \Omega_1$, we have:
\begin{equation}\label{qt-height-eq0}
\Haus{\Kantorovich{\Lip_{l,c,\theta}}}(\StateSpace(\A_{c,\theta}),\{ \psi_A\circ\pi_{c,\theta} : A\in \mathfrak{F}_N \}) \leq \varepsilon\text{,}
\end{equation}
as desired.

\bigskip

\begin{claim}\label{bridge-height-claim}
Let:
\begin{equation}\label{main-K-eq}
K = \max\{ |m| : m \in S \}\text{,}
\end{equation}
where $S$ is the support of the Fourier transform of $\phi$ on $\U$ (which is finite by assumption). Let $\Omega$ be the compact neighborhood of $(\infty^d,\sigma)$ given by Claim (\ref{continuous-fields-of-states-claim}).

There exist $N,M\in\N$ such that, for all $(c,\theta) \in \Omega$, the bridge:
\begin{equation*}
\gamma^\varepsilon_{c,\theta} = \left( \B\left(\ell^2\left(\Z^d\right)\right), \omega_{N,M}, \pi_{\infty^d,\sigma},\pi_{c,\theta} \right)
\end{equation*}
in $\bridgeset{\A_{\infty^d,\sigma}}{\A_{c,\theta}}$, where $\omega_{N,M}$ is given in Theorem (\ref{qt-commutator-thm}), has height less or equal than $\varepsilon$, and moreover:
\begin{equation*}
\frac{K}{M} \leq \frac{\varepsilon y }{4} \text{.}
\end{equation*}
\end{claim}

Let $M \in \N$ be chosen so that $\frac{K}{M} < \frac{\varepsilon y}{4}$, with $y$ defined by Equation (\ref{qt-y-eq}) and $K$ defined by Equation (\ref{main-K-eq}). 

 We define, using the notations of Theorem (\ref{qt-commutator-thm}) and Notation (\ref{weight-notation}):
\begin{equation*}
\omega_{N,M} = \Diag{w_{N,M} (n)}{n \in \Z^d }\text{,}
\end{equation*}
where $N$ is given by Claim (\ref{continuous-fields-of-states-claim}). In particular, using the notations and conclusions of Claim (\ref{continuous-fields-of-states-claim}), $P_N \omega_{N,M} = \omega_{N,M}P_N = P_N$ and $\omega_{N,M}$ is a positive trace class operator in $\B^d$.

Therefore, by construction:
\begin{equation*}
\psi_A((\unit_{\B^d} - \omega_{N,M})^\ast(\unit_{\B^d} - \omega_{N,M})) = \psi_A((\unit_{\B^d}-\omega_{N,M})(\unit_{\B^d}-\omega_{N,M})^\ast) = 0
\end{equation*}
for all $A\in\mathfrak{F}_N$. Hence:
\begin{equation*}
\left\{ \psi_A :  A\in\mathfrak{F}_N\right\} \subseteq \mathscr{S}_1(\omega_{N,M})\text{.}
\end{equation*} 
Hence, by Inequality (\ref{qt-height-eq0}), we have for all $(c,\theta)\in\Omega$:
\begin{equation}\label{qt-height-eq0b}
\Haus{\Kantorovich{\Lip_{l,c,\theta}}}\left(\StateSpace(\A_{c,\theta}),\left\{ \psi\circ\pi_{c,\theta} : \psi\in\mathscr{S}_1\left(\omega_{N,M}\right)\right\}\right) \leq \varepsilon\text{.}
\end{equation}
Thus, by Inequality (\ref{qt-height-eq0b}) and Definition (\ref{height-def}), we have:
\begin{equation}\label{qt-height-eq1}
\bridgeheight{\gamma}{\Lip_{l,\infty^d,\sigma},\Lip_{l,c,\theta}} \leq \varepsilon
\end{equation}
for all $(c,\theta)\in\Omega$.

\bigskip

\begin{claim}\label{bridge-length-claim}
There exists a neighborhood $\Omega'$ of $(\infty^d,\sigma)$ in $\Xi^d$ and $N,M \in \N$ such that, for all $(c,\theta)\in\Omega'$, the length of the bridge:
\begin{equation*}
\left( \B\left(\ell^2\left(\Z^d\right)\right), \omega_{N,M}, \pi_{\infty^d,\sigma},\pi_{c,\theta} \right)
\end{equation*}
is less than $\varepsilon$.
\end{claim}

Using Notation (\ref{wedge-notation}), let:
\begin{equation*}
\Omega_> = \Omega \cap \left\{ z \in \Nbar^d :  N+M \leq \wedge z \right\}\times\mathds{B}_{\infty^d}
\end{equation*}
which is a compact neighborhood of $(\infty^d,\sigma)$ by definition. 

\bigskip

Let $N,M$ be given by Claim (\ref{bridge-height-claim}). By Theorem (\ref{lip-norm-cont-thm}), together with Lemma (\ref{cont-lemma}), choosing $\|\cdot\|_1$ for $\mathsf{m}$, we see that the map:
\begin{equation*}
\mathrm{df} : (f,c,\theta)\in \Sigma\times \Omega_> \longmapsto |\Lip_{c,\theta}(f) - \Lip_{\infty^d,\sigma}(f)|
\end{equation*}
is jointly continuous. Similarly, by Corollary (\ref{compact-sot-corollary}), and using the same method as used in the proof of Lemma (\ref{cont-lemma}), the map:
\begin{equation*}
\mathrm{cf}: (f,c,\theta)\in V \times \Omega_> \longmapsto \|(\pi_{\infty^d,\sigma}(\upsilon_k(f)) - \pi_{c,\theta}(\upsilon_c(f)))\omega_{N,M}\|_{\B^d}
\end{equation*}
is jointly continuous. Thus $\mathrm{cf}$ and $\mathrm{df}$ are uniformly continuous on the compact $\Sigma\times \Omega_>$. We can therefore find a compact neighborhood $\Omega'$ of $(\infty^d,\sigma)$ in $\Omega_>$ such that, for all $(c,\theta) \in \Omega'$, and for all $f \in \Sigma$, we have:
\begin{equation*}
\left\{
\begin{aligned}
&\|(\pi_{\infty^d,\sigma}(\upsilon_k(f)) - \pi_{c,\theta}(\upsilon_c(f)))\omega_{N,M}\|_{\B^d} < \frac{1}{4}\varepsilon y\text{,} \\
&|\Lip_{\infty^d,\sigma}(f) - \Lip_{c,\theta}(f)| < \frac{1}{4}\varepsilon y^2 \text{.}
\end{aligned}
\right.
\end{equation*}

\bigskip

Let $(c,\theta) \in \Omega'$. The bridge we wish to consider is given by:
\begin{equation*}
\gamma = \left( \B(\ell^2(\Z^d)), \omega_{N,M}, \pi_{\infty^d,\sigma}, \pi_{c,\theta}  \right) \text{.}
\end{equation*}
By Claim (\ref{bridge-height-claim}), the height of $\gamma$ is no more than $\varepsilon$.
\bigskip

It remains to compute the reach of the bridge $\gamma$. 

\bigskip

Let $a\in \Sigma \subseteq \sa{\A_{\infty^d,\sigma}}$, and note that $\Lip_{l,c,\theta}(a) > 0$ by definition of $\Sigma$. Set:
\begin{equation*}
b = \frac{\Lip_{l,\infty^d,\sigma}(a)}{\Lip_{l,c,\theta}(a)} a \text{.}
\end{equation*}
By construction, $\Lip_{l,c,\theta}(b) = \Lip_{l,\infty^d,\sigma}(a)$. 

Now, using Theorem (\ref{qt-commutator-thm}), since for all $(c,\theta)\in\Omega'$ we have $N+M\leq \wedge c$, we have for all $d = \sum_{n\in S} \lambda_n \delta_n \in\Sigma$, with $(\lambda_n)_{n\in S}$ a family of scalar:
\begin{equation*}
\begin{split}
\|[\omega_{N,M},\pi_{c,\theta}(d)]\|_{\B^d} &\leq \sum_{m\in S} |\lambda_m| \|[\omega_{N,M},\pi_{c,\theta}(\delta_{m})\|_{\B^d} \\
&\leq \sum_{m\in S} |\lambda_m| \frac{K}{M} \leq \frac{K}{M} \leq\frac{\varepsilon y}{4}\text{,}
\end{split}
\end{equation*}
since, again by definition of $\Sigma$ as the unit sphere in $E$ for $\|\cdot\|_1$, we have:
\begin{equation*}
\sum_{m\in S}|\lambda_m| = 1\text{.}
\end{equation*}

Using $\|\omega_{N,M}\|_{\B^d} = 1$, we thus have:
\begin{multline*}
\| \pi_{\infty^d,\sigma}(a)\omega_{N,M} - \omega_{N,M}\pi_{c,\theta}(b) \|_{\B^d} \\
\begin{aligned}
&\leq \| [\omega_{N,M},\pi_{c,\theta}(b)] \|_{\B^d} + \| (\pi_{\infty^d,\sigma}(a) - \pi_{c,\theta}(b)) \omega_{N,M} \|_{\B^d}\\
&\leq \frac{\varepsilon y}{4} +  \|(\pi_{\infty^d,\sigma}(a) - \pi_{c,\theta}(a))\omega_{N,M}\|_{\B^d} + \| (\pi_{c,\theta}(a) - \pi_{c,\theta}(b)) \|_{\B^d}\\
&\leq \frac{\varepsilon}{4} y + \frac{\varepsilon}{4} y + \left|1-\frac{\Lip_{l,\infty^d,\sigma}(a)}{\Lip_{l,c,\theta}(a)}\right|\\
&\leq \frac{1}{2}\varepsilon\Lip_{l,\infty^d,\sigma}(a) + \frac{|\Lip_{l,\infty^d,\sigma}(a)-\Lip_{l,c,\theta}(a)|}{\Lip_{l,c,\theta}(a)}\\
&\leq \frac{\varepsilon}{2}\Lip_{l,\infty^d,\sigma}(a) + \frac{\varepsilon y}{4}\leq \frac{3\varepsilon}{4}\Lip_{l,\infty^d,\sigma}(a) \text{.}
\end{aligned}
\end{multline*}

Now, let $a = a' + t\unit_{\A_{\infty^d,\sigma}} \in V$ with $a' \in \Sigma$. A quick computation shows that, if $b' = \frac{\Lip_{l,\infty^d,\sigma}(a')}{\Lip_{l,c,\theta}(a')}a'$  and $b = b' + t\unit_{\A_{c,\theta}}$, then:
\begin{equation}\label{qt-final-construction-eq1}
\Lip_{l,c,\theta}(b) \leq \Lip_{l,\infty^d,\sigma}(a) \text{ and }\|\pi_{\infty^d,\sigma}(a) \omega_{N,M} - \omega_{N,M}\pi_{c,\theta}(b)\|_{\B^d} \leq \frac{3\varepsilon}{4}\Lip_{l,\infty^d,\sigma}(a) \text{.}
\end{equation}

Now, let $d\in V$ and let $a\in E$ with $d-a\in\R\delta_0$. Let $r = \|a\|_1$. If $r = 0$, then $d = t\unit_{\A_{\infty^d,\sigma}}$ for some $t\in\R$, and we check easily that if $e = t\unit_{\A_{c,\theta}}$ then we have $\Lip_{l,\infty^d,\sigma}(d) = \Lip_{l,c,\theta}(e) (=0)$ and $\|\pi_{\infty^d,\sigma}(d)\omega_{N,M} - \omega_{N,M}\pi_{c,\theta}(e)\|_{\B^d} = 0$.

Otherwise, pick $b \in \sa{\A_{c,\theta}}$ given by Equation (\ref{qt-final-construction-eq1}) for $a = r^{-1}d$, and set $e = rc$. Then again, by homogeneity:
\begin{equation}\label{qt-final-construction-eq2}
\Lip_{l,c,\theta}(e) \leq \Lip_{l,\infty^d,\sigma}(d) \text{ and }\|\pi_{\infty^d,\sigma}(d) \omega_{N,M} - \omega_{N,M}\pi_{c,\theta}(e)\|_{\B^d} \leq \frac{3\varepsilon}{4}\Lip_{l,\infty^d,\sigma}(d) \text{.}
\end{equation}

Last, let $a\in \A_{\infty^d,\sigma}$. let $a' = \alpha_{\infty^d,\sigma}^\phi(a)$. Since $a' \in V$, there exists, by Equation (\ref{qt-final-construction-eq2}), an element $b \in \sa{\A_{c,\theta}}$ such that:
\begin{equation}\label{qt-final-construction-eq3}
\begin{cases}
\Lip_{l,c,\theta}(b)\leq \Lip_{l,\infty^d,\sigma}(a')\leq \Lip_{l,\infty^d,\sigma}(a)\text{ and }\\
\|\pi_{\infty^d,\sigma}(a')\omega_{N,M} - \omega_{N,M}\pi_{c,\theta}(b)\|_{\B^d} \leq \frac{3\varepsilon}{4}\Lip_{l,\infty^d,\sigma}(a')\text{.}
\end{cases}
\end{equation}

 Then we have (again, using $\|\omega_{N,M}\|_{\B^d} = 1$):
\begin{multline*}
\|\pi_{\infty^d,\sigma}(a) \omega_{N,M} - \omega_{N,M} \pi_{c,\theta}(b)\|_{\B^d}\\
\begin{aligned}
&\leq \|\left(\pi_{\infty^d,\sigma}(a) - \pi_{\infty^d,\sigma}(a')\right)\omega_{N,M}\|_{\B^d} + \|\pi_{\infty^d,\sigma}(a')\omega_{N,M} - \omega_{N,M} \pi_{c,\theta}(b)\|_{\B^d} \\
&\leq \|\left(\pi_{\infty^d,\sigma}(a) - \pi_{\infty^d,\sigma}(a')\right)\|_{\B^d} + \varepsilon\Lip_{l,\infty^d,\sigma}(a)\text{ as $\|\omega
_{N,M}\|_{\B^d} = 1$,}\\
&\leq \frac{\varepsilon}{4}\Lip_{\infty^d,\sigma}(a) + \frac{3\varepsilon}{4} \Lip_{k,\sigma}(a)\leq \varepsilon\Lip_{k,\sigma}(a) \text{.}
\end{aligned}
\end{multline*}

\bigskip

In conclusion, for all $a\in\dom{\Lip_{l,\infty^d,\sigma}}$, there exists $b\in \sa{\A_{c,\theta}}$ such that $\Lip_{l,c,\theta}(b) \leq \Lip_{l,\infty^d,\sigma}(a)$ and:
\begin{equation*}
\|\pi_{\infty^d,\sigma}(a)\omega_{N,M} - \omega_{N,M}\pi_{c,\theta}(b)\|_{\B^d} \leq \varepsilon \Lip_{l,\infty^d,\sigma}(a)\text{.}
\end{equation*}

The above reasoning applies equally well with the roles of $(\infty^d,\sigma)$ and $(c,\theta)$ reversed. This proves that:
\begin{equation*}
\bridgereach{\gamma}{\Lip_{l,\infty^d,\sigma},\Lip_{l,c,\theta}} \leq \varepsilon\text{.}
\end{equation*}

Together with Equation (\ref{qt-height-eq1}), we have thus, by Definition (\ref{length-def}), that:
\begin{equation*}
\bridgelength{\gamma}{\Lip_{l,\infty^d,\sigma},\Lip_{l,c,\theta}} \leq \varepsilon\text{.}
\end{equation*}

\begin{claim}
There exists a neighborhood $\Omega'$ of $(\infty^d,\sigma)$ in $\Xi^d$ such that for all $(c,\theta) \in \Omega'$, we have:
\begin{equation*}
\propinquity((\A_{\infty^d,\sigma},\Lip_{l,\infty^d,\sigma}),(\A_{c,\theta},\Lip_{l,c,\theta})) \leq \varepsilon \text{.}
\end{equation*}
\end{claim}

Let $\Omega'$,$N,M\in\N$ be given by Claim (\ref{bridge-length-claim}). Let $(c,\theta)\in\Omega'$. Set:
\begin{equation*}
\gamma = \left(\B\left(\ell^2(\Z^d)\right),\omega_{N,M},\pi_{\infty^d,\sigma},\pi_{c,\theta}\right)
\end{equation*} 
If $\Gamma = (\A_{\infty^d,\sigma},\Lip_{k,\sigma},\gamma,\A_{c,\theta},\Lip_{l,c,\theta})$ is the canonical trek associated with $\gamma$, we have $\treklength{\Gamma} = \bridgelength{\gamma}{\Lip_{l,\infty^d,\sigma},\Lip_{l,c,\theta}} \leq \varepsilon$, and thus by Definition (\ref{propinquity-def}) we have:
\begin{equation*}
\propinquity((\A_{\infty^d,\sigma},\Lip_{l,\infty^d,\sigma}),(\A_{c,\theta},\Lip_{l,c,\theta})) \leq \varepsilon \text{,}
\end{equation*}
as desired.
\end{proof}

\begin{remark}
As the construction of Theorem (\ref{qt-main}) involves a trek with a single bridge, Theorem (\ref{qt-main}) also holds if we replace the quantum propinquity with Rieffel's proximity \cite{Rieffel10c} or with the quantum propinquity specialized to compact C*-metric spaces (or any class of {\Lqcms s} which contain the quantum tori and the fuzzy tori). Thus in particular:
\begin{equation*}
\lim_{(c,\theta)\rightarrow (\infty^d,\sigma)} \propinquity_{\mathcal{C}^\ast}\left(\left(C^\ast\left(\Z^d_c,\theta\right),\Lip_{l,c,\theta}\right),\left(C^\ast\left(\Z^d,\sigma\right),\Lip_{l,\infty^d,\sigma}\right)\right) = 0 \text{.}
\end{equation*}
\end{remark}

Thus, the quantum Gro\-mov-Haus\-dorff propinquity allows one to talk about convergence of fuzzy tori to quantum tori in a stronger sense that in \cite{Latremoliere05}; in particular the convergence preserves the C*-algebra structure, per \cite[Theorem 5.13]{Latremoliere13}.

We conclude by a remark about continuity for fuzzy tori for the quantum propinquity. The proof of Theorem (\ref{qt-main}) can be adjusted to show that:
\begin{equation*}
\lim_{ (c,\theta)\rightarrow(k,\sigma) } \propinquity\left(\left(C^\ast\left(Z^d_c,\theta\right),\Lip_{l,c,\theta}\right),\left(C^\ast\left(\Z^d_k,\sigma\right),\Lip_{l,k,\sigma}\right)\right) = 0\text{,}
\end{equation*}
if $k \in \N_\ast^d$, by replacing $\omega_{N,M}$ with the projection on $\operatorname{span}\{ e_n : n \in I_k\}$. In fact, the proof is somewhat simpler, as in fact it takes place on the finite dimensional $\operatorname{span}\{ e_n : n \in I_k\}$ and does not require Theorem (\ref{qt-commutator-thm}).

We observe that the use of a bridge of the form $(\B(\ell^2(\Z^d)),\omega,\pi,\rho)$ with $\omega$ a compact operator was essential to take advantage of the strong-operator-topology continuity given by the field of quantum and fuzzy tori. This illustrates the potential to apply the quantum propinquity to a large class of examples. The study of the topological and metric properties of the quantum Gro\-mov-Haus\-dorff propinquity and of its applications to continuous fields and other examples will be the matter of an upcoming paper.



\bibliographystyle{amsplain}
\bibliography{../thesis}

\vfill
\end{document}